\title{CAT(0) and cubulated Shephard groups}
\author{Katherine M. Goldman}
\newtheorem{thm}{Theorem}[section]
\newtheorem{prop}[thm]{Proposition}
\newtheorem{lemma}[thm]{Lemma}
\newtheorem{cor}[thm]{Corollary}
\theoremstyle{definition}
\newtheorem{defn}[thm]{Definition}
\newtheorem{ex}[thm]{Example}
\newtheorem*{remark}{Remark}
\newtheorem*{cor*}{Corollary}
\numberwithin{equation}{section}
\newcommand{\tempname}{CCCC}
\newcommand{\bigast}{\mathop{\scalebox{1.8}{\raisebox{-0.2ex}{$\ast$}}}}
\begin{document}

\begin{abstract}
    Shephard groups are common generalizations of Coxeter groups, Artin groups, and graph products of cyclic groups. Their definition is similar to that of a Coxeter group, but generators may have arbitrary order rather than strictly order 2. 
    We extend a well known result that Coxeter groups are $\mathrm{CAT}(0)$ to a class of Shephard groups that have ``enough'' finite parabolic subgroups. 
    We also show that in this setting, if the associated Coxeter group is type (FC), then the Shephard group acts properly and cocompactly on a $\mathrm{CAT}(0)$ cube complex.
    As part of our proof of the former result, we introduce a new criteria for a complex made of $A_3$ simplices to be $\mathrm{CAT}(1)$.
\end{abstract}

\maketitle
    
\section{Introduction}

The classical notion of a \emph{Shephard group} first arose in G.~C.~Shephard's thesis \cite{shep52}. In it, Shephard introduces the notion of a regular complex polytope and studies the symmetry groups of such an object, which have been termed Shephard groups after his work. It turns out that these symmetry groups are complex reflection groups (finite subgroups of $GL_n(\mathbb{C})$ generated by complex linear reflections) and have a ``Coxeter-like'' presentation. This group presentation can be encoded in a ``Coxeter-like'' diagram, with certain restrictions on labels and shape of the diagram. In this paper, we study those groups defined by presentations identical to those of the classical Shephard groups, but without these same restrictions on the diagrams. Their precise definition is as follows.

Let $\Gamma$ be a simplicial graph with vertex set $I$ and the following information. 
For each vertex $i \in I$, we assign a number $p_i \in \mathbb{Z}_{\geq 2} \cup \{\infty\}$. 
For each edge $\{i,j\}$ of $\Gamma$, we assign a number $m_{ij} = m_{ji} \in \mathbb{Z}_{\geq 3} \cup \{\infty\}$. 
If $\{i,j\}$ is not an edge, define $m_{ij} = 2$. 
If $m_{ij}$ is odd, then we require that $p_i = p_j$. 
Sometimes if $p_i = 2$ or $m_{ij} = 3$ we omit the label.
We say this data defines an \emph{extended Coxeter diagram}. 
Associated to such a diagram $\Gamma$, we define the \emph{Shephard group}
$G_\Gamma$ on generators $S = \{\,s_i:i \in I\,\}$ with the following presentation:
\begin{equation} \label{def:sheppres}
    G_\Gamma = \left\langle
    S \ \middle| \ \begin{matrix}
        \mathrm{prod}(s_i,s_j; m_{ij}) = \mathrm{prod}(s_j,s_i; m_{ij}) \\
        s_i^{p_i} = 1
    \end{matrix}
     \right\rangle,
\end{equation}
where $\mathrm{prod}(a,b; m)$ denotes $(ab)^{m/2}$ if $m$ is even and $(ab)^{(m-1)/2}a$ if $m$ is odd.
(When $p_i = \infty$ or $m_{ij} = \infty$, the corresponding relation is omitted.) This is why we require $p_i = p_j$ when $m_{ij}$ is odd; $s_i$ and $s_j$ are conjugate, and thus if $p_i \neq p_j$ the order of $s_i$ and $s_j$ would not necessarily be $p_i$ or $p_j$, respectively. Note that if $\Gamma$ is disconnected, then $G_\Gamma$ splits as a direct product over the connected components of $\Gamma$. 

We point out some special cases to show the power of this general definition.
\begin{enumerate}
    \item If all $p_i = 2$, then $G_\Gamma$ is a Coxeter group.
    \item If all $p_i = \infty$, then $G_\Gamma$ is an Artin group.
    \item If all $m_{ij} = 2$ or $\infty$ (in which case we may call $G_\Gamma$ ``right-angled''), then $G_\Gamma$ is a graph product of cyclic groups.
\end{enumerate}
This is, however, not simply an empty generalization to compile the above groups into one succinct class. Many interesting infinite Shephard groups (and their quotients) appear in various places in mathematics. For example, they arise naturally in algebraic geometry as objects closely related to moduli spaces of arrangements in $\mathbb{CP}^2$ \cite{kapovich1998representation}. 
We detail further examples in the following.

\begin{ex}
In certain cases, Shephard groups have (typically proper) quotients to infinite affine and hyperbolic complex reflection groups; a particularly interesting class of examples comes from the Shephard groups $G_\Gamma$ with diagram $\Gamma$ given in Figure \ref{fig:infinitetriangles}.

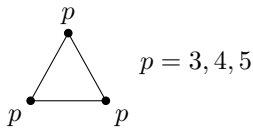
\begin{figure}[ht]
    \begin{tikzpicture}
        \filldraw (0,0) circle (0.05cm);
        \filldraw (1,0) circle (0.05cm);
        \filldraw (0.5,0.9) circle (0.05cm);
        \draw (0,0) -- (1,0) -- (0.5,0.9) -- (0,0);
        \node[above] at (0.5,0.9) {$p$};
        \node[below left] at (0,0) {$p$};
        \node[below right] at (1,0) {$p$};
        \node at (2.2,0.5) {$p = 3,4,5$};
    \end{tikzpicture}
    \caption{A class of infinite Shephard groups}
    \label{fig:infinitetriangles}
\end{figure}

When $p = 3$, $G_\Gamma$ has a quotient to one of Popov's \emph{affine complex reflection groups} \cite{popov22complex}. It is demonstrated in \cite{cote2021complex} that this quotient (refered to as \textsc{Refl}$(\widetilde{G_4})$ in said article) possesses a quite elegant geometry: the complement of its reflection hyperplanes deformation retracts to a non-positively curved 2-dimensional complex built out of $(3,3,3)$ triangles, whose link is a Mobius-Kantor graph. 
This link is isomorphic (and isometric) to the link of a complex we define for arbitrary Shephard groups---but this $(3,3,3)$ triangle complex for \textsc{Refl}$(\widetilde{G_4})$ is \emph{not} the same as our complex for $G_\Gamma$. (It may be interesting to further study the exact relationship between these two complexes.)

Further, when $p = 3,4$ or $5$, $G_\Gamma$ has quotients to the hyperbolic complex reflection groups upon which Mostow's famous examples of complex hyperbolic polyhedra in $\mathbb{CH}^2$ and non-arithmetic lattices of $\mathrm{PU}(2,1)$ are based \cite{mostow1980remarkable}.
\end{ex}

Another interesting pair of examples comes from the following diagram, which we call $B_n(p,q)$:
\[
\begin{tikzpicture}
    \filldraw (0,0) circle (0.05cm);
    \draw (0,0) -- node[above] {$4$} (1,0);
    \draw (1,0) -- (2.3,0);
    \filldraw (1,0) circle (0.05cm);
    \filldraw (2,0) circle (0.05cm);
    \node at (2.75,0) {. . .};
    \filldraw(3.5,0) circle (0.05cm);
    \draw (3.5,0) -- (3.25,0);
    \node at (0,-0.25) { $p$ };
    \node at (1,-0.25) { $q$ };
    \node at (2,-0.25) { $q$ };
    \node at (3.5,-0.25) { $q$ };
\end{tikzpicture}
\]
This family of Shephard groups finds use in the dual Garside theory of Artin groups. The dual Garside theory has shown to be quite effective in making progress on the $K(\pi,1)$ conjecture (e.g., in the recent proof of the conjecture for the affine-type Artin groups \cite{paolini2021proof}). 

\begin{ex} 
In \cite{baumeister2017simple}, it is shown that the Artin group of type $D_n$ embeds into the Shephard group of type $B_n(2,\infty)$ as a finite-index subgroup. 
This is used to classify the so-called \emph{Mikado braids} (certain well-behaved words) of the $D_n$ Artin group, and specifically, to show that the set of simple elements under its dual Garside structure are all Mikado braids. 
\end{ex}

\begin{ex}
In \cite{mccammond2017artin}, the Shephard group of type $B_n(\infty,2)$ is called the ``middle group'' \textsc{Mid}$(B_n)$, and is defined explicitly as a certain semidirect product of translations and reflections in $\mathbb{R}^n$. This group plays a key technical role in the proofs of the facts that, for a general affine-type Artin group, it is isomorphic to its dual Artin group, and this dual Artin group embeds in a Garside group. (It is these results that form part of the foundation of \cite{paolini2021proof}.)
\end{ex}

Much of our work relies on the close relationship of Shephard groups and Coxeter groups, and so we make the following definition. For any extended Coxeter diagram, we let $W_\Gamma$ and $A_\Gamma$ denote the Coxeter group and Artin group, resp., defined by the underlying Coxeter diagram of $\Gamma$ (that is, the Coxeter diagram obtained from $\Gamma$ by ignoring the vertex labels).
We recall that these groups have the following presentations:
\begin{align*}
    A_\Gamma &= \left\langle\
    S \ \middle| \ 
        \mathrm{prod}(s_i,s_j; m_{ij}) = \mathrm{prod}(s_j,s_i; m_{ij})\
     \right\rangle, \\
    W_\Gamma &= \left\langle\
    S \ \middle| \
        (s_is_j)^{m_{ij}} = 1,
        s_i^{2} = 1 \
     \right\rangle,
\end{align*}
If the diagram $\Gamma$ has all $p_i = 2$ (so $G_\Gamma = W_\Gamma$), we will sometimes call $\Gamma$ ``Coxeter'' (without the ``extended'') and otherwise call $\Gamma$ ``non-Coxeter''. But we emphasize that we declare $W_\Gamma$ is a Coxeter group regardless of the vertex labeling of $\Gamma$.

\subsection{Description of results}

We now give a broad overview of the results of this paper. 

We heavily use the fact that $G_\Gamma$ is finite for certain diagrams $\Gamma$.
If $G_\Gamma$ is a finite group, we sometimes call the diagram $\Gamma$ itself ``finite''.
It is an interesting fact that there are finite (abstract) Shephard groups $G_\Gamma$ which are not themselves Coxeter groups. 
See Table \ref{tab:finiteshephards} for examples.
The diagrams in this table come from the the well-known classification of finite complex reflection groups. 
However, it is a priori unclear if there is some diagram $\Gamma$, say, which is branched or a cycle, so that $G_\Gamma$ is a finite group under this abstract presentation, but does not correspond to any complex reflection group.
We confirm in Theorem \ref{thm:classifyfinite} that this cannot happen; that is, the only finite abstract Shephard groups $G_\Gamma$ are those arising from the complex reflection groups (specifically, those in Tables \ref{tab:finitediagrams} and \ref{tab:finiteshephards}).

We are able to classify the finite abstract Shephard groups, but the infinite Shephard groups remain rather mysterious. 
It is unclear if there is a unified topological interpretation of the Shephard groups, compared to the relationship between Coxeter groups and real reflection groups, and that between Artin groups and hyperplane complements. One of the main objectives of this article is to propose a geometric model for the infinite Shephard groups which, in many cases, can play the role of the Davis complex for a given Coxeter group. Similarly to the Davis complex, this model is built out of the finite ``parabolic'' subgroups of the Shephard group. To make this more precise, we make the following definitions.

For an extended Coxeter diagram $\Gamma$, 
a \emph{subdiagram} is a \textbf{full} subgraph $\Gamma'$ of $\Gamma$ inheriting the vertex and edge labels of $\Gamma$. (Recall that a subgraph is full if whenever two vertices of the subgraph are joined by an edge in the original graph, they remain joined by an edge in the subgraph.)
If $\Gamma'$ is a subdiagram of $\Gamma$, then $G_{\Gamma'}$ is also a Shephard group. 
The group $G_{\Gamma'}$ surjects onto the subgroup of $G_{\Gamma}$ generated by the vertex set of $\Gamma'$ viewed within $\Gamma$. To state this more precisely, and momentarily avoid the question of whether this is an isomorphism or not, we introduce the following notation: let $S = \{\,s_i:i \in I\,\}$ denote the generating set of $G_\Gamma$, and let $J \subseteq I$ with $T = \{\,s_j:j \in J\,\} \subseteq S$. Then define $G_T = \langle T \rangle$ to be the subgroup of $G_\Gamma$ generated by $T$, and define $G_{\Gamma(T)}$ to be the Shephard group defined by $\Gamma(T)$, the full subgraph of $\Gamma$ on vertex set $J$.

We now describe the main complex on which the Shephard groups act. Consider the following posets.
\begin{align*}
    \mathcal S^f_\Gamma    = \mathcal S^f    &= \{\,T \subseteq S : W_{\Gamma(T)} \text{ is finite}\,\}, \\
    \mathcal S^{fs}_\Gamma = \mathcal S^{fs} &= \{\,T \subseteq S : G_{\Gamma(T)} \text{ is finite}\,\}.
\end{align*}
The set $\mathcal S^f$ plays a major role in the study of Coxeter groups as the basis of the fundamental domain of the Davis complex. Its direct analogue in a Shephard group is $\mathcal S^{fs}$. We continue the analogy with Coxeter groups to define a complex (in Definition \ref{def:complexofshephard}), which we denote $\Theta = \Theta_\Gamma$, on which $G_\Gamma$ acts properly and cocompactly.

It would be too much to hope that this complex behaves nicely for all Shephard groups; after all, $\Gamma$ could have all vertex labels $\infty$, in which case $\mathcal S^{fs}$ would be empty. But with certain restrictions on the number and type of finite subgroups, we can show that $\Theta$ has a very nice geometry, and is, in fact, $\mathrm{CAT}(0)$. Our first step in this direction is the following.

\begin{restatable}{theorem}{moussongcat0}
\label{thm:moussong}
    Suppose $\Gamma$ is an extended Coxeter diagram with $\mathcal S^{f}_\Gamma = \mathcal S^{fs}_\Gamma$ such that $\Gamma$ has no subdiagram of type ``$A_4(3)$'' (see Table \ref{tab:finiteshephards}). Then $\Theta_\Gamma$ is $\mathrm{CAT}(0)$, and hence $G_\Gamma$ is $\mathrm{CAT}(0)$ (i.e., acts properly and cocompactly on a $\mathrm{CAT}(0)$ space by isometries).
\end{restatable}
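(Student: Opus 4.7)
The plan is to imitate Moussong's proof that the Davis complex of a Coxeter group is $\mathrm{CAT}(0)$. By construction (Definition \ref{def:complexofshephard}), $\Theta_\Gamma$ is a piecewise Euclidean cell complex carrying a proper and cocompact $G_\Gamma$-action, so by Bridson's version of the Cartan--Hadamard theorem it suffices to show $\Theta_\Gamma$ is (i) simply connected and (ii) locally $\mathrm{CAT}(0)$, the latter being equivalent (by Gromov's link condition) to every vertex link being $\mathrm{CAT}(1)$.

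For simple connectivity, I would lean on the hypothesis $\mathcal{S}^f_\Gamma = \mathcal{S}^{fs}_\Gamma$, which forces the combinatorial cell poset of $\Theta_\Gamma$ to coincide with that governing the Davis complex of $W_\Gamma$. Every defining relation of $G_\Gamma$ --- a braid relation or a relation $s_i^{p_i}=1$ --- already holds inside the finite parabolic generated by the letters it involves, and each such index set lies in $\mathcal{S}^{fs}$ by hypothesis. Since the cell of $\Theta_\Gamma$ indexed by such a $T$ is a fundamental domain for the finite group $G_T$ (hence contractible), a standard Cayley-$2$-complex / nerve-of-cover argument shows that any loop in $\Theta_\Gamma$ decomposes as a concatenation of null-homotopic loops inside these cells.

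The technical content lies in verifying the link condition. Each vertex link in $\Theta_\Gamma$ is a piecewise spherical simplicial complex whose simplices are indexed by chains in $\mathcal{S}^{fs}$ and modeled on spherical simplices associated to the finite parabolic Shephards $G_T$, with edge lengths and dihedral angles determined jointly by the $m_{ij}$ and the vertex labels $p_i$. For sub-simplices involving only order-$2$ generators --- those coming from finite Coxeter parabolics --- Moussong's classical lemma supplies the $\mathrm{CAT}(1)$ estimate directly. The genuinely new phenomenon in the Shephard setting is that non-Coxeter finite Shephards contribute spherical simplices whose angle data depends nontrivially on the $p_i$, and which do not fit inside Moussong's framework.

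The main obstacle, and the reason a new criterion is needed, is handling the spherical $A_3$-type simplices arising from $A_3(p)$ subdiagrams with $p \geq 3$. These cannot be certified by Moussong's original lemma, and must be treated by the new $\mathrm{CAT}(1)$ criterion for complexes built of $A_3$ simplices advertised in the abstract. The exclusion of $A_4(3)$ subdiagrams then appears as a technical constraint forced by the range of this criterion: such a subdiagram contributes a higher-dimensional spherical piece in some link which the criterion cannot handle, so it must be ruled out of the hypotheses. Once these cases are addressed, one assembles the pieces to conclude $\mathrm{CAT}(1)$ for every link, and the theorem follows by Cartan--Hadamard.
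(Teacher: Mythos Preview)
Your overall architecture is right and matches the paper: reduce to the link condition via the complex-of-groups machinery, then verify that every vertex link is $\mathrm{CAT}(1)$. But you have misidentified what is actually new, and this leaves the real work undone.

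The Moussong metric on $K_\Gamma$ is defined using Coxeter blocks for $W_{T}$; the spherical simplex $\Delta_T$ has edge length $\pi - \pi/m_{ij}$, which depends \emph{only} on the $m_{ij}$ and not at all on the $p_i$. So the individual simplices in a vertex link have exactly the same shapes as in the Coxeter case, and Moussong's lemma requires no modification on that front. What changes is the \emph{combinatorics} of the link. The paper shows (Proposition~\ref{prop:linkdecomp}) that the link of $v_T$ in the local development splits as a spherical join
\[
lk(v_T, F_T) \, * \, \widehat\Theta_{\Gamma(T)},
\]
and the first factor is handled verbatim by the Charney--Davis argument for Coxeter groups. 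The entire new content is proving that the second factor, the Milnor fiber complex $\widehat\Theta_{\Gamma(T)}$ of the finite Shephard group $G_{\Gamma(T)}$, is $\mathrm{CAT}(1)$. This complex is built from standard Coxeter-shape simplices but is \emph{not} a round sphere when $\Gamma(T)$ is non-Coxeter --- it has more cells than the Coxeter complex (e.g.\ $\widehat\Theta(I_2(3,3,3))$ is the M\"obius--Kantor graph, not a hexagon).

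Your proposal therefore has a genuine gap: you never engage with showing $\widehat\Theta_{\Gamma(T)}$ is $\mathrm{CAT}(1)$, and this is done in the paper by a case-by-case analysis over the classification of finite non-Coxeter Shephard groups (Lemma~\ref{lem:Gcat1}). The dihedral groups $I_2(p,m,q)$ are handled by a girth bound, $B_n(p,2)$ by identifying $\widehat\Theta$ with the barycentric subdivision of an $n$-fold spherical join of $p$ points, $B_3(2,3)$ by recognizing it as a subdivision of the $A_3(3)$ complex, and $A_3(3)$ itself via the new $\mathrm{CAT}(1)$ criterion for marked $A_3$ complexes. (Note also that ``$A_3(p)$ for $p\geq 3$'' is not the issue: only $p=3$ yields a finite group, so only $A_3(3)$ and $A_4(3)$ ever arise.) The exclusion of $A_4(3)$ is precisely because $\widehat\Theta(A_4(3))$ is $3$-dimensional and the $2$-dimensional criterion cannot reach it.
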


Some examples of extended Coxeter diagrams satisfying the hypothesis of Theorem \ref{thm:moussong} can be found in Table \ref{tab:excat0}.
Among the other nice properties of $\mathrm{CAT}(0)$ groups, notable consequences of Theorem \ref{thm:moussong} for the applicable $G_\Gamma$ are:

\begin{cor*}
    Suppose $\Gamma$ is an extended Coxeter diagram with $\mathcal S^{f}_\Gamma = \mathcal S^{fs}_\Gamma$ which has no subdiagram of type $A_4(3)$. Then the word problem and congugacy problem for $G_\Gamma$ are solvable, and $G_\Gamma$ has quadratic Dehn function.
\end{cor*}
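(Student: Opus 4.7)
The plan is to deduce each conclusion from the $\mathrm{CAT}(0)$ property supplied by Theorem~\ref{thm:moussong} by invoking standard consequences of a proper, cocompact isometric action on a $\mathrm{CAT}(0)$ space. Under the given hypotheses, Theorem~\ref{thm:moussong} provides such an action of $G_\Gamma$ on the complete, finite-dimensional, piecewise Euclidean complex $\Theta_\Gamma$; in particular, $G_\Gamma$ is a $\mathrm{CAT}(0)$ group in the usual geometric group-theoretic sense.

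From here, each of the three conclusions is essentially a quotation from the standard theory (see, e.g., Bridson--Haefliger, Part~III). First, any group acting properly and cocompactly by isometries on a $\mathrm{CAT}(0)$ space has a quadratic Dehn function: one bounds the combinatorial area of a word-loop in the Cayley graph by the area of a geodesic disc in $\Theta_\Gamma$, using convexity of the $\mathrm{CAT}(0)$ metric together with a fixed compact fundamental domain to compare cellular and continuous fillings. Second, solvability of the word problem follows at once from having a computable (quadratic) isoperimetric function: to test whether a word $w$ represents the identity, enumerate all products of at most $C|w|^2$ conjugates of the defining relators of $G_\Gamma$ and check whether any equals $w$. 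Third, solvability of the conjugacy problem follows from the fact that groups admitting a proper cocompact action on a $\mathrm{CAT}(0)$ space are semihyperbolic in the sense of Alonso--Bridson, and semihyperbolic groups have solvable conjugacy problem.

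The main obstacle has already been overcome in Theorem~\ref{thm:moussong}, which supplies the $\mathrm{CAT}(0)$ geometry and verifies that the action is proper and cocompact; the only task remaining in the corollary is to observe that $\Theta_\Gamma$ is a \emph{complete} geodesic space (it is a finite-dimensional piecewise Euclidean complex with only finitely many isometry types of cells, so completeness is automatic) and then cite the general theory. No new difficulty arises at this step.
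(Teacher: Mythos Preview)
Your proposal is correct and matches the paper's approach exactly: the paper does not give a separate proof of this corollary but simply states it as a ``notable consequence'' of Theorem~\ref{thm:moussong}, relying on the standard facts about $\mathrm{CAT}(0)$ groups that you cite. If anything, you have supplied more detail than the paper itself.
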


Along the way, we extract the following combinatorial criteria of Charney introduced in \cite{charney2004deligne} (based on \cite{elder2002curvature}) to show that a specific 2-dimensional piecewise spherical complex is $\mathrm{CAT}(1)$. See Definition \ref{def:cccc} and Theorem \ref{thm:charneycat1} for more detailed descriptions.

\begin{restatable}{theorem}{firstCharney}
\label{thm:firstCharney}
    Suppose $\Psi$ is a marked $A_3$ simplicial complex.
    If $\Psi$ is locally $\mathrm{CAT}(1)$ with the link of the $\pi/2$ vertices complete bipartite, and if the 4-cycles and 6-cycles in the 1-skeleton of $\Psi$ seen in Figure \ref{fig:shortclosededge} can be ``filled in'' to the subcomplexes in Figure \ref{fig:fillededge}, then $\Psi$ is $\mathrm{CAT}(1)$. 
\end{restatable}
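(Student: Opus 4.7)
The plan is to invoke the standard fact that a $2$-dimensional piecewise spherical complex is $\mathrm{CAT}(1)$ if and only if it is locally $\mathrm{CAT}(1)$ and contains no isometrically embedded closed geodesic of length strictly less than $2\pi$. Since local $\mathrm{CAT}(1)$ is explicitly assumed, the task reduces to ruling out short closed geodesics in $\Psi$.

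First I would record the metric data of the $A_3$ simplex: the three vertex angles are $\pi/3$, $\pi/3$, $\pi/2$, and spherical trigonometry determines the edge lengths and the permissible turning angles at which a geodesic may enter or leave a simplex through an edge or a vertex. At a $\pi/2$ vertex, the complete bipartite link hypothesis forces every embedded cycle in that link to have length exactly $2\pi$, which is the saturated case of the link condition; any local geodesic passing through such a vertex must do so with angle sum exactly $\pi$ on each side, following a specific combinatorial path in the link. Now suppose for contradiction that $\gamma$ is a closed local geodesic of length $L<2\pi$. I would encode $\gamma$ by the sequence of $A_3$ triangles traversed, the edges crossed, and the vertices passed through on each side, and then perform a case analysis, bounded by the length constraint and by the link data at both $\pi/2$ and $\pi/3$ vertices, to show that the only minimal combinatorial configurations are the $4$-cycle and $6$-cycle depicted in Figure \ref{fig:shortclosededge}.

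With the candidates identified, I would invoke the fill-in hypothesis: each offending short cycle bounds one of the subcomplexes shown in Figure \ref{fig:fillededge}, a disk built from $A_3$ triangles of prescribed combinatorial type. Inside such a disk, spherical trigonometry produces a chord strictly shorter than the bounding cycle, so $\gamma$ could be homotoped across the disk to a strictly shorter closed curve, contradicting its status as a local geodesic. This contradiction completes the proof. The main obstacle, I expect, is the combinatorial classification step: configurations in which $\gamma$ passes through many $\pi/2$ vertices with saturated angle sums, or mixes interior crossings with edge travel, can in principle look intricate, and one must carefully eliminate everything other than the two identified families before the fill-in condition can be applied.
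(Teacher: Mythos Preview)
Your outline has the right skeleton but contains two genuine gaps.

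First, your contradiction step is invalid as stated. You write that the filled disk ``produces a chord strictly shorter than the bounding cycle, so $\gamma$ could be homotoped across the disk to a strictly shorter closed curve, contradicting its status as a local geodesic.'' But a local geodesic is not characterized by being shortest in its homotopy class; homotoping it to something shorter is no contradiction. What the filling actually buys you is that at an $\hat a$ or $\hat c$ vertex of the cycle, the cone vertex of the filled disk contributes enough angle in the link that the incoming and outgoing edges subtend angle $<\pi$, so the cycle was never locally geodesic to begin with. This is what the paper does, and it is what you should do.

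Second, and more seriously, your case analysis only addresses edge paths. A closed geodesic in $\Psi$ may cross interiors of $2$-simplices or pass through $\hat a$/$\hat c$ vertices without following edges, and these do \emph{not} reduce directly to the $4$- and $6$-cycles of Figure~\ref{fig:shortclosededge}. The paper handles this by \emph{developing} such a geodesic onto the $A_3$ Coxeter complex $\widehat\Sigma(A_3)$, which is a round $2$-sphere: a geodesic avoiding $\hat a$ and $\hat c$ vertices is analyzed via how it ``traverses'' or ``cuts corners'' of successive $\hat b$-stars, and the count of such stars is bounded below using an auxiliary lemma (stars of distinct $\hat b$-vertices meet in at most one edge) that itself requires the filling hypothesis to prove. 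A geodesic that does hit an $\hat a$ or $\hat c$ vertex but is not an edge path is handled by a \emph{rotation} trick: develop a non-edge segment to the sphere, rotate it rel endpoints until it becomes an edge path of the same length, and lift back, reducing to the edge-path case. Your proposal does not mention development, the star-intersection lemma, or the rotation; without these, the classification step you describe cannot be carried out. You also miss a third family of short edge paths (type (iii) in the paper, a $4$-cycle with a $\hat b$ vertex) whose filling is not assumed but must be \emph{derived} from the given hypotheses via the star-intersection lemma.
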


Our second result is comparable to \cite[Thm.~4.3.5]{charney1995k}. There are certain Shephard groups where $\Theta_\Gamma$ has a natural cubical structure:

\begin{restatable}{theorem}{cubecat0}
\label{thm:cube}
    Suppose $\Gamma$ is an extended Coxeter diagram so that the underlying Coxeter diagram is type \emph{(FC)} and $\mathcal S^f_\Gamma = \mathcal S^{fs}_\Gamma$. Then $\Theta_\Gamma$ is a $\mathrm{CAT}(0)$ cube complex, and hence $G_\Gamma$ is (cocompactly) cubulated.
\end{restatable}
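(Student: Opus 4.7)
The plan is to establish a natural cube complex structure on $\Theta_\Gamma$ under the stated hypotheses and then verify non-positive curvature via Gromov's link condition.

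First, I would analyze the poset $\mathcal S^{fs}_\Gamma$. The FC hypothesis on the underlying Coxeter diagram says that every subset $T \subseteq S$ all of whose pairs satisfy $m_{ij} < \infty$ has $W_T$ finite; conversely, any $T \in \mathcal S^f_\Gamma$ forces $m_{ij} < \infty$ for every $i,j \in T$ (otherwise $W_T$ contains an infinite dihedral subgroup). Thus $\mathcal S^f_\Gamma$ is exactly the set of cliques in the subgraph of $\Gamma$ consisting of edges with finite labels, and is therefore a flag poset under inclusion. Invoking $\mathcal S^f_\Gamma = \mathcal S^{fs}_\Gamma$, the same flag characterization passes to $\mathcal S^{fs}_\Gamma$: a subset $T$ lies in $\mathcal S^{fs}_\Gamma$ if and only if every pair $\{i,j\} \subseteq T$ does.

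Next, I would examine the cells of $\Theta_\Gamma$ as produced from $\mathcal S^{fs}_\Gamma$ by the construction in Definition~\ref{def:complexofshephard}. The key claim is that in the present setting, each cell is a Euclidean cube: for $T \in \mathcal S^{fs}_\Gamma$ with $|T|=k$, the associated cell is $[0,1]^T$, with its $2^k$ vertices corresponding to subsets of $T$ and its facets corresponding to proper inclusions $T' \subsetneq T$. Showing that this cubical identification is internally consistent across cells -- so that the face relations in $\Theta_\Gamma$ match the cubical face relations -- is the technical heart of the argument, and will rest on the explicit description of $\Theta_\Gamma$ together with the flag structure of $\mathcal S^{fs}_\Gamma$ established above.

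With a cube complex structure in hand, I would invoke Gromov's criterion: a simply-connected cube complex is $\mathrm{CAT}(0)$ if and only if the link of every vertex is a flag simplicial complex. Simple connectivity of $\Theta_\Gamma$ should follow from the developability and contractibility-of-fundamental-domain arguments underlying the Davis-style construction. For the link condition at a vertex $v$, the edges incident to $v$ correspond to generators that extend $v$'s local spherical subset to another element of $\mathcal S^{fs}_\Gamma$, and a collection of such edges spans a cube at $v$ precisely when the full extension again lies in $\mathcal S^{fs}_\Gamma$. The flag structure of $\mathcal S^{fs}_\Gamma$ then guarantees that pairwise spanning of squares implies full spanning of a cube, so every vertex link is a flag simplicial complex, as required.

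The main obstacle will be establishing the cube structure itself. For a general finite Shephard group $G_T$, the natural Davis-type cell is a Coxeter-like polytope (a $2m$-gon in the rank-two dihedral case, for instance), not a cube. The work is in showing that under FC together with $\mathcal S^f = \mathcal S^{fs}$, the construction of $\Theta_\Gamma$ yields cells that are genuine Euclidean cubes whose side lengths and gluings are compatible on overlaps. Once this is done, simple connectivity and the flag link condition combine via Gromov's theorem to give $\mathrm{CAT}(0)$-ness; properness and cocompactness of the $G_\Gamma$-action are automatic from the construction of $\Theta_\Gamma$, yielding the desired cubulation of $G_\Gamma$.
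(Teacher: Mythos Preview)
Your outline captures the upper-link half of the argument but misses the lower-link half, and this is where the Shephard-specific content lives.  In the paper's setup (Proposition~\ref{prop:linkdecomp}), the link of a vertex $v_T$ in the local development of $\Theta_\Gamma$ is the spherical join
\[
lk(v_T,F_T)\ *\ \widehat\Theta_{\Gamma(T)}.
\]
Under the cubical metric both factors are all-right simplicial complexes, so by Gromov's lemma each must be shown to be flag.  Your argument---that $\mathcal S^{fs}_\Gamma$ is a flag poset under (FC) and $\mathcal S^f=\mathcal S^{fs}$, hence links of vertices are flag---handles only the first factor $lk(v_T,F_T)$ (which is what you describe when you talk about ``generators that extend $v$'s local spherical subset'').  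It says nothing about $\widehat\Theta_{\Gamma(T)}$, which is the derived complex of proper faces of the regular complex polytope for the finite Shephard group $G_{\Gamma(T)}$.  This complex is not visibly flag from the combinatorics of $\Gamma$ alone.

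The paper's proof of this second flag condition is short but uses a genuinely geometric fact: vertices of $\widehat\Theta_{\Gamma(T)}$ are affine subspaces of $\mathbb C^n$ (the faces of the polytope), and a collection of affine subspaces that are pairwise nested must be linearly nested, hence span a simplex.  Your remark that ``for a general finite Shephard group $G_T$, the natural Davis-type cell is a Coxeter-like polytope\dots not a cube'' suggests you are anticipating trouble in exactly this spot, but the resolution is not to prove the cell is a cube (the dual cells $F_T^*$ are always combinatorial cubes by construction); it is to verify that the lower link $\widehat\Theta_{\Gamma(T)}$ is flag via the polytope interpretation.  Without that step, Gromov's criterion cannot be applied and the proof does not close.
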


\begin{table}[p]
    \centering

    \vspace{2em}

    \begin{tikzpicture}
        \filldraw (0,0) circle (0.05cm);
        \filldraw (1,0) circle (0.05cm);
        \filldraw (2,0) circle (0.05cm);
        \filldraw (4,0) circle (0.05cm);
        \filldraw (5,0) circle (0.05cm);
        \filldraw (6,0) circle (0.05cm);
        \draw (0,0) -- node[above] {$4$} (1,0);
        \draw (1,0) -- (5,0);
        \draw (5,0) -- node[above] {$4$} (6,0);
        \node[fill=white,circle] (dots) at (3,0) {$\cdots$};
        \node[below=0.1cm] at (0,0) {$p$};
        \node[below=0.1cm] at (6,0) {$q$};
        \node at (7.5,0) {$p,q \in \mathbb{Z}_{\geq 2}$};
    \end{tikzpicture}
    \vspace{1em}

    \begin{tikzpicture}
        \filldraw (0,0) circle (0.05cm);
        \filldraw (1,0) circle (0.05cm);
        \filldraw (2,0) circle (0.05cm);
        \filldraw (4,0) circle (0.05cm);
        \filldraw (5,0) circle (0.05cm);
        \filldraw (6,0.5) circle (0.05cm);
        \filldraw (6,-0.5) circle (0.05cm);
        \draw (0,0) -- node[above] {$4$} (1,0);
        \draw (1,0) -- (5,0);
        \draw (5,0) -- (6,0.5);
        \draw (5,0) -- (6,-0.5);
        \node[fill=white,circle] (dots) at (3,0) {$\cdots$};
        \node[below=0.1cm] at (0,0) {$p$};
        \node at (7.5,0) {$p \in \mathbb{Z}_{\geq 2}$};
    \end{tikzpicture}
    \vspace{1em}

    \begin{tikzpicture}
        \filldraw (0,0) circle (0.05cm);
        \filldraw (1,0) circle (0.05cm);
        \filldraw (-1,0) circle (0.05cm);
        \draw (0,0) --node[above] {$4$} (1,0);
        \draw (0,0) --node[above] {$4$} (-1,0);
        \node[below] at (0,0) {$p$};
        \filldraw ( 2,0) circle (0.05cm);
        \filldraw (-2,0) circle (0.05cm);
        \draw (2,0) -- (1,0);
        \draw (-2,0) -- (-1,0);
        \filldraw ( 3.5,0) circle (0.05cm);
        \filldraw (-3.5,0) circle (0.05cm);
        \draw (2,0) -- (3.5,0);
        \draw (-2,0) -- (-3.5,0);
        \node[fill=white,circle] (dots) at (2.75,0) {$\cdots$};
        \node[fill=white,circle] (dots) at (-2.75,0) {$\cdots$};
        \node at (4.8,0) {$p \in \mathbb{Z}_{\geq 2}$};
    \end{tikzpicture}
    \begin{tikzpicture}
        \matrix[column sep=20pt,row sep=6pt, nodes={anchor=center, minimum height=0.1cm, align=flush center}]{
        \filldraw (0,0) circle (0.05cm);
        \filldraw (1,0) circle (0.05cm);
        \filldraw (0.5,0.9) circle (0.05cm);
        \draw (0,0) -- (1,0) -- (0.5,0.9) -- (0,0);
        \node[above] at (0.5,0.9) {$p$};
        \node[below left] at (0,0) {$p$};
        \node[below right] at (1,0) {$p$};
        \node at (2.2,0.5) {$p = 2,3,4,5$};
        & 
        \filldraw (0,0) circle (0.05cm);
        \filldraw (1,0) circle (0.05cm);
        \filldraw (0.5,0.9) circle (0.05cm);
        \draw (0,0) --node[below] {$4$} (1,0) -- (0.5,0.9) -- (0,0);
        \node[above] at (0.5,0.9) {$p$};
        \node[below left] at (0,0) {$p$};
        \node[below right] at (1,0) {$p$};
        \node at (2.2,0.5) {$p = 2,3,4$};
        &
        \filldraw (0,0) circle (0.05cm);
        \filldraw (1,0) circle (0.05cm);
        \filldraw (0.5,0.9) circle (0.05cm);
        \draw (0,0) -- (1,0) --node[right] {$4$} (0.5,0.9) --node[left] {$4$} (0,0);
        \node[above] at (0.5,0.9) {$p$};

        \node at (2.1,0.5) {$p \in \mathbb{Z}_{\geq 2}$};
        \\
        };
    \end{tikzpicture}
    \begin{tikzpicture}
        \matrix[ column sep=25pt,row sep=6pt, nodes={anchor=center, minimum height=0.1cm, align=flush center}]{
        \filldraw (0,0) circle (0.05cm);
        \filldraw (1,0) circle (0.05cm);
        \filldraw (2,0) circle (0.05cm);
        \filldraw (3,0) circle (0.05cm);
        \draw (0,0) -- node[above] {$4$} (1,0);
        \draw (1,0) -- (2,0);
        \draw (2,0) -- node[above] {$4$} (3,0);
        
        \node[below=0.1cm] at (1,0) {$3$};  
        \node[below=0.1cm] at (2,0) {$3$};
        
        &
        
        \filldraw (1,0) circle (0.05cm);
        \filldraw (2,0) circle (0.05cm);
        \filldraw (3,0) circle (0.05cm);
        
        \draw (1,0) --node[above] {$5$} (2,0);
        \draw (2,0) --node[above] {$4$}  (3,0);
        
        \node[below] at (1,0) {$3$};
        \node[below] at (2,0) {$3$};
        \node[below] at (3,0) {$5$};
        &
        \filldraw (0,0) circle (0.05cm);
        \filldraw (1,0) circle (0.05cm);
        \filldraw (2,0) circle (0.05cm);
        \filldraw (3,0) circle (0.05cm);
        \filldraw (4,0) circle (0.05cm);
        \draw (0,0) -- (2,0);
        \draw (2,0) -- node[above] {$6$} (3,0);
        \draw (3,0) -- node[above] {$4$} (4,0);
        \node[below] at (3,0) {$5$};
        \node[below] at (4,0) {$3$};
        \\
        };
    \end{tikzpicture}
    \begin{tikzpicture}
        \matrix[matrix of nodes, column sep=30pt,row sep=6pt, nodes={anchor=center, minimum height=0.1cm, align=flush center}]{
        \begin{tikzpicture}
        \filldraw (0,0) circle (0.05cm);
        \filldraw (1,0) circle (0.05cm);
        \filldraw (2,0) circle (0.05cm);
        \draw (0,0) --node[above] {$5$} (1,0); 
        \draw (1,0) --node[above] {$5$} (2,0);
        
        \node[below] at (0,0) {$3$};
        \node[below] at (1,0) {$3$};
        \node[below] at (2,0) {$3$};
        \end{tikzpicture}
        &
        \begin{tikzpicture}
        \filldraw (0,0) circle (0.05cm);
        \filldraw (1,0) circle (0.05cm);
        \filldraw (0,1) circle (0.05cm);
        \filldraw (1,1) circle (0.05cm);
        \draw (0,0) --(1,0);
        \draw (0,1) --(1,1);
        \draw (0,0) --(0,1);
        \draw (1,0) --(1,1);
        \node[above left=0.001cm] at (0,1) {$3$};
        \node[below left=0.001cm] at (0,0) {$3$};
        \node[above right=0.001cm] at (1,1) {$3$};
        \node[below right=0.001cm] at (1,0) {$3$};
        
        \end{tikzpicture} 
        &
        \begin{tikzpicture}
        \filldraw (0,0) circle (0.05cm);
        \filldraw (1,0) circle (0.05cm);
        \filldraw (0,1) circle (0.05cm);
        \filldraw (1,1) circle (0.05cm);
        \draw (0,0) -- node[below] {$4$} (1,0);
        \draw (0,1) -- node[above] {$4$} (1,1);
        \draw (0,0) -- (0,1);
        \draw (1,0) -- (1,1);
        \node[above left=0.001cm] at (0,1) {$3$};
        \node[below left=0.001cm] at (0,0) {$3$};
        
        \end{tikzpicture} 
        &
        \begin{tikzpicture}
        \filldraw (0,0) circle (0.05cm);
        \filldraw (1,0) circle (0.05cm);
        \filldraw (-0.7,0.7) circle (0.05cm);
        \filldraw (-0.7,-0.7) circle (0.05cm);
        \draw (0,0) -- (-0.7,0.7);
        \draw (0,0) -- (-0.7,-0.7);
        \draw (0,0) --node[above] {$4$} (1,0);
        \node[below] at (0,0) {$3$};
        \node[below] at (-0.7,-0.7) {$3$};
        
        \node[above] at (-0.7,0.7) {$3$};
        
        \end{tikzpicture}
        \\
        };
    \end{tikzpicture}
    \caption{Examples of $\mathrm{CAT}(0)$ Shephard groups}
    \label{tab:excat0}
\end{table}

\begin{table}[p]
    \centering

    \vspace{3em}
    \begin{tikzpicture}
        \filldraw (0,0) circle (0.05cm);
        \filldraw (1,0) circle (0.05cm);
        \filldraw (2,0) circle (0.05cm);
        \filldraw (4,0) circle (0.05cm);
        \filldraw (5,0) circle (0.05cm);
        \filldraw (6,0) circle (0.05cm);
        \draw (0,0) -- node[above] {$\infty$} (1,0);
        \draw (1,0) -- (5,0);
        \draw (5,0) -- node[above] {$4$} (6,0);
        \node[fill=white,circle] (dots) at (3,0) {$\cdots$};
        \node[below=0.1cm] at (0,0) {$p$};
        \node[below=0.1cm] at (6,0) {$q$};
        \node at (7.5,0) {$p,q \in \mathbb{Z}_{\geq 2}$};
    \end{tikzpicture}

    \vspace{1em}

    \begin{tikzpicture}
        \filldraw (0,0) circle (0.05cm);
        \filldraw (1,0) circle (0.05cm);
        \filldraw (2,0) circle (0.05cm);
        \filldraw (4,0) circle (0.05cm);
        \filldraw (5,0) circle (0.05cm);
        \filldraw (6,0.5) circle (0.05cm);
        \filldraw (6,-0.5) circle (0.05cm);
        \draw (0,0) -- node[above] {$\infty$} (1,0);
        \draw (1,0) -- (5,0);
        \draw (5,0) -- (6,0.5);
        \draw (5,0) -- (6,-0.5);
        \node[fill=white,circle] (dots) at (3,0) {$\cdots$};
        \node[below=0.1cm] at (0,0) {$p$};
        \node at (7.5,0) {$p \in \mathbb{Z}_{\geq 2}$};
    \end{tikzpicture}

    \vspace{1em}

    \begin{tikzpicture}
        \filldraw (0,0) circle (0.05cm);
        \filldraw (1,0) circle (0.05cm);
        \filldraw (-1,0) circle (0.05cm);
        \draw (0,0) --node[above] {$4$} (1,0);
        \draw (0,0) --node[above] {$\infty$} (-1,0);
        \node[below] at (-1,0) {$p$};
        \node[below] at (0,0) {$q$};
        \filldraw ( 2,0) circle (0.05cm);
        \filldraw (-2,0) circle (0.05cm);
        \filldraw (-3,0) circle (0.05cm);
        \draw (2,0) -- (1,0);
        \draw (-2,0) --node[above] {$4$} (-1,0);
        \filldraw ( 3.5,0) circle (0.05cm);
        \filldraw (-4.5,0) circle (0.05cm);
        \draw (2,0) -- (3.5,0);
        \draw (-2,0) -- (-4.5,0);
        \node[fill=white,circle] (dots) at (2.75,0) {$\cdots$};
        \node[fill=white,circle] (dots) at (-3.75,0) {$\cdots$};
        \node at (4.8,0) {$p,q \in \mathbb{Z}_{\geq 2}$};
    \end{tikzpicture}

    \begin{tikzpicture}
        \matrix[column sep = 20pt]{
        \filldraw (0,0) circle (0.05cm);
        \filldraw (1,0) circle (0.05cm);
        \filldraw (0.5,0.9) circle (0.05cm);
        \draw (0,0) --node[below] {$\infty$} (1,0) -- (0.5,0.9) -- (0,0);
        \node[above] at (0.5,0.9) {$p$};
        \node[below left] at (0,0) {$p$};
        \node[below right] at (1,0) {$p$};
        \node at (2.2,0.5) {$p = 2,3,4,5$};
    &
        \filldraw (0,0) circle (0.05cm);
        \filldraw (1,0) circle (0.05cm);
        \filldraw (0,1) circle (0.05cm);
        \filldraw (1,1) circle (0.05cm);
        \draw (0,0) --(1,0);
        \draw (0,1) --node[above] {$\infty$} (1,1);
        \draw (0,0) --(0,1);
        \draw (1,0) -- (1,1);
        \node[above left=0.001cm] at (0,1) {$3$};
        \node[below left=0.001cm] at (0,0) {$3$};
        \node[above right=0.001cm] at (1,1) {$3$};
        \node[below right=0.001cm] at (1,0) {$3$};
    &
        \filldraw (0,0) circle (0.05cm);
        \filldraw (1,0) circle (0.05cm);
        \filldraw (0,1) circle (0.05cm);
        \filldraw (1,1) circle (0.05cm);
        \draw (0,0) -- (1,0);
        \draw (0,1) --node[above] {$\infty$} (1,1);
        \draw (0,0) --node[left] {$4$} (0,1);
        \draw (1,0) -- (1,1);
        
        \node[below left=0.001cm] at (0,0) {$3$};
        \node[above right=0.001cm] at (1,1) {$3$};
        \node[below right=0.001cm] at (1,0) {$3$};
    &
        \filldraw (0,0) circle (0.05cm);
        \filldraw (1,0) circle (0.05cm);
        \filldraw (0,1) circle (0.05cm);
        \filldraw (1,1) circle (0.05cm);
        \draw (0,0) -- (1,0);
        \draw (0,1) --node[above] {$\infty$} (1,1);
        \draw (0,0) --node[left] {$4$} (0,1);
        \draw (1,0) -- (1,1);
        \node[above left=0.001cm] at (0,1) {$p$};
        \node at (2,0.5) {$p \in \mathbb{Z}_{\geq 2}$};
        \\
        };
    \end{tikzpicture}
    \caption{Examples of cubulated Shephard groups}
    \label{tab:excub}
\end{table}

Recall that a Coxeter diagram is type (FC) if it satisfies the following condition:
\begin{enumerate}
    \item[(FC)] A subdiagram $\Gamma'$ of $\Gamma$ generates a finite Coxeter group if and only if $\Gamma'$ has no edge labeled $\infty$.
\end{enumerate}
Thus an equivalent hypothesis for Theorem \ref{thm:cube} is ``$\Gamma$ is an extended Coxeter diagram such that a subdiagram $\Gamma'$ of $\Gamma$ generates a finite Shephard group $G_{\Gamma'}$ if and only if $\Gamma'$ has no edge labeled $\infty$.''

We observe that Theorem \ref{thm:cube} includes the right-angled Shephard groups (with finite vertex labels) as a special case. More examples can be found in Table \ref{tab:excub}.

The inspiration for the definition of $\Theta_\Gamma$ comes from \cite{charney1995k}, and the overarching themes are quite similar: after detailing the construction of the complexes for a given Shephard group and the metrics thereupon, we verify that links in these complexes are $\mathrm{CAT}(1)$. The metrics are defined identically to the Coxeter and Artin group cases---however, our technique for showing that these metrics are $\mathrm{CAT}(0)$ diverge greatly from \cite{charney1995k}, heavily utilizing the connection between the finite Shephard groups and regular complex polytopes.

\begin{remark}
    We would like to briefly note that there is some similarity between the construction and results presented here with a recent paper of Soergel \cite{soergel2022generalization} which studies a subclass of Shephard groups called \emph{Dyer groups}. These ideas were developed independently, and there are many interesting Shephard groups covered by one result but not the other (although there is some overlap). For example, in \cite{soergel2022generalization}, some Shephard groups with vertex labels $\infty$ are shown to be $\mathrm{CAT}(0)$, whereas in this paper it is required that all vertices have finite labels. But in contrast, we can treat many Shephard groups which have finite-label edges between vertices whose labels are not $2$, which is not covered by the results of \cite{soergel2022generalization}. Moreover, the proof techniques remain largely independent.
\end{remark}

\subsection{Organization of paper}

In Section \ref{sec:finiteshephard}, we establish the classification of finite Shephard groups and recall how they relate to the regular complex polytopes. 
In Section \ref{sec:complexes}, we describe the complexes for Shephard groups which generalize the Coxeter and Deligne complexes of \cite{charney1995k}.
Section \ref{sec:cubical} is dedicated to proving Theorem \ref{thm:cube}, which only uses the geometry and combinatorics of the regular complex polytopes. 
Sections \ref{sec:cat1crit} and \ref{sec:moussong} are dedicated to proving Theorem \ref{thm:moussong}; Section \ref{sec:cat1crit} focuses solely on proving Theorem \ref{thm:firstCharney}, while Section \ref{sec:moussong} completes the proof of Theorem \ref{thm:moussong}. As an aside, Appendix \ref{sec:hessiancomb} compiles the various facts about the complex polytope for the ``$A_3(3)$'' Shephard group which are used in the proof of Theorem \ref{thm:moussong}.

\subsection{Acknowledgements}

The author would like to thank Mike Davis and Jingyin Huang for their many helpful discussions, and the anonymous referee for their comments. 

\section{Finite Shephard groups}\label{sec:finiteshephard}

In this section, we collect relevant background information regarding the finite Shephard groups, in particular, their classification and connection to regular complex polytopes.

\subsection{The classification of finite Shephard groups}

As discussed in the introduction, the study of Shephard groups has tended to focus on their use as finite complex reflection groups. Of course, it is well known (and one of their defining features) that the (non-Coxeter) Shephard groups arising as complex reflection groups all have a presentation as in (\ref{def:sheppres}) with diagram found in Table \ref{tab:finiteshephards}. However,
if one considers an abstract group with presentation (\ref{def:sheppres}) and arbitrary diagram $\Gamma$, it is not clear if the converse holds; that is, if $G_\Gamma$ is finite, then $\Gamma$ appears in Tables \ref{tab:finitediagrams} or \ref{tab:finiteshephards}. It turns out that this is in fact true, although not entirely trivial to show.
To the author's knowledge, there is no published full proof of this fact (as previous work seems to only rely on the fact that the diagrams in Table \ref{tab:finiteshephards} form a subset of the finite Shephard groups), so, we include a brief section on the proof that the diagrams in Tables \ref{tab:finitediagrams} and \ref{tab:finiteshephards} form the entire class of finite Shephard groups.

The following definition is adapted from \cite{mostow1980remarkable}, who adapted it from the Hermitian forms typically used in the study of classical Shephard groups.

\begin{defn}
    Let $\Gamma$ be a Shephard group on vertex set $I$ with all $p_i$ finite. For an edge spanned by $i,j \in I$, $i \neq j$, define
    \begin{align*}
        \alpha_{ij} = - \left(  \frac{ \cos(\pi/p_i - \pi/p_j) + \cos(2\pi/m_{ij}) }{ 2 \sin(\pi/p_i)\sin(\pi/p_j)}\right)^{1/2}.
    \end{align*}
    We let $\alpha_{ii} = 1$. 
    If $i$ and $j$ are not joined by an edge, then $\alpha_{ij} = 0$. 

    We define a Hermitian form $H = H_\Gamma = \langle \cdot, \cdot \rangle$ on $\mathbb{C}^I$ with basis $\{e_i:i \in I\,\}$ by
    \begin{align*}
        \langle e_i,e_j \rangle = \alpha_{ij}.
    \end{align*}  
    The form $H$ viewed as a matrix (which we still call $H$ by abuse of notation) is analogous to the cosine matrix for a usual Coxeter group, and reduces to the cosine matrix if all $p_i = 2$. 
    We can then define a representation $\rho = \rho_\Gamma$ of $G_\Gamma$ by sending the generator $s_i$ to the complex reflection
    \begin{align*}
        R_i(z) = z + (\zeta_{p_i} - 1) \langle z, e_i \rangle e_i,
    \end{align*}
    where $\zeta_{p_i} = \exp(2\pi \sqrt{-1}/p_i)$ is the usual primitive $p_i$-th root of unity. 
    (This reduces to the complexification of the usual ``canonical representation'' of a Coxeter group if $\Gamma$ is Coxeter.)
\end{defn}

Our main tool is the following observation: 

\begin{prop} \label{prop:finimpliespos}
    If the Shephard group $G_\Gamma$ is finite, then the Hermitian form $H_\Gamma$ is positive definite.
\end{prop}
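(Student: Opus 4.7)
The plan is to realize $G_\Gamma$ as a finite subgroup of $\mathrm{GL}(\mathbb{C}^I)$ via the canonical representation $\rho_\Gamma$, produce an invariant positive-definite Hermitian form $K$ by averaging, and then exploit the very restrictive eigenstructure of the reflections $R_i$ to conclude that $H_\Gamma$ is proportional to $K$ on each connected component of $\Gamma$.

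First I would verify that the map $s_i \mapsto R_i$ extends to a representation $\rho_\Gamma\colon G_\Gamma \to \mathrm{GL}(\mathbb{C}^I)$ preserving $H_\Gamma$. The relation $R_i^{p_i} = I$ is immediate since $\langle e_i, e_i\rangle = 1$ forces $R_i$ to have eigenvalue $\zeta_{p_i}$ on $\mathbb{C}e_i$ and eigenvalue $1$ on the $H$-orthogonal hyperplane. The braid relation $\mathrm{prod}(R_i, R_j; m_{ij}) = \mathrm{prod}(R_j, R_i; m_{ij})$ reduces to a two-dimensional calculation on $\mathrm{span}(e_i, e_j)$; the precise definition of $\alpha_{ij}$ is made so that $R_iR_j$ on this plane has eigenvalues $\exp(\pi\sqrt{-1}(1/p_i + 1/p_j \pm 2/m_{ij}))$, from which the braid relation follows (this is Mostow's calculation). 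Preservation of $H_\Gamma$ is then a short expansion using $|\zeta_{p_i}| = 1$ and $\langle e_i, e_i\rangle = 1$.

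Assuming $G_\Gamma$ is finite, $\rho_\Gamma(G_\Gamma)$ is a finite subgroup of $\mathrm{GL}(\mathbb{C}^I)$, so averaging any positive-definite Hermitian form over it produces a $\rho_\Gamma(G_\Gamma)$-invariant positive-definite form $K$. Each $R_i$ is then unitary with respect to $K$, and since $R_i$ is diagonalizable with the two \emph{distinct} eigenvalues $\zeta_{p_i} \neq 1$ on $\mathbb{C}e_i$ and $1$ on $e_i^{\perp_H} = \{v : \langle v, e_i\rangle_H = 0\}$, its eigenspaces must be $K$-orthogonal. Dimension counting then forces $e_i^{\perp_H} = e_i^{\perp_K}$, so the two linear functionals $v \mapsto \langle v, e_i\rangle_H$ and $v \mapsto K(v, e_i)$ share a common kernel, and are therefore proportional; evaluating at $v = e_i$ pins down $\langle v, e_i\rangle_H = c_i K(v, e_i)$ with $c_i = 1/K(e_i, e_i) > 0$.

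Comparing the identity $\langle e_j, e_i\rangle_H = c_i K(e_j, e_i)$ with its $i \leftrightarrow j$ swap via Hermitian symmetry (and $c_i, c_j \in \mathbb{R}$) yields $c_i = c_j$ whenever $K(e_j, e_i) \neq 0$, which is equivalent to $\alpha_{ij} \neq 0$. A short case-check confirms $\alpha_{ij} \neq 0$ for every edge of $\Gamma$ whenever all $p_i$ are finite: vanishing of the numerator $\cos(\pi/p_i - \pi/p_j) + \cos(2\pi/m_{ij})$ would force $|\pi/p_i - \pi/p_j| = \pi - 2\pi/m_{ij}$, and the allowed ranges of $(p_i, p_j, m_{ij})$ together with the requirement $p_i = p_j$ whenever $m_{ij}$ is odd exclude this identity. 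Consequently $c_i$ is constant along each connected component of $\Gamma$, equal to some $c > 0$, so $H_\Gamma = c K$ on that component and hence $H_\Gamma$ is positive definite overall (for a disconnected diagram the splitting of $H_\Gamma$ and $\rho_\Gamma$ reduces the statement to each component). The most delicate point is the trigonometric case-check ruling out $\alpha_{ij} = 0$ on edges, but this is a short verification entirely within the combinatorial constraints on extended Coxeter diagrams.
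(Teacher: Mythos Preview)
Your argument is correct and reaches the same conclusion as the paper, but by a genuinely different route. The paper first proves a structural lemma (every proper $G_\Gamma$-invariant subspace of $\mathbb{C}^I$ is contained in the kernel $H^0$ of $H_\Gamma$), then uses finiteness of $G_\Gamma$ to get semisimplicity, forcing $H^0=0$; the same lemma then gives irreducibility, and Schur's lemma yields that $H_\Gamma$ is a scalar multiple of the averaged invariant inner product. You bypass both irreducibility and Schur entirely: from the eigenspace decomposition of each $R_i$ you read off $e_i^{\perp_H}=e_i^{\perp_K}$ directly, deduce $\langle\,\cdot\,,e_i\rangle_H=c_iK(\,\cdot\,,e_i)$ with $c_i>0$, and then propagate $c_i=c_j$ along edges via $\alpha_{ij}\neq 0$. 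This is more elementary and self-contained, at the cost of the explicit trigonometric check that $\alpha_{ij}\neq 0$ on edges. It is worth noting that the paper's lemma also tacitly relies on this same nonvanishing (the step ``the coefficient of $e_j$ in $R_j(e_i)$ is nonzero'' is exactly $\alpha_{ij}\neq 0$), so you have simply made explicit what the paper left as a ``direct computation.'' Your range argument for the nonvanishing is fine: for $m_{ij}=3$ the odd-edge constraint forces $p_i=p_j$, while for $m_{ij}\geq 4$ one has $\pi-2\pi/m_{ij}\geq\pi/2>|\pi/p_i-\pi/p_j|$.
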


The proof is identical to the corresponding statement for Coxeter groups. We provide the proof here for completeness. First, we need a lemma.

\begin{lemma} \label{lem:kernelofH} \emph{(cf. \cite[Ch.~V, \S 4.6 Thm.~7]{bourbaki2008lie})}
    Suppose $\Gamma$ is connected. Let $H^0 = H^0_\Gamma$ denote the kernel of $H = H_\Gamma$, i.e.,
    \[
        H^0 = \{\,x \in \mathbb{C}^I:H(x,y) = 0 \text{ \emph{for each} } y \in \mathbb{C}^I\,\}.
    \]
    Then $G_\Gamma$ acts trivially on $H^0$ and every proper $G_\Gamma$-invariant subspace is contained in $H^0$.
\end{lemma}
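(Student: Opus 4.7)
The plan is to prove the two parts in sequence, mirroring the classical Bourbaki argument for Coxeter groups but handling the complex reflections of possibly higher order carefully.

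For the first claim, the key is the defining formula $R_i(z) = z + (\zeta_{p_i}-1)\langle z, e_i\rangle e_i$. If $x \in H^0$, then $\langle x, e_i \rangle = 0$ for every $i$, so $R_i(x) = x$; since $\rho(G_\Gamma)$ is generated by the $R_i$, every element of $G_\Gamma$ fixes $H^0$ pointwise.

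For the second claim, let $V \subsetneq \mathbb{C}^I$ be a proper $G_\Gamma$-invariant subspace. Invariance under each $R_i$ forces $R_i(v) - v = (\zeta_{p_i}-1)\langle v, e_i\rangle e_i \in V$ for all $v \in V$. Since $\zeta_{p_i} \neq 1$, we get the following dichotomy: for each $i$, either $e_i \in V$, or $\langle v, e_i\rangle = 0$ for every $v \in V$. Write $I = J \sqcup K$ with $J = \{i : e_i \in V\}$ and $K = I \setminus J$. Note that $V = \mathbb{C}^I$ iff $J = I$, so properness of $V$ forces $K \neq \emptyset$.

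The crux is to show that no edge of $\Gamma$ joins $J$ to $K$. If $\{j,k\}$ were such an edge with $j \in J$, $k \in K$, then $e_j \in V$ together with $\langle v, e_k\rangle = 0$ for all $v \in V$ would force $\alpha_{jk} = \langle e_j, e_k\rangle = 0$. But one verifies by a short case analysis that $\alpha_{jk} \neq 0$ whenever $m_{jk} \geq 3$: a sum-to-product identity reduces the vanishing $\cos(\pi/p_j - \pi/p_k) + \cos(2\pi/m_{jk}) = 0$ (up to swapping $j, k$) to $1/p_j - 1/p_k = 1 - 2/m_{jk}$, and for finite $p_j, p_k \geq 2$ and $m_{jk} \geq 3$, with $p_j = p_k$ forced when $m_{jk}$ is odd, this equation has no solutions. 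Hence $J$ and $K$ are unjoined in $\Gamma$; connectedness together with $K \neq \emptyset$ then yields $J = \emptyset$, so $\langle v, e_i\rangle = 0$ for all $v \in V$ and all $i$, meaning $V \subseteq H^0$.

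The main obstacle I anticipate is the case check confirming $\alpha_{jk} \neq 0$ on edges; this is the only place where the Shephard-specific hypotheses (all $p_i$ finite, and $p_i = p_j$ on odd edges) genuinely enter. Everything else is either immediate from the definition of $\rho$ or a standard connectedness argument on $\Gamma$.
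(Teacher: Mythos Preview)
Your proof is correct and follows essentially the same approach as the paper's. The only organizational difference is that the paper first argues directly (via connectedness propagation) that no $e_i$ lies in the invariant subspace, then uses the eigenspace decomposition $\mathbb{C}^I = e_i^\perp \oplus \mathbb{C} e_i$ to conclude $E \subseteq \bigcap_i e_i^\perp = H^0$; your $J/K$ partition is a repackaging of the same idea. You also make explicit the verification that $\alpha_{jk} \neq 0$ on edges, which the paper dismisses as ``direct computation,'' so your write-up actually fills in a detail the paper leaves implicit.
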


\begin{proof}
    By the definition of the maps $R_i$, if $z \in H^0$ then $R_i(z) = z$ for all $i$, and hence $H^0$ is fixed by $G_\Gamma$.

    Let $E$ be a proper subspace of $\mathbb{C}^I$ fixed by $G_\Gamma$. Suppose some $e_i$ is contained in $E$. Let $j$ be a vertex of $\Gamma$ joined to $i$ by an edge. By direct computation we see that the coefficient of $e_j$ in $R_j(e_i)$ is nonzero, and hence $e_j$ is also contained in $E$. Since $\Gamma$ is connected, it follows that every basis vector is contained in $E$, contradicting our assumption that $E$ is proper. Thus $E$ contains no basis vector.

    Consider the action of some $R_i$ on $\mathbb{C}^I$. By basic linear algebra, $\mathbb{C}^I$ decomposes as the $+1$ and $\zeta_{p_i}$ eigenspaces of $R_i$, which are $e_i^\perp$ (the subspace orthogonal to $e_i$ under $H$) and $\mathbb{C} e_i$, respectively. Since $e_i \not\in E$ by assumption, we must have $E \subseteq e_i^\perp$. Thus $E \subseteq \bigcap e_i^\perp = H^0$.
\end{proof}

We can now complete the proof of Proposition \ref{prop:finimpliespos}.

\begin{proof}[Proof (of Proposition \ref{prop:finimpliespos})]
    Without loss of generality, we may assume $\Gamma$ is connected (otherwise $H_\Gamma$ is positive definite if and only if the Hermitian form for each component is positive definite). 

    First we note that by classical representation theory\footnote{Finite group representations $\tau : H \to \mathbb{C}$ are semisimple by Mashke's theorem \cite{serre1977linear}, and fix the inner product defined by $(x,y) = \frac{1}{|H|} \sum_{h \in H} \langle \tau(h)x, \tau(h)y \rangle$, where $\langle \cdot,\cdot \rangle$ is the standard inner product on $\mathbb{C}^n$.}, $\rho$ is semisimple (i.e., if a subspace of $\mathbb{C}^I$ is $G$-invariant under $\rho$ then it's a direct summand of $\mathbb{C}^I$) and fixes some inner product (i.e., a positive definite Hermitian form). 
    Since this representation is semisimple, Lemma \ref{lem:kernelofH} implies $H$ is non-degenerate; 
    otherwise, we would have that $H^0 \neq 0$ is a proper non-trivial subspace fixed by $G$, but by the Lemma, there can be no complementary $G$-invariant subspace of $H^0$, contradicting the fact that $\rho$ is semisimple. Thus $H$ is nondegenerate, i.e., has $H^0 = 0$, and thus by Lemma \ref{lem:kernelofH}, we know that the representation $\rho$ is irreducible.
    Since $\rho$ is irreducible, it follows from Schur's lemma that $H$ is a scalar multiple of the aforementioned invariant inner product. Since we've required $H(e_i,e_i) = 1$ for all $i$, it follows that $H$ is positive definite.
\end{proof}

Our main use for this proposition is via its contrapositive, namely that if $H_\Gamma$ is not positive definite, then $G_\Gamma$ is infinite. The following is the other key technical fact.

\begin{lemma} \label{lem:2gencomp}
    Let $p,q \geq 2$ and $m \geq 3$ be integers with $p = q$ whenever $m$ is odd. Define
    \begin{align*}
        c &= -\cos(\pi/m), \\
        \alpha &= - \left(  \frac{ \cos(\pi/p - \pi/q) + \cos(2\pi/m) }{ 2 \sin(\pi/p)\sin(\pi/q)}\right)^{1/2}.
    \end{align*}
    Then $\alpha \leq c$.
\end{lemma}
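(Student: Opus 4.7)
The plan is to square both sides. First, $c = -\cos(\pi/m) \leq 0$ for $m \geq 3$, and the hypotheses ensure the radicand defining $\alpha$ is nonnegative: when $m$ is odd the constraint $p = q$ gives $\cos(\pi/p - \pi/q) = 1$, which together with $\cos(2\pi/m) \geq -\tfrac{1}{2}$ keeps the radicand positive; when $m$ is even (so $m \geq 4$), both $\cos(\pi/p - \pi/q)$ and $\cos(2\pi/m)$ are nonnegative since $|\pi/p - \pi/q| \leq \pi/2$ and $2\pi/m \leq \pi/2$. So $\alpha \leq 0$, and the desired inequality $\alpha \leq c$ is equivalent to $\alpha^2 \geq c^2$.

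I would then manipulate $\alpha^2 - c^2$ using $\cos(2\pi/m) = 2\cos^2(\pi/m) - 1$ together with the angle-difference formula $\cos(\pi/p - \pi/q) = \cos(\pi/p)\cos(\pi/q) + \sin(\pi/p)\sin(\pi/q)$ to obtain
\begin{equation*}
    \alpha^2 - c^2 = \frac{\cos(\pi/p)\cos(\pi/q) + \cos(2\pi/m)\bigl(1 - \sin(\pi/p)\sin(\pi/q)\bigr)}{2\sin(\pi/p)\sin(\pi/q)}.
\end{equation*}
The denominator is positive, so it suffices to prove the numerator is nonnegative. Both $\cos(\pi/p)\cos(\pi/q) \geq 0$ and $1 - \sin(\pi/p)\sin(\pi/q) \geq 0$ hold automatically, since $\pi/p, \pi/q \in (0, \pi/2]$. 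If $m \geq 4$ then $\cos(2\pi/m) \geq 0$ as well, and the numerator is manifestly a sum of nonnegative terms. The only remaining case is $m = 3$, where $\cos(2\pi/3) = -\tfrac{1}{2}$; but then the hypothesis forces $p = q$, and using $1 - \sin^2(\pi/p) = \cos^2(\pi/p)$ the numerator collapses to $\tfrac{1}{2}\cos^2(\pi/p) \geq 0$.

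The argument is essentially one careful trigonometric identity followed by a short case analysis, so no step is genuinely difficult; the only real content is identifying the rewriting above, which cleanly isolates the role of the assumption ``$p = q$ when $m$ is odd'' to exactly the single exceptional value $m = 3$, the only value of $m$ for which $\cos(2\pi/m)$ is negative.
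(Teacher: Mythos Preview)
Your proof is correct and arguably cleaner than the one in the paper. The paper proceeds by case analysis on whether $\alpha \leq -1$: if so, the inequality is immediate since $c \geq -1$; if not, the condition $\alpha^2 < 1$ is equivalent to $1/p + 1/q + 2/m > 1$, which (aside from the $m = 4$ family, handled separately) leaves only finitely many triples $(p,m,q)$ to check directly. Your approach instead computes $\alpha^2 - c^2$ in closed form and shows it is nonnegative with essentially no case-checking beyond the single value $m = 3$.

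The advantage of your route is that it explains \emph{why} the inequality holds rather than verifying a finite list, and it makes transparent that the hypothesis ``$p = q$ when $m$ is odd'' is needed only at $m = 3$ (for $m \geq 5$ odd one has $\cos(2\pi/m) \geq 0$ and the hypothesis is irrelevant to the inequality). The paper's approach, by contrast, leans on the classification of finite two-generator Shephard groups implicit in the condition $1/p + 1/q + 2/m > 1$, which is conceptually suggestive but makes the argument less self-contained. Both reach the same conclusion; yours is the more direct computation.
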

\begin{proof}

    We'll consider two cases. First, suppose $\alpha \leq -1$. Since cosine is bounded above by 1, this means 
    \begin{align*}
        \alpha \leq -1 \leq -\cos(\pi/m) = c.
    \end{align*}

    Now suppose $\alpha > -1$. Since $\alpha \leq 0$, this implies $\alpha^2 < 1$. By a straightforward trigonometric exercise, this happens if and only if $ 1/p + 1/q  +  2/m > 1$.
    There are only finitely many such triples $(p,m,q)$ when
    $m \neq 4$
    (since $p = q$ when $m = 3$). These cases are easily checked by direct computation.
    When $m = 4$, this relation holds only when either $p=q=3$, or $q = 2$ and $p \geq 2$ (or vice versa). If $p=q=3$ then the computation is immediate. If $q = 2$ and $p \geq 2$, we see that 
    \begin{align*}
        \alpha 
        &= -\left( \frac{\cos(\pi/p - \pi/2) + \cos(2\pi/4)}{2\sin(\pi/p)\sin(\pi/2)}\right)^{1/2} \\
        &= -\left( \frac{\sin(\pi/p)}{2\sin(\pi/p)}\right)^{1/2} \\
        &= -2^{-1/2} \\
        &= -\cos(\pi/4) = c. \qedhere
    \end{align*}
\end{proof}

\begin{prop} \label{lem:extendfiniteunderlying}
    Let $\Gamma$ be an extended Coxeter diagram. If the Coxeter group $W_\Gamma$ is not finite, then neither is $G_\Gamma$. (In other words, $\mathcal S^{fs} \subseteq \mathcal S^f$.)
\end{prop}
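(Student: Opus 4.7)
The plan is to establish the contrapositive via the Hermitian form machinery already developed: if $W_\Gamma$ is infinite, show that $H_\Gamma$ fails to be positive definite, and then invoke the contrapositive of Proposition~\ref{prop:finimpliespos}.

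First I would dispatch the trivial case that some $p_i$ is infinite, for which $\langle s_i\rangle \cong \mathbb{Z} \le G_\Gamma$ makes the conclusion automatic. Henceforth all $p_i$ are finite, so that $H_\Gamma$ and the representation $\rho_\Gamma$ are defined. I would also observe, using the constraint that $p_i = p_j$ whenever $m_{ij}$ is odd, that the numerator inside the square root defining $\alpha_{ij}$ is always nonnegative; hence every $\alpha_{ij}$ is a real nonpositive number and $H_\Gamma$ may be viewed as a real symmetric matrix.

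The heart of the argument is a direct entrywise comparison of $H_\Gamma$ with the Coxeter cosine matrix $C_\Gamma = (c_{ij})$, $c_{ii} = 1$, $c_{ij} = -\cos(\pi/m_{ij})$ (with the convention $c_{ij} = -1$ when $m_{ij} = \infty$). By the classical positive-definiteness criterion for finite Coxeter groups \cite{bourbaki2008lie}, $W_\Gamma$ being infinite means $C_\Gamma$ is not positive definite, so there exists some nonzero $x \in \mathbb{R}^I$ with $x^T C_\Gamma x \le 0$. Replacing $x$ by its entrywise absolute value $v$ only decreases the quadratic form, because every off-diagonal $c_{ij}$ is nonpositive; so $v^T C_\Gamma v \le 0$ as well. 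Lemma~\ref{lem:2gencomp} then gives $\alpha_{ij} \le c_{ij}$ for each $i \ne j$, and since $v_i v_j \ge 0$ this upgrades entrywise to
\[
    v^T H_\Gamma v \;\le\; v^T C_\Gamma v \;\le\; 0.
\]
Thus $H_\Gamma$ is not positive definite, and the contrapositive of Proposition~\ref{prop:finimpliespos} forces $G_\Gamma$ to be infinite.

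I do not expect any serious obstacle. The one point warranting care is extending Lemma~\ref{lem:2gencomp} to cover $m_{ij} = \infty$: there $c_{ij} = -1$ and the required $\alpha_{ij} \le -1$ reduces, via the angle-addition formula, to the elementary inequality $\cos(\pi/p_i + \pi/p_j) + 1 \ge 0$. Every other substantive input---Proposition~\ref{prop:finimpliespos}, Lemma~\ref{lem:2gencomp}, and the classical positive-definiteness characterization of finite Coxeter groups---is already in place.
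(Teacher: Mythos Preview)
Your approach is essentially the paper's: compare $H_\Gamma$ entrywise with the cosine matrix $C_\Gamma$ via Lemma~\ref{lem:2gencomp} and then conclude by the contrapositive of Proposition~\ref{prop:finimpliespos}. You are in fact more careful than the paper on one point: the paper passes directly from $\alpha_{ij}\le c_{ij}$ to $x^{*}H_\Gamma x\le x^{*}C_\Gamma x$ without arranging that the cross-terms $\overline{x_i}x_j$ are nonnegative, whereas your replacement of $x$ by its entrywise absolute value (valid since every off-diagonal $c_{ij}\le 0$) makes that inequality honest. Your explicit handling of the edge cases $p_i=\infty$ and $m_{ij}=\infty$ is likewise absent from the paper's argument but is a welcome addition.
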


\begin{proof}
    If $W_\Gamma$ is not finite, then the cosine matrix $C = C_\Gamma = (c_{ij})$, where $c_{ij} = - \cos(\pi/m_{ij})$, $c_{ii} = 1$ 
    is not positive definite \cite[\S 4.8 Thm.~2]{bourbaki2008lie}.
    This means there exists a non-zero vector $x \in \mathbb{C}^n$ such that 
    \begin{align*}
        x^* C x \leq 0.
    \end{align*}
    Expanding the product gives us
    \begin{align*}
        \sum c_{ij} \overline{x_i} x_j \leq 0.
    \end{align*}
    By Lemma \ref{lem:2gencomp}, in combination with the requirements that $\alpha_{ij} = 0$ when $i$ and $j$ don't span an edge of $\Gamma$ and $\alpha_{ii} = 1$, we know that $\alpha_{ij} \leq c_{ij}$ for all $i,j$. Therefore,
    \begin{align*}
        x^* H x = \sum \alpha_{ij} \overline{x_i}x_j \leq \sum c_{ij} \overline{x_i} x_j \leq 0.
    \end{align*}
    Hence $H = H_\Gamma$ is not positive definite, so $G_{\Gamma}$ is not finite.
\end{proof}

This narrows down our search to adding vertex labels to the diagrams in Table \ref{tab:finitediagrams}. 
The following Lemma rules out the branched Coxeter diagrams.

\begin{lemma} \label{lem:dpnonfinite}
    Let $D_4(p)$ denote the following extended Coxeter diagram:
    \begin{center}
    \begin{tikzpicture}
        \filldraw (0.15,0) circle (0.05cm);
        \filldraw (1.75,0.5) circle (0.05cm);
        \filldraw (1.75,-0.5) circle (0.05cm);
        \draw (0.15,0) -- (1,0);
        \filldraw (1,0) circle (0.05cm);
        \draw (1.75,0.5) -- (1,0) -- (1.75,-0.5); 
        \node at (-0.1,0) {$p$};
        \node at (1,-0.3) {$p$};
        \node at (2,0.5) {$p$};
        \node at (2,-0.5) {$p$};
    \end{tikzpicture}
    \end{center}
    If $p > 2$, then the corresponding Hermitian form $H_{D_4(p)}$ is not positive definite. In particular, any Shephard group with a subdiagram of this form is not finite. 
\end{lemma}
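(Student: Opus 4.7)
The plan is to compute the Hermitian form $H_{D_4(p)}$ explicitly and then exhibit a vector witnessing non-positive-definiteness. For every edge of $D_4(p)$ we have $p_i = p_j = p$ and $m_{ij} = 3$, so the formula in the definition simplifies:
\[
\alpha_{ij} = -\left(\frac{\cos(0) + \cos(2\pi/3)}{2\sin^2(\pi/p)}\right)^{1/2} = -\left(\frac{1/2}{2\sin^2(\pi/p)}\right)^{1/2} = -\frac{1}{2\sin(\pi/p)}.
\]
Denote this common value by $\alpha$. Labeling the central vertex of $D_4$ by $0$ and the three leaves by $1,2,3$, the matrix $H_{D_4(p)}$ has $1$'s on the diagonal, $\alpha$ in positions $(0,k)$ and $(k,0)$ for $k = 1,2,3$, and zero elsewhere.

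Next I would exploit the threefold symmetry permuting the leaves and test vectors of the form $v = (a,1,1,1)$. Direct computation gives
\[
v^{*} H_{D_4(p)}\, v = a^2 + 3 + 6\alpha a,
\]
which is a real quadratic in $a$ minimized at $a = -3\alpha$ with minimum value $3 - 9\alpha^{2} = 3(1 - 3\alpha^{2})$. Plugging in $\alpha = -1/(2\sin(\pi/p))$ this minimum is $\leq 0$ precisely when $\sin(\pi/p) \leq \sqrt{3}/2$, i.e., precisely when $p \geq 3$. Thus for $p > 2$ the vector $v = (-3\alpha, 1, 1, 1)$ satisfies $v^{*} H_{D_4(p)}\, v \leq 0$, so $H_{D_4(p)}$ fails to be positive definite.

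For the "in particular" clause, I would use the obvious observation that if $\Gamma'$ is a full subdiagram of an extended Coxeter diagram $\Gamma$, then $H_{\Gamma'}$ sits inside $H_{\Gamma}$ as a principal submatrix (the restriction of $H_\Gamma$ to the coordinate subspace spanned by the $e_i$'s with $i$ a vertex of $\Gamma'$). Consequently, any vector $v \in \mathbb{C}^{\Gamma'}$ with $v^{*} H_{\Gamma'} v \leq 0$ extends by zero to a vector $\tilde v \in \mathbb{C}^{\Gamma}$ with $\tilde v^{*} H_{\Gamma} \tilde v \leq 0$, so $H_\Gamma$ is also not positive definite. Applying the contrapositive of Proposition \ref{prop:finimpliespos} then yields that $G_\Gamma$ is infinite whenever $\Gamma$ contains a subdiagram of type $D_4(p)$ with $p > 2$.

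There is no serious obstacle here; the only point requiring care is choosing the right trial vector, which is dictated by the leaf-permuting symmetry of $D_4$. Everything else is bookkeeping with the explicit formula for $\alpha$.
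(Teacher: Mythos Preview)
Your argument is correct and runs parallel to the paper's: both compute the common off-diagonal entry $\alpha=-1/(2\sin(\pi/p))$ and both reduce to the inequality $1-\dfrac{3}{4\sin^{2}(\pi/p)}\le 0$ for $p\ge 3$. The only difference is the linear-algebraic check used. The paper writes down the $4\times4$ matrix and computes its determinant to be $1-\dfrac{3}{4\sin^2(\pi/p)}$, which is nonpositive for $p\ge 3$; you instead exhibit an explicit nonzero vector $v=(-3\alpha,1,1,1)$ with $v^{*}Hv=3\bigl(1-\tfrac{3}{4\sin^{2}(\pi/p)}\bigr)\le 0$. These are equivalent (indeed your minimum is exactly three times the paper's determinant, reflecting the Schur-complement relation), and your version has the mild advantage of producing a concrete witness rather than invoking the positivity of the determinant for positive-definite matrices. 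Your treatment of the ``in particular'' clause via the principal-submatrix/extension-by-zero observation is exactly what the paper uses as well.
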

\begin{proof}
    Let $H = H_{D_4(p)}$. Suppose the vertex set of $D_4(p)$ is $\{1,2,3,4\}$, with $1$ being the ``central'' (valence 3) vertex.
    We compute for $j \neq 1$ that
    \begin{align*}
        \alpha_{1j} = \alpha_{j1} &= - \left(  \frac{ \cos(\pi/p - \pi/p) + \cos(2\pi/3) }{ 2 \sin(\pi/p)\sin(\pi/p)}\right)^{1/2}\\
        &= - \left(\frac{1 + \cos(2\pi/3)}{2 \sin^2(\pi/p)}\right)^{1/2} \\
        &= - \left(\frac{1 - \frac{1}{2}}{2 \sin^2(\pi/p)}\right)^{1/2} \\
        &= - \left(\frac{1 }{4 \sin^2(\pi/p)}\right)^{1/2} \\
        &= - \frac{1}{2\sin(\pi/p)}.
    \end{align*}
    By the definition of $H$, for all $i$, we have $\alpha_{ii} = 1$, and if $i \neq j$ and neither $i$ nor $j$ are $1$, then $\alpha_{ij} = 0$. Thus the matrix representing $H$ is 
    \begin{align*}
        \left( \begin{matrix}
            1 & -(2\sin(\pi/p))^{-1}& -(2\sin(\pi/p))^{-1} & -(2\sin(\pi/p))^{-1} \\
            -(2\sin(\pi/p))^{-1} & 1 & 0 & 0 \\
            -(2\sin(\pi/p))^{-1} & 0 & 1 & 0 \\
            -(2\sin(\pi/p))^{-1} & 0 & 0 & 1 \\
        \end{matrix} \right).
    \end{align*}   
    The determinant of this matrix is 
    \[
        1 - \frac{3}{4\sin^2(\pi/p)}
    \]
    Notice that this function is decreasing in $p$. When $p = 3$, this determinant is $0$. Hence it is non-positive for all $p \geq 3$, and thus $H$ is not positive definite when $p \geq 3$. 
    It follows that any Shephard group with a $D_4(p)$ subdiagram has Hermitian form which is not positive definite, so the group is not finite.
\end{proof}

\begin{thm} \label{thm:classifyfinite}
    A connected extended Coxeter diagram $\Gamma$ produces a finite Shephard group $G_\Gamma$ if and only if it appears in Tables \ref{tab:finitediagrams} or \ref{tab:finiteshephards}. 
\end{thm}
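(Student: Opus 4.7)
The plan is to split the equivalence into the classical "if" direction and the novel "only if" direction. The "if" direction — that every diagram in Tables \ref{tab:finitediagrams} and \ref{tab:finiteshephards} produces a finite Shephard group — is known from the classification of finite real and complex reflection groups: Table \ref{tab:finitediagrams} is the Coxeter list, and each diagram in Table \ref{tab:finiteshephards} is the diagram of a non-Coxeter finite complex reflection group in the Shephard–Todd classification, where the order of the group matches the order of the abstract group presented by (\ref{def:sheppres}), so the canonical surjection from $G_\Gamma$ onto that complex reflection group must be an isomorphism. I would briefly invoke these facts rather than reprove them.

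For the forward direction, assume $G_\Gamma$ is finite with $\Gamma$ connected. First I would apply Proposition \ref{lem:extendfiniteunderlying} to reduce to the case that the underlying Coxeter diagram of $\Gamma$ is one of the classical connected finite Coxeter diagrams $A_n, B_n, D_n, E_6, E_7, E_8, F_4, H_3, H_4, I_2(m)$ listed in Table \ref{tab:finitediagrams}. If all $p_i = 2$ then $\Gamma$ itself appears in Table \ref{tab:finitediagrams} and we are done, so I may assume some $p_i > 2$.

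Next I would rule out the branched underlying types $D_n$ ($n\geq 4$), $E_6$, $E_7$, $E_8$. Each of these has all edge labels equal to $3$; since $3$ is odd, the conjugacy constraint $p_i = p_j$ on every edge forces all vertex labels to a common value $p > 2$. Each such diagram contains a copy of $D_4$ on a branch vertex together with three of its neighbors (one from each branch), and inheriting the label $p$ this yields a $D_4(p)$ subdiagram with $p > 2$. Lemma \ref{lem:dpnonfinite} then gives a contradiction. Hence the underlying Coxeter diagram must be linear: one of $A_n, B_n, F_4, H_3, H_4, I_2(m)$.

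Finally, for each linear underlying type I would enumerate the labelings permitted by the conjugacy condition (so chains of $3$-edges share a label, etc.) and, for each resulting candidate $\Gamma$ with some $p_i > 2$, use Proposition \ref{prop:finimpliespos} as a necessary condition by computing $\det H_\Gamma$ (or exhibiting a non-positive eigenvector) to decide positive definiteness. For the one-parameter families this is a recursion on length: expanding $\det H$ along an end vertex yields a linear recursion whose coefficients are explicit trigonometric expressions in the $p_i$ and $m_{ij}$, and the sign of the result can be controlled using Lemma \ref{lem:2gencomp}. The finitely many sporadic cases ($F_4, H_3, H_4$ and small $I_2(m)$) are checked by direct computation. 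I expect this last step to be the main technical obstacle, as it is where the precise list of Table \ref{tab:finiteshephards} must be matched; the payoff is that every labeling not in Table \ref{tab:finiteshephards} is ruled out by non-positivity, completing the proof.
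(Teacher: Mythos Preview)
Your proposal is correct and follows the paper's argument exactly for the branched case: reduce via Proposition~\ref{lem:extendfiniteunderlying} to a finite underlying Coxeter type, then observe that $D_n$ and $E_m$ force a common vertex label $p>2$ and contain a $D_4(p)$ subdiagram, contradicting Lemma~\ref{lem:dpnonfinite}.

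The one substantive difference is your treatment of the linear case. The paper does not carry out your proposed enumeration of labelings and determinant computations for $A_n$, $B_n$, $F_4$, $H_3$, $H_4$, $I_2(m)$; instead it simply cites \cite[\S 13]{1975regular}, where the classification of finite straight-line Shephard groups is already established (via the classification of regular complex polytopes). Your approach would work---Proposition~\ref{prop:finimpliespos} gives exactly the necessary condition you need, and the recursion you sketch is the standard tridiagonal expansion---but it reproduces a known result and is considerably more labor than invoking the reference. The payoff of your route would be a self-contained proof independent of the polytope classification; the paper's route is shorter and keeps the focus on what is genuinely new, namely ruling out the branched non-Coxeter diagrams.
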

\begin{proof}
    The result for Coxeter diagrams is well known \cite{bourbaki2008lie}. So, suppose $\Gamma$ is non-Coxeter (i.e., has at least one vertex with label $> 2$). If $\Gamma$ is a straight line, then by \cite[\S 13]{1975regular}, $G_\Gamma$ is finite if and only if it appears in Table \ref{tab:finiteshephards}. If $\Gamma$ is not a straight line, then, by Proposition \ref{lem:extendfiniteunderlying}, it must have underlying Coxeter diagram of type $D_n$, $n \geq 4$, or $E_m$, $m = 6,7,8$. Since the label of vertices connected by an odd-labeled edge must agree, we have that every vertex label of $\Gamma$ is $p$ for some $p \geq 2$. Since we've assumed $\Gamma$ is non-Coxeter, we know $p \geq 3$. But then $\Gamma$ must contain a subdiagram of the form $D_4(p)$ with $p \geq 3$; by Lemma \ref{lem:dpnonfinite}, this forces the group to be infinite.
\end{proof}

\begin{table}[ht]
    \centering
    \begin{minipage}{\linewidth}
    \setlength{\columnsep}{-2cm}
        \begin{multicols}{2}
            \begin{tikzpicture}
                \node at (-0.75,0) {$A_n$};  
                \filldraw (0,0) circle (0.05cm);
                \draw (0,0) -- (2.3,0);
                \filldraw (1,0) circle (0.05cm);
                \filldraw (2,0) circle (0.05cm);
                \node at (2.75,0) {. . .};
                \filldraw(3.5,0) circle (0.05cm);
                \draw (3.5,0) -- (3.25,0);
            \end{tikzpicture}

            \begin{tikzpicture}
                \node at (-0.75,0) {$B_n$}; 
        \filldraw (0,0) circle (0.05cm);
        \draw (0,0) -- node[above] {$4$} (1,0);
        \draw (1,0) -- (2.3,0);
        \filldraw (1,0) circle (0.05cm);
        \filldraw (2,0) circle (0.05cm);
        \node at (2.75,0) {. . .};
        \filldraw(3.5,0) circle (0.05cm);
        \draw (3.5,0) -- (3.25,0);
        \end{tikzpicture}
        \vspace{0.5em}

        \begin{tikzpicture}
             \node at (-0.75,0) {$D_n$}; 
        \filldraw (0,0) circle (0.05cm);
        \draw (0,0) -- (1.3,0);
        \filldraw (1,0) circle (0.05cm);
        \node at (1.75,0) {. . .};
        \filldraw(2.5,0) circle (0.05cm);
        \draw (2.5,0) -- (2.25,0);
        \filldraw (3.45,0.5) circle (0.05cm);
        \filldraw (3.45,-0.5) circle (0.05cm);
        \draw (3.45,0.5) -- (2.5,0) -- (3.45,-0.5); 
        \end{tikzpicture}
        
        \begin{tikzpicture}
        \node at (-0.75,0) {$I_2(m)$}; 
        \filldraw (0,0) circle (0.05cm);
        \draw (0,0) -- node[above] {$m$} (1,0);
        \filldraw (1,0) circle (0.05cm);
        \end{tikzpicture}
        
        \begin{tikzpicture}
        \node at (-0.75,0) {$H_3$}; 
        \filldraw (0,0) circle (0.05cm);
        \draw (0,0) -- node[above] {$5$} (1,0);
        \filldraw (1,0) circle (0.05cm);
        \draw (1,0) -- (2,0);
        \filldraw (2,0) circle (0.05cm);
        \end{tikzpicture}
        
        \begin{tikzpicture}
        \node at (-0.75,0) {$H_4$};
        \filldraw (0,0) circle (0.05cm);
        \draw (0,0) -- node[above] {$5$} (1,0);
        \filldraw (1,0) circle (0.05cm);
        \draw (1,0) -- (2,0);
        \filldraw (2,0) circle (0.05cm);
        \draw (3,0) -- (2,0);
        \filldraw (3,0) circle (0.05cm);
        \end{tikzpicture}
        \vspace{1em}
        
        \begin{tikzpicture}
        \node at (-0.75,0) {$F_4$};
        \filldraw (0,0) circle (0.05cm);
        \draw (0,0) -- (1,0);
        \filldraw (1,0) circle (0.05cm);
        \draw (1,0) -- node[above] {$4$} (2,0);
        \filldraw (2,0) circle (0.05cm);
        \draw (3,0) -- (2,0);
        \filldraw (3,0) circle (0.05cm);
        \end{tikzpicture}
        \vspace{1em}
        
        \begin{tikzpicture}
        \node at (-0.75,-0.5) {$E_6$};
        \filldraw (0,0) circle (0.05cm);
        \filldraw (1,0) circle (0.05cm);
        \filldraw (2,0) circle (0.05cm);
        \filldraw (3,0) circle (0.05cm);
        \filldraw (4,0) circle (0.05cm);
        \filldraw (2,-1) circle (0.05cm);
        \draw (0,0) -- (4,0);
        \draw (2,0) -- (2,-1);
        \end{tikzpicture}
        \vspace{1em}
        
        \begin{tikzpicture}
        \node at (-0.75,-0.5) {$E_7$};
        \filldraw (0,0) circle (0.05cm);
        \filldraw (1,0) circle (0.05cm);
        \filldraw (2,0) circle (0.05cm);
        \filldraw (3,0) circle (0.05cm);
        \filldraw (4,0) circle (0.05cm);
        \filldraw (5,0) circle (0.05cm);
        \filldraw (2,-1) circle (0.05cm);
        \draw (0,0) -- (5,0);
        \draw (2,0) -- (2,-1);
        \end{tikzpicture}
        \vspace{1em}
        
        \begin{tikzpicture}
    \node at (-0.75,-0.5) {$E_8$};
        \filldraw (0,0) circle (0.05cm);
        \filldraw (1,0) circle (0.05cm);
        \filldraw (2,0) circle (0.05cm);
        \filldraw (3,0) circle (0.05cm);
        \filldraw (4,0) circle (0.05cm);
        \filldraw (5,0) circle (0.05cm);
        \filldraw (6,0) circle (0.05cm);
        \filldraw (2,-1) circle (0.05cm);
        \draw (0,0) -- (6,0);
        \draw (2,0) -- (2,-1);
        \end{tikzpicture}
        \vspace{1em}

        \end{multicols}
    \end{minipage}
    \vspace{1em}

    \caption{The finite Coxeter diagrams}
    \label{tab:finitediagrams}

    \vspace{-1em}
\end{table}

\begin{table}[ht]
    \centering
    \begin{minipage}{\linewidth}
    \setlength{\columnsep}{-0.5cm}
        \begin{multicols}{2}
        \begin{tikzpicture}
            \node at (-1.35,0) {$B_n(p,2)$}; 
            \filldraw (0,0) circle (0.05cm);
            \draw (0,0) -- node[above] {$4$} (1,0);
            \draw (1,0) -- (2.3,0);
            \filldraw (1,0) circle (0.05cm);
            \filldraw (2,0) circle (0.05cm);
            \node at (2.75,0) {. . .};
            \filldraw(3.5,0) circle (0.05cm);
            \draw (3.5,0) -- (3.25,0);
            \node at (0,-0.25) { $p$ };
        \end{tikzpicture}

        \begin{tikzpicture}
        \node at (-1.36,0) {$B_3(2,3)$}; 
        \filldraw (0,0) circle (0.05cm);
        \draw (0,0) -- node[above] {$4$} (1,0);
        \filldraw (1,0) circle (0.05cm);
        \draw (1,0) -- (2,0);
        \filldraw (2,0) circle (0.05cm);
        \node at (1,-0.3) {$3$};
        \node at (2,-0.3) {$3$};
        \end{tikzpicture}

        \begin{tikzpicture}
        \node at (-1.2,0) {$I_2(p,m,q)$}; 
        \filldraw (0,0) circle (0.05cm);
        \draw (0,0) -- node[above] {$m$} (1,0);
        \filldraw (1,0) circle (0.05cm);
        \node at (0,-0.25) {$p$};
        \node at (1,-0.25) {$q$};
        \node at (2.8,0) { $ \left( \frac{1}{p} + \frac{1}{q} + \frac{2}{m} > 1 \right)$ };
        \end{tikzpicture}

        \vspace{6em}

        \begin{tikzpicture}
        \node at (-1,0) {$A_3(3)$}; 
        \filldraw (0,0) circle (0.05cm);
        \draw (0,0) -- (1,0);
        \filldraw (1,0) circle (0.05cm);
        \draw (1,0) -- (2,0);
        \filldraw (2,0) circle (0.05cm);
        \node at (0,-0.3) {$3$};
        \node at (1,-0.3) {$3$};
        \node at (2,-0.3) {$3$};
        \end{tikzpicture}
        
        \begin{tikzpicture}
        \node at (-1,0) {$A_4(3)$};
        \filldraw (0,0) circle (0.05cm);
        \draw (0,0) -- (1,0);
        \filldraw (1,0) circle (0.05cm);
        \draw (1,0) -- (2,0);
        \filldraw (2,0) circle (0.05cm);
        \draw (3,0) -- (2,0);
        \filldraw (3,0) circle (0.05cm);
        \node at (0,-0.3) {$3$};
        \node at (1,-0.3) {$3$};
        \node at (2,-0.3) {$3$};
        \node at (3,-0.3) {$3$};
        \end{tikzpicture}

        \end{multicols}
        \end{minipage}
        
    \vspace{1em}

    \caption{The non-Coxeter finite extended Coxeter diagrams}
    \label{tab:finiteshephards}

    \vspace{-1em}
\end{table}

\begin{table}[ht]
    \begin{tabular}{l|cl}
        & Shephard-Todd & \quad Coxeter \\ \hline
        $B_n(p,2)$ & $G(p,1,n)$ & $p[4]2[3]...[3]2$ \\
        $B_3(2,3)$ & $G_{26}$ &  $2[4]3[3]3$ \\
        $A_3(3)$ & $G_{25}$ & $3[3]3[3]3$ \\
        $A_4(3)$ & $G_{32}$ & $3[3]3[3]3[3]3$ \\
        $I_2(p,m,q)$ & various & $p[m]q$
    \end{tabular}
    \vspace{1em}

    \caption{Comparison of notation}
    \label{tab:shephardNotation}
\end{table}

\subsection{Connection to complex polytopes}
\label{sec:connectpoly}

We end this section by noting the deep relationship between the so-called regular complex polytopes and the (finite) Shephard groups, as this forms the basis for most of our arguments. First, we recall the definitions. In many ways they are identical to (reasonable) definitions of regular polytopes in $\mathbb{R}^d$, with the only substantial and necessary differences made to accomodate the fact that complex ``lines'' have real dimension 2. A standard reference for this material is \cite{1975regular}.

We introduce common terminology. Let $\mathcal P$ be any collection of affine subspaces of $\mathbb{C}^d$ (which may include the ``empty subspace'' $\varnothing$ and the whole space).  
An \emph{$n$-face} of $\mathcal P$ is an $n$-dimensional element of $\mathcal P$ (by convention, $\mathrm{dim}(\varnothing) = -1$). 
Sometimes we call $0$-faces the vertices of $\mathcal P$ and $1$-faces the edges of $\mathcal P$.
We say two faces are incident if they are distinct and one contains the other as a subspace.
If $F_1 \subseteq F_2$ are incident faces and $\dim(F_1) \leq \dim(F_2) - 2$, then the collection of all $F'$ with $F_1 \subseteq F' \subseteq F_2$ is called the \emph{medial figure} of $F_1$ and $F_2$. 
A symmetry of $\mathcal P$ is a unitary map of $\mathbb{C}^d$ which fixes the union of the elements of $\mathcal P$ as a set. The collection of all such symmetries is the \emph{symmetry group} of $\mathcal P$. Note that a symmetry necessarily preserves incidence.

\begin{defn}
    We call a collection $\mathcal P$ of affine subspaces of $\mathbb{C}^d$ a \emph{regular complex ($n$-)polytope} if it satisfies the following axioms.
    \begin{enumerate}
        \item $\varnothing \in \mathcal P$.
        \item There is unique $n$-face of $\mathcal P$ and all elements of $\mathcal P$ are contained in this $n$-face. 
        \item Every medial figure contains at least two faces of each (appropriate) dimension.
        \item Every medial figure of an $i$-face with an $j$-face with $i \leq j-3$ is connected.
        \item Its symmetry group acts simply transitively on the set of maximal chains (linearly ordered sets) of faces.
    \end{enumerate}
\end{defn}

A standard result is the following.

\begin{thm}
    If $\mathcal P$ is a regular complex polytope, then its symmetry group is a finite Shephard group with diagram a straight line (meaning the valence of exactly two of the vertices is $1$ and all other vertices have valence $2$). Conversely, if $\Gamma$ is a straight-line extended Coxeter diagram then $G_\Gamma$ is the symmetry group of a regular complex polytope. 
\end{thm}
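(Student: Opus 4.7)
The plan is to follow the classical flag-transitivity approach of Coxeter \cite{1975regular}. For the forward direction, fix a regular complex $n$-polytope $\mathcal{P}$ with symmetry group $G$ and pick a maximal chain $\Omega = (F_{-1}, F_0, \ldots, F_n)$, the \emph{base flag}, with $F_{-1} = \varnothing$ and $F_n$ the top face. For each $i \in \{0, \ldots, n-1\}$, the medial figure of $F_{i-1}$ and $F_{i+1}$ is itself a regular complex $1$-polytope (a set of $p_i \geq 2$ points on a complex line), whose symmetry group is cyclic of order $p_i$. Combining this with simple transitivity (axiom (5)) yields a unique element $s_i \in G$ fixing every $F_j$ with $j \neq i$ and acting as a generator of this cyclic group. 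A standard flag-connectedness argument then gives $G = \langle s_0, \ldots, s_{n-1}\rangle$.

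Next I would extract the relations. By construction $s_i^{p_i} = 1$. The medial figure of $F_{i-2}$ with $F_{i+1}$ is a regular complex $2$-polytope whose symmetry group is $\langle s_{i-1}, s_i\rangle$, yielding a braid-type relation $\mathrm{prod}(s_{i-1}, s_i; m) = \mathrm{prod}(s_i, s_{i-1}; m)$ for some $m = m_{i-1,i} \geq 3$. For $|i-j| \geq 2$, axiom (4) together with the product decomposition of the medial figure of $F_{i-1}$ and $F_{j+1}$ gives $s_i s_j = s_j s_i$, i.e., no edge in the diagram between $i$ and $j$. This produces a surjection $G_\Gamma \twoheadrightarrow G$ with $\Gamma$ a straight-line extended Coxeter diagram. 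To upgrade this to an isomorphism I would count: the number of maximal chains of $\mathcal{P}$ equals $|G|$ by simple transitivity, and a standard induction on rank identifies this count with $|G_\Gamma|$ (alternatively, appeal to the Shephard--Todd classification to place $G$ among the groups of Tables~\ref{tab:finitediagrams}--\ref{tab:finiteshephards}).

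For the converse, take a straight-line extended Coxeter diagram $\Gamma$ and use the representation $\rho \colon G_\Gamma \to GL_n(\mathbb{C})$ of Section~\ref{sec:finiteshephard}, which is faithful and irreducible by Proposition~\ref{prop:finimpliespos} and Lemma~\ref{lem:kernelofH}. For each $i$ let $V_i \subseteq \mathbb{C}^n$ be the fixed subspace of the parabolic $\langle s_{i+1}, \ldots, s_{n-1}\rangle$, pick a generic base vertex $v_0 \in V_0$, and define the base $i$-face $F_i$ to be the affine span of the $\langle s_0, \ldots, s_{i-1}\rangle$-orbit of $v_0$ (with $F_{-1} = \varnothing$ and $F_n = \mathbb{C}^n$). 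Let $\mathcal{P}$ be the collection of $G_\Gamma$-translates of these $F_i$. Axioms (1)--(2) are immediate. Axiom (3) follows because each two-generator subgroup $\langle s_{i-1}, s_i\rangle$ is a nontrivial finite rank-$2$ complex reflection group of type $I_2(p_{i-1}, m_{i-1,i}, p_i)$. Axiom (4), for $j - i \geq 3$, reduces to the direct-product decomposition $\langle s_0, \ldots, s_{i-1}\rangle \times \langle s_{j+1}, \ldots, s_{n-1}\rangle$ forced by the straight-line shape. Axiom (5) follows by verifying that the stabilizer of the base flag is trivial: such an element fixes each $V_i$, and these collectively span enough of $\mathbb{C}^n$ to force triviality.

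The main obstacle is the matching of orders and relations in both directions. In the forward direction, one must confirm that the pairs $(p_i, m_{i,i+1})$ extracted from the medial figures realize a genuine finite-Shephard straight-line diagram and that the surjection $G_\Gamma \twoheadrightarrow G$ has trivial kernel; this is cleanest via a cardinality count. In the converse, rigorously verifying axiom (4) requires careful tracking of how the direct-product decomposition of distant parabolics translates into connectedness of medial figures in the constructed $\mathcal{P}$, which hinges delicately on the straight-line structure of $\Gamma$.
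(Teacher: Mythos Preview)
The paper does not prove this theorem at all; immediately after the statement it writes ``This is detailed in \cite[\S 12.1]{1975regular}'' and moves on. So there is no ``paper's own proof'' to compare against---the result is quoted as a standard fact from Coxeter's book.

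Your sketch is a reasonable outline of the classical argument that one would find in that reference: extract generators from the base flag via medial figures, read off the braid and commuting relations from rank-$2$ and rank-$\geq 3$ medial figures respectively, and for the converse build the polytope as the orbit of a suitably chosen base chain under the canonical unitary representation. You have correctly identified the delicate points (order matching via flag counting, and verifying the connectivity axiom (4) in the converse), though your treatment of them is only a gesture rather than a proof. In particular, the claim that the surjection $G_\Gamma \twoheadrightarrow G$ is an isomorphism ``by counting'' hides real work---one needs either the full Shephard--Todd classification or an independent inductive argument, and the paper elsewhere (Proposition~\ref{prop:shepdevel}) leans on exactly this kind of medial-figure induction. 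Similarly, in the converse your verification of axiom (5) is too brisk: showing the base-flag stabilizer is trivial requires knowing that the parabolic subgroups $\langle s_0,\dots,s_{i-1}\rangle$ have the expected fixed-point sets, which is not automatic from the representation alone.

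For the purposes of this paper none of this matters, since the theorem is imported wholesale from \cite{1975regular}; your proposal would be appropriate as an expository expansion but is not needed here.
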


This is detailed in \cite[\S 12.1]{1975regular}. This is not a 1-1 correspondence, as there are ``starry'' and ``non-starry'' polytopes that can be constructed for a given Shephard group. From here on, when we say ``regular complex polytope'' we always mean the non-starry polytope. With this restriction, the above does give a 1-1 correspondence (modulo taking dual polytopes).

We highlight specifically how a Shephard group acts on its corresponding polytope. 
Let $\Gamma$ be a connected extended Coxeter diagram which is linear and has vertices named $1,\dots,n$ from left to right (or right to left) with $\{s_i:i=1,\dots,n\,\}$ the generators of $G_\Gamma$.
We fix an arbitrary maximal chain $F_{-1} < F_0 < F_1 < \dots < F_n < F_{n+1}$ (where $F_{-1} = \varnothing$ and $F_{n+1}$ is the top-dimensional face of $\mathcal P$) sometimes denoted by $\mathcal F_0$ and called the \emph{base chain}\footnote{Some authors call $\mathcal F_0$ the ``base flag''---we use ``chain'' rather than ``flag'' as to not confuse the complex of chains with the notion of ``flag complex'' for an arbitrary simplicial complex.}.
A generator $s_i$ acts by fixing the elements $F_j$ for $j \neq i$, and cyclically permuting the $i$-faces which are incident to each $F_j$, $j \neq i$. Moreover,
the stabilizer of the face $F_i$ is the subgroup of $G$ generated by $\widehat{s_i} \coloneqq S \setminus \{s_i\}$.
The following clarifies the properties of this action further.

\begin{lemma} \label{lem:shepacting}
\ \emph{\cite[Lemma 12]{ORS2000TheSR}}
    Let $F_{i_1} \subseteq F_{i_2} \subseteq \dots \subseteq F_{i_k}$ be a subchain of $\mathcal F_0$, with $T = \{s_{i_1}, \dots, s_{i_k}\}$.
    The stabilizer of this chain is $G_{\widehat T}$, where $\widehat T = S \setminus T$.
\end{lemma}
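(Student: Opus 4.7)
The plan is to prove the two inclusions separately, working throughout with the simply transitive action of $G_\Gamma$ on the maximal chains of the regular complex polytope $\mathcal P$.

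First I would handle the easy inclusion $G_{\widehat T} \subseteq \mathrm{Stab}(F_{i_1}\subseteq\cdots\subseteq F_{i_k})$. By the description of generator actions immediately preceding the lemma, each $s_j$ with $j \notin \{i_1,\ldots,i_k\}$ fixes every face $F_\ell$ of the base chain $\mathcal F_0$ with $\ell \neq j$. In particular such an $s_j$ fixes each $F_{i_\ell}$, so every word in $\widehat T$ stabilizes the entire subchain.

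For the reverse inclusion, let $g \in \mathrm{Stab}(F_{i_1}\subseteq\cdots\subseteq F_{i_k})$. Then $g\mathcal F_0$ is a maximal chain of $\mathcal P$ containing the fixed subchain at the correct positions $i_1,\ldots,i_k$. If I can produce $h \in G_{\widehat T}$ with $h\mathcal F_0 = g\mathcal F_0$, then simple transitivity of $G_\Gamma$ on maximal chains forces $g = h \in G_{\widehat T}$. So the task reduces to showing that $G_{\widehat T}$ acts transitively on the set of maximal chains extending $F_{i_1}\subseteq\cdots\subseteq F_{i_k}$. To exhibit such an $h$, I adopt the convention $i_0 = -1$ and $i_{k+1} = n+1$, so that for each $\ell \in \{0,1,\ldots,k\}$ the entries of $g\mathcal F_0$ in positions $i_\ell+1,\ldots,i_{\ell+1}-1$ form a maximal chain inside the medial figure of $F_{i_\ell}$ and $F_{i_{\ell+1}}$. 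By the theory of regular complex polytopes (\cite[\S 12]{1975regular}), this medial figure is itself a regular complex polytope whose symmetry group is exactly the straight-line sub-Shephard group $G_\ell := \langle s_{i_\ell+1},\ldots,s_{i_{\ell+1}-1}\rangle$, and this sub-symmetry group acts simply transitively on the maximal chains of the medial figure. Hence for each $\ell$ there exists $h_\ell \in G_\ell$ carrying the piece of $\mathcal F_0$ in the $\ell$-th gap to the piece of $g\mathcal F_0$. Since $\Gamma$ is a straight-line diagram, removing the vertices $i_1,\ldots,i_k$ partitions the remaining vertex set into intervals, and generators drawn from distinct intervals are non-adjacent in $\Gamma$, so they commute pairwise (as $m_{ij}=2$ whenever $s_i,s_j$ are not joined by an edge). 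Consequently the subgroups $G_0,\ldots,G_k$ commute elementwise, and the product $h := h_0 h_1 \cdots h_k$ lies in $G_{\widehat T}$ and satisfies $h\mathcal F_0 = g\mathcal F_0$, as required.

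The main obstacle is the structural claim about the medial figure: that it is again a regular complex polytope with symmetry group equal to the corresponding straight-line sub-Shephard group $G_\ell$, acting simply transitively on the maximal chains of the medial figure. This is the complex analogue of the ``residue'' structure for buildings and the ``link'' structure for classical regular polytopes, and I would invoke \cite[\S 12]{1975regular} as a black box rather than reprove it. The simple transitivity of $G_\ell$ on maximal chains of the medial figure is then inherited from (and consistent with) the simple transitivity of $G_\Gamma$ on maximal chains of $\mathcal P$, once the symmetry group of the medial figure has been identified with $G_\ell$.
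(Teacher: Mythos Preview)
The paper does not supply its own proof of this lemma; it is quoted with the citation \cite[Lemma 12]{ORS2000TheSR} and used as a black box. So there is nothing in the paper to compare your argument against directly.

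Your argument is correct. The easy inclusion follows immediately from the stated action of the generators, and for the reverse inclusion your strategy---reducing to transitivity of $G_{\widehat T}$ on maximal chains extending the fixed subchain, then producing the required element as a product of pieces $h_\ell$ drawn from the commuting medial-figure symmetry groups---is exactly right. The key point that each $h_m$ with $m \neq \ell$ fixes the base-chain faces $F_j$ lying in the $\ell$-th gap (because every generator appearing in $h_m$ has index outside that gap) is what makes the product $h_0\cdots h_k$ send $\mathcal F_0$ to $g\mathcal F_0$ slotwise; you might state this explicitly rather than leave it implicit in the commuting claim.

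The structural input you invoke about medial figures---that the medial figure between $F_{i_\ell}$ and $F_{i_{\ell+1}}$ is itself a regular complex polytope with symmetry group the straight-line sub-Shephard group $G_\ell$, acting simply transitively on its maximal chains---is precisely the content the paper itself appeals to in the proof of Proposition~\ref{prop:shepdevel}, attributed there to \cite{1975regular}. So your black-box citation is consistent with how the paper treats this material.
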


We will call a $G$-translate of such a subchain a ``chain of type $\widehat T$'', since the stabilizer is a conguate of the subgroup of $G$ generated by $\widehat T$.

The following clarifies that these stabilizers are themselves Shephard groups generated by subdiagrams of $\Gamma$.

\begin{prop} \label{prop:shepdevel}
    Let $\Gamma$ be a finite extended Coxeter diagram on vertex set $I$ with $S = \{\, s_i : i \in I\,\}$. Let $T \subseteq S$. Then $G_T \cong G_{\Gamma(T)}$ via the natural map discussed in the introduction.
\end{prop}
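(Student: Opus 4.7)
The natural map $\phi: G_{\Gamma(T)} \to G_T$ sending each generator of $G_{\Gamma(T)}$ to its namesake in $G_\Gamma$ is well-defined (the defining relations of $G_{\Gamma(T)}$ form a subset of those of $G_\Gamma$) and surjective by construction. Since $G_\Gamma$ is finite, so is $G_T$, and injectivity of $\phi$ is equivalent to an equality of orders. The plan is to split into two cases using the classification in Theorem \ref{thm:classifyfinite}: if $\Gamma$ is a (finite) Coxeter diagram (all $p_i = 2$), then $G_\Gamma = W_\Gamma$ and the statement reduces to the classical theorem that every standard parabolic subgroup of a Coxeter system $(W,S)$ is itself a Coxeter group on the chosen subset of generators with the inherited relations (e.g.,\ \cite[Ch.~IV, \S 1.8, Thm.~2]{bourbaki2008lie}).

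Otherwise $\Gamma$ is non-Coxeter and finite, so by Table \ref{tab:finiteshephards} it is a straight line. Index its vertices $1,\dots,n$ consecutively along the line and let $\mathcal{P}$ be the associated regular complex polytope with fixed base chain $\mathcal{F}_0$. Write $\widehat{T} = S \setminus T = \{s_{j_1},\dots,s_{j_m}\}$ with $j_1 < \dots < j_m$, and set $j_0 = -1$, $j_{m+1} = n+1$. By Lemma \ref{lem:shepacting}, $G_T$ is precisely the stabilizer in $G_\Gamma$ of the subchain $F_{j_1} < \dots < F_{j_m}$ of $\mathcal{F}_0$. This subchain partitions $\mathcal{F}_0$ into $m+1$ medial figures $[F_{j_\ell}, F_{j_{\ell+1}}]$ for $\ell = 0,\dots,m$. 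Each medial figure is itself a regular complex polytope \cite[\S 12]{1975regular} whose symmetry group is the Shephard group of the consecutive sub-line of $\Gamma$ on the vertex set $\{j_\ell + 1,\dots,j_{\ell+1} - 1\}$. Because $G_T$ acts simply transitively on the maximal chains of $\mathcal{P}$ extending the fixed subchain, and such extensions factor independently across the medial figures, $G_T$ decomposes as the direct product of these symmetry groups. As the sub-lines $\{j_\ell + 1,\dots,j_{\ell+1} - 1\}$ are exactly the connected components of $\Gamma(T)$, and $G_{\Gamma(T)}$ splits as the direct product of the Shephard groups of its components, we conclude $G_T \cong G_{\Gamma(T)}$, and this isomorphism is witnessed by $\phi$.

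The main task is to verify the structural claim: that each medial figure $[F_{j_\ell}, F_{j_{\ell+1}}]$ is a regular complex polytope whose symmetry group is the Shephard group of the ``middle piece'' of $\Gamma$ between $s_{j_\ell}$ and $s_{j_{\ell+1}}$, and that the full subchain stabilizer decomposes as the direct product of these symmetry groups. Both assertions are standard in the theory of regular complex polytopes \cite{1975regular}, but the endpoint medial figures ($\ell = 0$ and $\ell = m$) require a bit of care, as they degenerate to the face poset of $F_{j_1}$ itself and to the ``co-figure'' of $F_{j_m}$ inside $\mathcal{P}$, respectively. Once these identifications are made precise, the decomposition $G_T \cong \prod_\ell G_{\Gamma(\text{component}_\ell)} \cong G_{\Gamma(T)}$ is immediate.
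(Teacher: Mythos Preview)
Your proposal is correct and follows essentially the same route as the paper: reduce to the classical Coxeter case, use the classification to get a straight-line diagram in the non-Coxeter case, identify $G_T$ via medial figures of the base chain (whose symmetry groups are the Shephard groups of the connected components of $\Gamma(T)$), and assemble the direct product. The only organizational difference is that the paper first treats the case where $\Gamma(T)$ is connected (a single medial figure between $F_{i-1}$ and $F_{j+1}$) and then invokes the direct product, whereas you handle general $T$ in one pass by fixing the subchain indexed by $\widehat{T}$ and reading off all the medial figures at once; your explicit caution about the endpoint medial figures is a nice touch the paper leaves implicit.
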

\begin{proof}
    We give a rough outline of the argument which is implicit throughout \cite{1975regular}. 

    If $\Gamma$ is Coxeter the result is well known, so suppose it is one of the diagrams in Table \ref{tab:finiteshephards}. 
    Let $\Gamma'$ be a connected subdiagram. 
    Name the vertices of $\Gamma$ left to right as $1,\dots,n$, and let $T = \{s_i, s_{i+1},\dots,s_j\}$ for some $i \leq j$.
    Let 
    $\mathcal P_{i,j}$ denote the medial figure of $F_{i-1}$ and $F_{j+1}$. 
    A medial figure of a regular complex polytope from an $(i-1)$-face to a $(j+1)$-face is itself a regular complex polytope. Moreover, the type\footnote{We've glossed over this, but it is possible to combinatorially describe a polytope via an extended Coxeter diagram without reference to symmetry group, and this is what we mean by ``type''.} of this medial figure is the subdiagram of $\Gamma$ from $i$ to $j$, which in our case is $\Gamma(T)$. This implies that the symmetry group of $\mathcal P_{i,j}$ is $G_{\Gamma(T)}$. 
    But by the description of the action of $G_\Gamma$ given above, we see that $\mathcal P_{i,j}$ is the orbit of the base chain $\mathcal F_0$ under $G_T$ and, moreover, that this action is simply transitive. Thus $G_T$ is also the symmetry group of $\mathcal P_{i,j}$, and since the action of the generators of $G_T$ and $G_{\Gamma(T)}$ is identical, it can then be easily seen that $G_T \cong G_{\Gamma(T)}$.

    The case where $T$ is an arbitrary subset of $S$ follows immediately from seeing that $G_{\Gamma(T)}$ is a direct product of groups with connected diagram, and $G_T$ is an (internal) direct product of subsets of $S$ of the form considered above.
\end{proof}

Thus in the case when $G_\Gamma$ is finite, we freely identify $G_T$ and $G_{\Gamma(T)}$ and will refer to either as a (standard) parabolic subgroup. 

The following standard construction was used implicitly in the previous proof, and it will be useful to have the precise statement later. 

\begin{defn} \label{def:prodpoly}
    Let $\mathcal P_1, \mathcal P_2$ be regular complex polytopes in $\mathbb{C}^{d_1}$ and $\mathbb{C}^{d_2}$, respectively. Then the product polytope $\mathcal P = \mathcal P_1 \times \mathcal P_2$ is the collection of all subspaces $f_1 \times f_2 \subseteq \mathbb{C}^{d_1} \times \mathbb{C}^{d_2} \cong \mathbb{C}^{d_1d_2}$ with $f_i \in \mathcal P_i$, ordered by inclusion. Note that $e_1 \times e_2 \subseteq f_1 \times f_2$ if and only if both $e_i \subseteq f_i$.
\end{defn}

Repeated products are defined in the obvious way. We emphasize that for any subspace $V \subseteq \mathbb{C}^n$, we always have $\varnothing \times V = V \times \varnothing = \varnothing$. In particular, if $\mathcal P_{> \varnothing}$ denotes the non-empty faces of $\mathcal P$, then $\mathcal P_{> \varnothing} \times \mathcal Q_{> \varnothing} = (\mathcal P \times \mathcal Q)_{> \varnothing}$.

\section{Complexes for Shephard groups}
\label{sec:complexes}

Now we describe two complexes for a given Shephard group based on familiar definitions for Coxeter groups and Artin groups.

First, we introduce common notation: if $\Psi$ is a cell complex, we denote the closed (resp. open) star of a vertex $v$ in $\Psi$ by $\mathrm{St}(v) = \mathrm{St}(v,\Psi)$ (resp. $\mathrm{st}(v) = \mathrm{st}(v,\Psi)$), and the boundary of a star is $\partial \mathrm{St}(v) = \partial \mathrm{St}(v,\Psi) = \mathrm{St}(v) \setminus \mathrm{st}(v)$. The link of $v$ in $\Psi$ is denoted $lk(v) = lk(v,\Psi)$.

\subsection{A complex for finite Shephard groups}
In this section, let $\Gamma$ be an extended Coxeter diagram such that $G = G_\Gamma$ is finite. 
In \cite{orlik1990milnor}, an analogue of the Coxeter complex is defined for Shephard groups, there called the \emph{Milnor fiber complex}.
This complex is one of the main objects of study of this paper.
For non-Coxeter finite Shephard groups, it acts as a generalization of the Coxeter complex for reasons we explain shortly. 
We briefly recall the definition before restating it in terms of complexes of groups. (The original definition is given in terms of invariant polynomials, but is equivalent to the following definition in terms of polytopes by \cite[Thm.~5.1]{orlik1990milnor}.)

Suppose $\Gamma$ is a straight line diagram (recall: $\Gamma$ is connected and all vertices are valence 2 except for two unique vertices of valence 1). Let $\mathcal P$ be the (unique non-starry) regular complex $n$-polytope with $G_\Gamma$ as its symmetry group.
Denote by $\mathcal P'_p$ the derived complex of the subposet $\mathcal P_p$ of $\mathcal P$ consisting of the \emph{proper} faces of $\mathcal P$; that is, the set of chains (linearly ordered subsets) of $\mathcal P$ excluding $\varnothing$ and $\mathbb{C}^n$ (the unique $n$-dimensional face of $\mathcal P$). This is an abstract simplicial complex, and thus has a geometric realization, which we will call the \emph{Milnor fiber complex} or \emph{extended Coxeter complex} $\widehat \Theta = \widehat \Theta_\Gamma = \widehat \Theta(\Gamma)$. Note that if $\Gamma$ is a (straight line) Coxeter diagram then this is isomorphic to the usual Coxeter complex. 
The $n$-simplices of $\widehat \Theta$ will be denoted
\[
    [ f_0 <  f_1 < \dots < f_n]
\]
where $f_0 <  f_1 < \dots < f_n$ is a chain of faces of $\mathcal P$. To rephrase the definition of the derived complex, the vertices of $\widehat \Theta$ correspond to (proper) faces of $\mathcal P$, and a collection of vertices span a simplex if and only if they are linearly nested (which in turn happens if and only if they are pairwise nested, implying $\widehat \Theta$ is a flag complex). Thus $\widehat \Theta$ inherits an action of $G$ from its action on $\mathcal P$. There is a ``base chamber'' $\mathcal C_0$ of $\widehat \Theta$ corresponding to the base chain $\mathcal F_0$. If $F_{i_1} < F_{i_2} < \dots < F_{i_k}$ is a subchain of $\mathcal F_0$, with $T = \{s_{i_1}, \dots, s_{i_k}\}$, then we call a $G$-translate of 
\[
    [F_{i_1} < F_{i_2} < \dots < F_{i_k}]
\]
a \emph{simplex of type $\widehat T$}.
Since $G$ acts simply transitively on the set of chains of $\mathcal P$ by assumption, $\mathcal C_0$ is a strict fundamental domain for the action of $G$ on $\widehat \Theta$. 
By Lemma \ref{lem:shepacting}, the stabilizer of a simplex of type $\widehat T$ contained in the base chamber is $G_{\widehat T}$. This implies that the stabilizer of a simplex of type $\widehat T$ is a conjugate of $G_{\widehat T}$.

We can rephrase the above in the language of complexes of groups as follows. 
\begin{defn}
Let $\Gamma$ be any finite extended Coxeter diagram. Define $\Delta = \Delta_\Gamma = \Delta_S$ to be a simplex whose vertices are labeled by the generators $S$ of $G_\Gamma$ (equiv., the vertices of $\Gamma$). 
For $T \subseteq S$, let $\sigma_{T}$ denote the face of $\Delta_S$ spanned by the elements of $T$.
We define a complex of groups $\widehat{\mathcal G} = \widehat{\mathcal G}(G_\Gamma, \Delta_\Gamma)$ by declaring the local group at the face $\sigma_{T}$ to be the group $G_{\widehat T}$. (Recall $\widehat T \coloneqq S \setminus T$.)
The edge maps are the standard maps induced by the inclusion of generating sets.
    
\end{defn}

This is a simple complex of groups, and hence its fundamental group is given by the direct limit of the system of edge maps \cite[Def.~II.12.12]{bridson2013metric}. Clearly by our definitions, in this setting the fundamental group of $\widehat{\mathcal G}$ is $G_\Gamma$. 

By our discussion above, if $\Gamma$ is a straight line, this complex is developable with development $\widehat \Theta = \widehat \Theta_\Gamma$. 
If $\Gamma$ is a Coxeter diagram, then this definition coincides with the usual definition of the Coxeter complex of $W_\Gamma$, and hence is also developable. Thus $\widehat{\mathcal G}$ is developable for any finite Shephard group.
Sometimes when $\Gamma$ is a (non-extended) Coxeter diagram, we write $\widehat \Sigma(\Gamma) = \widehat \Sigma_\Gamma = \widehat \Theta_\Gamma$ and call $\widehat \Sigma_\Gamma$ the \emph{Coxeter complex} of $\Gamma$. It is well known that if $\Gamma$ is a Coxeter diagram, then $\widehat \Sigma_\Gamma$ possesses a natural metric under which it is isometric to a sphere. 
We discuss metrics on $\widehat \Theta_\Gamma$ for non-Coxeter $\Gamma$ in a later section.

Before concluding, we present an observation which will be of use later.

\begin{lemma} \label{lem:linkofcoxcomp}
    Let $\Gamma$ be a finite extended Coxeter diagram and $\delta$ a simplex of type $\widehat T$. Then
    \begin{align*}
        lk(\delta, \widehat \Theta) \cong \widehat \Theta(\Gamma(\widehat T)) = \widehat \Theta(\Gamma \setminus \Gamma(T)),
    \end{align*}
    where for a subdiagram $\Gamma'$ of $\Gamma$, we let $\Gamma \setminus \Gamma'$ denote the subdiagram of $\Gamma$ on vertex set $\mathrm{Vert}(\Gamma) \setminus \mathrm{Vert}(\Gamma')$.
\end{lemma}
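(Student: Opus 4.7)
The plan is to prove the isomorphism by computing the link directly from the complex-of-groups description of $\widehat \Theta_\Gamma$. Since $G_\Gamma$ acts transitively on the simplices of each type, I may assume without loss of generality that $\delta$ is (a lift of) the face $\sigma_T$ of the fundamental domain $\Delta_\Gamma = \Delta_S$; its stabilizer is then the local group $G_{\widehat T}$, which by Proposition \ref{prop:shepdevel} is canonically isomorphic to $G_{\Gamma(\widehat T)}$.

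Next I would identify the link complex of groups at $\sigma_T$. Because $\Delta_S$ is itself a simplex, the link of $\sigma_T$ in $\Delta_S$ is the simplex $\Delta_{\widehat T}$ on vertex set $\widehat T$. A face $\sigma_{T'}$ with $T' \subseteq \widehat T$, viewed inside $\Delta_{\widehat T}$, corresponds to the face $\sigma_{T \cup T'}$ of $\Delta_S$, so the local group inherited from $\widehat{\mathcal G}(G_\Gamma, \Delta_\Gamma)$ along this face is
\[
    G_{\widehat{T \cup T'}} \;=\; G_{\widehat T \setminus T'}.
\]
This matches exactly the defining prescription of local groups in $\widehat{\mathcal G}(G_{\widehat T}, \Delta_{\widehat T})$. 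Using Proposition \ref{prop:shepdevel} once more to identify $G_{\widehat T}$ with $G_{\Gamma(\widehat T)}$, together with the corresponding identification of standard generators, the induced link complex of groups at $\sigma_T$ is canonically isomorphic to $\widehat{\mathcal G}(G_{\Gamma(\widehat T)}, \Delta_{\Gamma(\widehat T)})$.

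Finally, I would invoke the standard fact (see, e.g., \cite[Ch.~II.12]{bridson2013metric}) that for a developable simple complex of groups the link of a lift of a simplex $\sigma$ in the development is itself the development of the link complex of groups at $\sigma$, acted on by the stabilizer of $\sigma$. Applied to $\delta = \sigma_T$, this says that $lk(\delta, \widehat \Theta_\Gamma)$ is the development of $\widehat{\mathcal G}(G_{\Gamma(\widehat T)}, \Delta_{\Gamma(\widehat T)})$, which is by definition $\widehat \Theta(\Gamma(\widehat T))$, giving the desired isomorphism.

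The main obstacle I anticipate is essentially bookkeeping: verifying that the inherited local groups \emph{and} edge maps along the link face agree with those of the intrinsic complex of groups for $\Gamma(\widehat T)$, and checking equivariance of the identification under the stabilizer action. No new geometric input is required beyond Proposition \ref{prop:shepdevel} and the general link theorem for developable complexes of groups. For straight-line $\Gamma$ one could alternatively argue directly via polytopes: a chain that joins with $[F_{i_1} < \cdots < F_{i_k}]$ decomposes into sub-chains lying in the medial figures determined by consecutive indices (and by the endpoints $F_{-1}$ and $F_{n+1}$), and each such medial figure is a regular complex polytope whose diagram is a connected component of $\Gamma(\widehat T)$; one then assembles the link as the corresponding join of Milnor fiber complexes, which is $\widehat \Theta(\Gamma(\widehat T))$.
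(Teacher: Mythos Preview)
Your argument is correct, but it takes a different route from the paper. The paper argues directly with the polytope: for a single vertex $v$ of type $\hat s$, the vertices of $lk(v,\widehat\Theta)$ are exactly the proper faces of $\mathcal P$ incident to the face $v$, and these split into those contained in $v$ and those containing $v$; each family is the face poset of a medial figure of $\mathcal P$, giving the two connected pieces of $\widehat\Theta(\Gamma(\hat s))$. For an arbitrary simplex $\delta$ of type $\widehat T$, the paper then writes $lk(\delta,\widehat\Theta)$ as the intersection $\bigcap_{s\in T} lk(v_s,\widehat\Theta)$ and computes this to be $\widehat\Theta(\Gamma(\widehat T))$. This is essentially the ``medial figure'' argument you sketch at the end as an alternative for straight-line $\Gamma$.

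Your primary approach via the link complex of groups is more uniform: it handles all finite $\Gamma$ at once without having to treat the Coxeter (branched) case separately, and it cleanly identifies the link as a development rather than assembling it from medial figures. The cost is that you must import the Bridson--Haefliger link/local-development theorem as a black box, whereas the paper's polytope computation is entirely self-contained. The bookkeeping obstacle you flag (matching the inherited local groups and edge maps with those of $\widehat{\mathcal G}(G_{\Gamma(\widehat T)},\Delta_{\Gamma(\widehat T)})$) is real but routine, and Proposition~\ref{prop:shepdevel} is exactly what makes the identification go through.
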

\begin{proof}
    First consider a vertex $v$ of type $\hat s$.
    The vertex set of $lk(v, \widehat \Theta)$ is the collection of all vertices of $\widehat \Theta$ which are joined to $v$ by an edge, and incidence in the link is inherited from incidence in $\widehat \Theta$. 
    These vertices are the faces of $\mathcal P$ which are incident to the face $v$. Thus there are two types of faces, those containing $v$ and those contained by $v$. If a face contains $v$, then it's in the medial polytope between $v$ and $\mathbb{C}^d$ which forms one component of $\widehat\Theta(\Gamma(\hat s))$. If a face is contained in $v$, then it's in the medial polytope between $\varnothing$ and $v$, which forms the other component of $\widehat\Theta(\Gamma(\hat s))$. Thus $lk(v, \widehat \Theta) \cong \widehat \Theta(\Gamma(\widehat s))$.

    Now let $\delta$ be any simplex of type $\widehat T$. The link of $\delta$ is 
    \begin{align*}
        lk(\delta,\widehat \Theta) = \bigcap_{\substack{v \in \delta \\ v \text{ a vertex}}} lk(v,\widehat \Theta),
    \end{align*}
    with intersection interpreted on the level of posets.
    A vertex $v'$ of $\widehat \Theta$ is in each $lk(v,\widehat \Theta)$ if and only if it is a face in each $\widehat \Theta(\Gamma(\widehat s))$, $s \in T$, under the identification made in the previous paragraph. It is easy to see that this holds if and only if 
    \begin{align*}
        v' &\in \widehat \Theta\left( \bigcap_{s \in T}\Gamma(\widehat s) \right) \\
        & =\widehat \Theta\left( \Gamma\left(\bigcap_{s \in T}\widehat s\right) \right) \\
        &= \widehat \Theta(\Gamma(\widehat T)). \qedhere
    \end{align*}
\end{proof}

\subsection{A complex for arbitrary Shephard groups}

Throughout this section, let $\Gamma$ be any extended Coxeter diagram on vertex set $I$ with $S = \{s_i:i \in I\,\}$ 
the standard generators for $W = W_\Gamma$ and $G = G_\Gamma$.

The following definition is based on the definition of the ``modified Coxeter/Deligne complexes'' of \cite{charney1995k}.

\begin{defn}  \label{def:complexofshephard}
    Let $K = K_\Gamma = | (\mathcal S^{fs}_\Gamma )'|$, where $(\mathcal S^{fs}_\Gamma )'$ denotes the derived complex of $\mathcal S^{fs}_\Gamma$ and $|(\mathcal S^{fs}_\Gamma)'|$ is its geometric realization. 
    An $n$-simplex of $K$ is denoted
    \[
        [T_0 < T_1 < \dots < T_n]
    \] 
    for a chain $T_0 < T_1 < \dots < T_n$ with each $T_i \in \mathcal S^{fs}$. 
    In particular, the vertices are indexed by elements $T \in \mathcal S^{fs}$; we let $v_{T} = [T]$ denote the vertex of $K$ coming from $T$.
    We define a complex of groups $\mathcal G = \mathcal G(G_\Gamma,K_\Gamma)$ over $K$ by declaring the local group at $v_{T}$ to be $G_{\Gamma(T)}$ and the edge maps to be the natural maps coming from the inclusion of generators. 
\end{defn}

As with the complex for the finite Shephard groups, this is also a simple complex of groups. Hence by \cite[Def.~II.12.12]{bridson2013metric}, the fundamental group of $\mathcal G$ is the direct limit over the edge maps, and when $\mathcal S^f = \mathcal S^{fs}$, this is clearly $G_\Gamma$.

If $\Gamma$ is a Coxeter diagram, this complex of groups is developable \cite{charney1995k}, with development typically denoted $\Sigma = \Sigma_\Gamma$, called the \emph{modified Coxeter complex} or the \emph{Davis complex}. More generally, if $\mathcal G$ is developable, we will denote its development $\Theta = \Theta(\Gamma) = \Theta_\Gamma$ and may appropriately refer to it as the \emph{modified extended Coxeter complex}, \emph{modified Milnor fiber complex}, or \emph{extended Davis complex}.

A priori, it is not known in general if $\mathcal G$ is developable for an arbitrary extended Coxeter diagram. The rest of the paper is dedicated to studying developability and, more specifically, non-negative curvature of $\mathcal G$, utilizing the following (paraphrased) lemma.

\begin{lemma} \emph{\cite[Thm.~II.12.28]{bridson2013metric}}
    If $\mathcal H$ is a (simple) complex of groups over a simply connected domain and the local development at each vertex is locally $\mathrm{CAT}(0)$, then $\mathcal H$ is developable and has locally $\mathrm{CAT}(0)$ development.
\end{lemma}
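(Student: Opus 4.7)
The plan is to prove developability first, and then deduce that the resulting development is locally $\mathrm{CAT}(0)$ by a link argument. The strategy is a variant of the Cartan--Hadamard philosophy, applied to the combinatorial setting of complexes of groups: although the global object may not yet exist, the local developments $\widetilde{\mathrm{St}}(v)$ at each vertex $v$ of the domain $Y$ are always defined, carry actions of the local groups $\mathcal H_v$, and (by hypothesis) are locally $\mathrm{CAT}(0)$ pointed spaces. Thus each passes to a genuine $\mathrm{CAT}(0)$ universal cover, and developability amounts to verifying that one can glue these local models into a single simply connected $\mathrm{CAT}(0)$ space on which $\pi_1(\mathcal H)$ acts.

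For developability, I would use the standard criterion that a simple complex of groups over $Y$ is developable if and only if the canonical homomorphisms from the local groups $\mathcal H_v$ into $\pi_1(\mathcal H)$ are injective. To establish injectivity I would argue as follows: each local development $\widetilde{\mathrm{St}}(v)$ is, by hypothesis, a locally $\mathrm{CAT}(0)$ space on which $\mathcal H_v$ acts properly with strict fundamental domain $\mathrm{St}(v, Y)$. Its universal cover is therefore $\mathrm{CAT}(0)$ by classical Cartan--Hadamard, and the deck action of $\mathcal H_v$ (or rather its relevant quotient) is free on top-dimensional cells. Then I would construct a candidate development $X$ by iteratively gluing translates of local developments across shared open subcomplexes, using the edge maps of $\mathcal H$ to identify overlaps. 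The gluing cocycle that obstructs consistency lives in $H^2$ of the nerve; simple connectivity of $Y$ kills these obstructions combinatorially, because any two paths of local-development identifications joining two vertices are related by a sequence of elementary homotopies across simplices of $Y$, and the local $\mathrm{CAT}(0)$ geometry supplies unique geodesic representatives that certify the identifications agree.

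Once $X$ exists, local non-positive curvature is almost automatic. For any vertex $\tilde v \in X$ lying over $v \in Y$, the link $lk(\tilde v, X)$ is naturally isomorphic to $lk(v, \widetilde{\mathrm{St}}(v))$, since $\widetilde{\mathrm{St}}(v)$ embeds as an open neighborhood of $\tilde v$ in $X$ (this is precisely the content of saying $\mathcal H_v$ injects into the stabilizer of $\tilde v$). By hypothesis this link is $\mathrm{CAT}(1)$, and the same argument at points in the interiors of higher-dimensional simplices is immediate since the local structure there is a product with a sphere. Applying the link characterization of locally $\mathrm{CAT}(0)$ piecewise-Euclidean (or piecewise-spherical) complexes, $X$ is locally $\mathrm{CAT}(0)$.

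The main obstacle is the gluing step: showing that the monodromy around a loop of identifications in $Y$ is trivial. Purely combinatorially, this requires reducing a nullhomotopy in $Y$ to a sequence of elementary moves across $2$-simplices and checking that the triangular identification equation holds after transport through the edge maps of $\mathcal H$. The local $\mathrm{CAT}(0)$ hypothesis enters precisely here: it provides unique geodesic normal forms inside each $\widetilde{\mathrm{St}}(v)$, which rigidify the otherwise ambiguous choices of identification and force the cocycle condition. This is the part that genuinely uses both hypotheses simultaneously, and is the technical heart of the argument (worked out in detail, e.g., in the Bridson--Haefliger framework).
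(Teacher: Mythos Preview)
The paper does not give a proof of this lemma; it is stated with a citation to \cite[Thm.~II.12.28]{bridson2013metric} and used as a black box. So there is no argument in the paper to compare against, and your task here was really to reproduce (or paraphrase) the Bridson--Haefliger proof.

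Your overall architecture is correct: developability for a simple complex of groups is equivalent to injectivity of the canonical maps $\mathcal H_v \to \pi_1(\mathcal H)$, and once the development exists, the link of any vertex in it coincides with the link in the corresponding local development, whence local $\mathrm{CAT}(0)$ follows immediately. That second half of your sketch is fine.

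The gap is in the first half. Your explanation of \emph{why} the local groups inject is not right. You describe a ``gluing cocycle'' obstruction living in $H^2$ of the nerve, killed by simple connectivity of $Y$, with the local $\mathrm{CAT}(0)$ hypothesis entering to ``rigidify identifications'' via unique geodesics. None of this is how the argument actually runs. For a \emph{simple} complex of groups over simply connected $Y$, there is no twisting cocycle to worry about: the fundamental group is literally the colimit of the local groups along the edge maps, and the only question is whether the structure maps into this colimit are injective. Simple connectivity of $Y$ is used to identify $\pi_1(\mathcal H)$ with this colimit, not to kill a cohomological obstruction; and unique geodesics in the local developments play no role in any monodromy computation. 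The genuine role of the curvature hypothesis is that each local development, being locally $\mathrm{CAT}(0)$ and a star (hence simply connected), is actually $\mathrm{CAT}(0)$ and in particular contractible. Bridson--Haefliger leverage this asphericity to build an aspherical complex of spaces realizing $\mathcal H$, from which injectivity of the local groups into $\pi_1$ follows by the standard homotopy-theoretic criterion (a developable complex of groups is one whose associated complex of classifying spaces is aspherical). If you want to reconstruct the argument, that is the mechanism to focus on, not a cocycle condition.
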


It is clear that $K$ is simply connected ($[\varnothing]$ is a cone point). So in order to make use of this, we need a metric on $K$, which we will discuss shortly.
First, we describe a cell structure on $K$ which is coarser than the given simplicial structure.
The following definitions again are based on \cite{charney1995k}.

\begin{defn}
    For $T \in \mathcal S^{fs}$, let 
    \begin{align*}
        \mathcal S^{fs}_{\geq T} &= \{\,T' \in \mathcal S^{fs}:T' \supseteq T  \,\} &
        \mathcal S^{fs}_{> T}    &= \{\,T' \in \mathcal S^{fs}:T' \supsetneqq T\,\} \\
        \mathcal S^{fs}_{\leq T} &= \{\,T' \in \mathcal S^{fs}:T' \subseteq T  \,\} &
        \mathcal S^{fs}_{< T}    &= \{\,T' \in \mathcal S^{fs}:T' \subsetneqq T\,\} \\
        F_T &= |(\mathcal S^{fs}_{\geq T})'| &
        F_T^\circ &= |(\mathcal S^{fs}_{\geq T})'| \setminus |(\mathcal S^{fs}_{> T})'| \\
        F_T^* &= |(\mathcal S^{fs}_{\leq T})'| &
        (F_T^*)^\circ &= |(\mathcal S^{fs}_{\leq T})'| \setminus |(\mathcal S^{fs}_{< T})'|.
    \end{align*}
    We sometimes call $F_T$ ($F_T^\circ$)  a \emph{face} (resp.~\emph{open face}) of $K$ and $F_T^*$ ($(F_T^*)^\circ$) a \emph{dual face} (resp.~\emph{open dual face}) of $K$.
\end{defn}

Notice that $F_T^*$ is combinatorially a cube---the faces of the cube are of the form $F_{T_1} \cap F_{T_2}^*$ where $T_1 \subseteq T_2 \subseteq T$. So $K$ itself has a cubical structure where the cubical faces are $F_{T_1} \cap F_{T_2}^*$ for $T_1 \subseteq T_2 \in \mathcal S^{fs}$. The following is a standard exercise.

\begin{prop}
    With this cell structure, $F_T^*$ is isomorphic to the cone on the barycentric subdivision $\Delta_T'$ of $\Delta_T$ with cone point $v_T$. The isomorphism is given by sending a vertex $v_{T'}$ to the barycenter of $\sigma_{T \setminus T'}$ in $\Delta_T$.
\end{prop}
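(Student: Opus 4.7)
The plan is to reduce the statement to the standard combinatorial fact that the order complex of the Boolean lattice $2^T$ is a cone on the barycentric subdivision of a simplex with vertex set $T$. The essential preliminary step is to recognize that $\mathcal{S}^{fs}_{\leq T}$ is the \emph{full} power set of $T$.

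First I would verify $\mathcal{S}^{fs}_{\leq T} = 2^T$: if $T \in \mathcal{S}^{fs}$ and $T' \subseteq T$, then by Proposition~\ref{prop:shepdevel} applied to the finite Shephard group $G_{\Gamma(T)}$ we have $G_{T'} \cong G_{\Gamma(T')}$, and since $G_{T'} \leq G_T \cong G_{\Gamma(T)}$ is a subgroup of a finite group, it is itself finite. Hence $T' \in \mathcal{S}^{fs}$, and therefore $F_T^* = |(2^T)'|$ is the order complex of the full power set of $T$.

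Next I would describe the promised isomorphism at the level of posets. The map $T' \mapsto T \setminus T'$ is an order-reversing involution of $2^T$ that identifies the vertex $v_{T'}$ with the face $\sigma_{T \setminus T'}$ of $\Delta_T$. A chain $[T_0 \subsetneq T_1 \subsetneq \cdots \subsetneq T_n]$ in $\mathcal{S}^{fs}_{\leq T}$ corresponds under this involution to a descending chain of faces $\sigma_{T \setminus T_0} \supsetneq \cdots \supsetneq \sigma_{T \setminus T_n}$ of $\Delta_T$. If $T_n \neq T$, then every $T \setminus T_i$ is non-empty, so the image is a chain of non-empty faces of $\Delta_T$, i.e.~a simplex of $\Delta_T'$. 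If $T_n = T$, then the chain terminates at $\sigma_\varnothing$, and reinterpreting this empty face as the cone point realizes the chain as a simplex of $\mathrm{cone}(\Delta_T')$ incident to the apex. This is exactly the decomposition of $\mathrm{cone}(\Delta_T')$ into simplices not containing versus containing the cone point.

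I do not expect any real obstacle; the result is essentially the observation that the order complex of the face poset of $\Delta_T$ (with $\varnothing$ included) is a cone on $\Delta_T'$, combined with a short bookkeeping argument to confirm that the cone point corresponds to $v_T$ under the convention $v_T \leftrightarrow \sigma_{T\setminus T} = \sigma_\varnothing$. The explicit vertex map $v_{T'} \mapsto \mathrm{bary}(\sigma_{T \setminus T'})$ then extends uniquely to a simplicial isomorphism because it arises from an order-reversing bijection of posets.
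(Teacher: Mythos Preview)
Your proposal is correct. The paper itself does not prove this proposition; it simply calls it ``a standard exercise'' and moves on. Your argument is exactly the kind of fleshing out one would expect for that exercise: the key observations that $\mathcal S^{fs}_{\leq T}=2^T$ (via Proposition~\ref{prop:shepdevel} and finiteness of subgroups) and that the order-reversing bijection $T'\mapsto T\setminus T'$ identifies $|(2^T)'|$ with the cone on $\Delta_T'$, sending $v_T$ to the cone point $\sigma_\varnothing$, are precisely what is needed. There is no alternative approach in the paper to compare against.
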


From here on, we identify $lk(v_T, F_T^*)$ and $\Delta_T$ in this way.
With this connection between $K$ and $\Delta_T$ for $T \in \mathcal S^{fs}$, we define the following metrics. 

\begin{defn}
    \begin{enumerate}
        \item (The cubical metric) Since the faces of $K$ are combinatorial cubes, metrize them to be standard Euclidean unit cubes. Under this metric, $\Delta_T$ is a spherical simplex with edge lengths all equal to $\pi/2$.

        \item (The Moussong metric) The definition of this metric is more involved and we refer to \cite[\S 4.4]{charney1995k} for details. 
        In summary, a cell $F_{T_1} \cap F_{T_2}^*$ is metrized to be a \emph{Coxeter block}, which is (the closure of) a connected component of the Coxeter zonotope associated to the finite Coxeter group $W_{T_2}$ minus its reflection hyperplanes. In this metric, if $T \in \mathcal S^{fs}$, then $\Delta_T$ is a spherical simplex where the length of the edge between the vertices corresponding to $s_i, s_j$ is $\pi - \pi_{m_{ij}}$. 
    \end{enumerate}
\end{defn}

We conclude by briefly recalling the definition of the local development as it applies here (our notation will differ slightly from \cite{bridson2013metric}).
In our case, the local development of $\mathcal G$ further relates $\mathcal G$ with $\widehat{\mathcal G}$.

Let $v_T$ be a vertex of $K$, with $T \in \mathcal S^{fs}$. The \emph{upper star} $St^T$ of $v_T$ in $\mathcal G$ is the (full) subcomplex of $K$ spanned by the vertices $v_{T'}$ with $T' \supseteq T$. 
The \emph{lower link} $Lk_T$ of $v_T$ in $\mathcal G$ is the development of the subcomplex of groups $\widehat {\mathcal G}(K_{<T})$ of $\mathcal G(K)$, where $K_{<T}$ denotes the subcomplex spanned by vertices $v_{T'}$ with $T' \subsetneq T$. 
Both of these objects are simplicial complexes which inherit the metric placed on $K$.
Then the \emph{local development at $v_T$} is (combinatorially) the join
\begin{align*}
    D(T) = St^T \ast Lk_T.
\end{align*}
Its metric naturally comes from the metric on $K$.
The link of $v_T$ in the local development is
\begin{align*}
    lk(v_T, D(T)) &= Lk^T \ast Lk_T,
\end{align*}
where $Lk^T$ is the \emph{upper link}, meaning the (full) subcomplex of $K$ spanned by the vertices $v_{T'}$ with $T' \supsetneq T$. We may also sometimes refer to this complex as $K_{>T}$.

Note that $K_{>T}$ is isomorphic to $lk(v_T, F_T)$ and $K_{<T}$ is isomorphic to $lk(v_T, F^*_T)$. 

We use the previous proposition to identify $K_{<T}$ with $\Delta_T$. With this identification, the complex of groups $\widehat{\mathcal G}(K_{<T})$ is isomorphic to $\widehat{\mathcal G}(G_T, \Delta_T)$ as defined before and thus $Lk_T$ is isomorphic to $\widehat \Theta_{\Gamma(T)}$. It is straightforward to check that the metrics placed on $K$ above agree with the claimed metrics on $\Delta_T$. This is summarized in the following

\begin{prop} \label{prop:linkdecomp}
    If $\Gamma$ is any extended Coxeter diagram, then in either metric, the link of a vertex $v_T$ in the local development of $\mathcal G(G_\Gamma, K_\Gamma)$ is isometric to the spherical join
    \[
        lk(v_T, F_T) * \widehat \Theta_{\Gamma(T)}.
    \]
\end{prop}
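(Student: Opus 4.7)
The plan is to combine the combinatorial join decomposition already established in the definition of the local development with a cell-by-cell metric product decomposition of $\mathrm{St}(v_T, K)$. By construction $D(T) = St^T * Lk_T$ is a simplicial join, hence $lk(v_T, D(T)) = Lk^T * Lk_T$ simplicially. The excerpt has already identified $Lk^T$ with $lk(v_T, F_T)$ and $Lk_T$ with the development of $\widehat{\mathcal G}(K_{<T}) \cong \widehat{\mathcal G}(G_T, \Delta_T)$, which is $\widehat{\Theta}_{\Gamma(T)}$. So the remaining content of the statement is purely metric: one must promote this combinatorial join to an isometric spherical join.

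The heart of the argument is to verify, for each cell $F_{T_1} \cap F_{T_2}^*$ of $K$ containing $v_T$ (equivalently $T_1 \subseteq T \subseteq T_2$), an isometric splitting as the metric product of its intersection with $F_T^*$ (the ``lower'' factor, a face of the cube $F_T^*$) and its intersection with $F_T$ (the ``upper'' factor, a face of the star of $v_T$ in $F_T$). In the cubical metric this is transparent: $F_{T_1} \cap F_{T_2}^*$ is a unit Euclidean cube of dimension $|T_2 \setminus T_1|$, and at the vertex $v_T$ the edges partition into ``down'' edges to vertices $T \setminus \{s\}$ for $s \in T \setminus T_1$ and ``up'' edges to vertices $T \cup \{s\}$ for $s \in T_2 \setminus T$, which are orthogonal coordinate directions. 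In the Moussong metric the corresponding product decomposition of a Coxeter block at one of its vertices is one of the standard facts about Coxeter zonotopes established in \cite{charney1995k}. Assembling these local product decompositions over all cells containing $v_T$ yields an isometry $lk(v_T, K) \cong lk(v_T, F_T) * lk(v_T, F_T^*)$, where each factor carries the metric induced from $K$.

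Finally, under the identification $lk(v_T, F_T^*) \cong \Delta_T$ from the preceding proposition, one verifies directly that the induced spherical metric on $\Delta_T$ has edge length $\pi/2$ in the cubical case and $\pi - \pi/m_{ij}$ in the Moussong case, matching the declared metrics on $\widehat{\Theta}_{\Gamma(T)}$. Since developing a simple complex of groups is a cellwise isometry, each simplex of $Lk_T = \widehat{\Theta}_{\Gamma(T)}$ inherits the same spherical metric as the corresponding simplex of $\Delta_T$, so the combinatorial join $Lk^T * Lk_T$ becomes the claimed isometric spherical join $lk(v_T, F_T) * \widehat{\Theta}_{\Gamma(T)}$. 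The principal obstacle is the Moussong-metric product decomposition of cells at $v_T$, which I would handle by direct reference to the analogous computation in \cite{charney1995k}; everything else is bookkeeping from definitions already set up in the excerpt.
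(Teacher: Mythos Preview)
Your proposal is correct and follows essentially the same approach as the paper: the paper establishes the combinatorial identifications $Lk^T \cong lk(v_T, F_T)$ and $Lk_T \cong \widehat\Theta_{\Gamma(T)}$ in the discussion immediately preceding the proposition, then states the result as a summary without a separate proof environment. Your write-up is in fact more explicit than the paper about the metric content (the orthogonal product decomposition of each cell $F_{T_1}\cap F_{T_2}^*$ at $v_T$), which the paper dispatches with ``It is straightforward to check that the metrics placed on $K$ above agree with the claimed metrics on $\Delta_T$.''
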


Showing the local development is nonpositively curved amounts to showing that these links are $\mathrm{CAT}(1)$. Since a spherical join is $\mathrm{CAT}(1)$ if and only if both components are \cite[Cor.~II.3.15]{bridson2013metric}, this reduces to showing that $lk(v_T, F_T)$ and $\widehat \Theta_{\Gamma(T)}$ are $\mathrm{CAT}(1)$ when $T \in \mathcal S^{fs}$.
We also have the following useful corollary for dealing with disconnected diagrams.

\begin{cor} \label{cor:ThetaSplits}
    If $\Gamma_1$, $\Gamma_2$ are extended Coxeter diagrams and $\Gamma = \Gamma_1 \sqcup \Gamma_2$ is their disjoint union, then 
    $\mathcal G(G_\Gamma) \cong \mathcal G(G_{\Gamma_1}) \times \mathcal G(G_{\Gamma_2})$.
     In particular, 
    \begin{enumerate}
        \item If $\mathcal G(G_\Gamma)$ is developable, then under either metric, $\Theta(\Gamma)$ is isometric to $\Theta(\Gamma_1) \times \Theta(\Gamma_2)$, and
        \item If $G_\Gamma$ is finite, then under either metric,
    $\widehat\Theta(\Gamma)$ is isometric to the spherical join $\widehat\Theta(\Gamma_1) * \widehat\Theta(\Gamma_2)$.
    \end{enumerate}
\end{cor}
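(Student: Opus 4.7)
The plan is to unpack everything in sight via the decomposition $S = S_1 \sqcup S_2$ coming from $\Gamma = \Gamma_1 \sqcup \Gamma_2$, verify that the complex of groups $\mathcal{G}(G_\Gamma, K_\Gamma)$ splits as a product at every level, and then deduce both consequences. The starting observation is that since there are no edges between vertices of $\Gamma_1$ and vertices of $\Gamma_2$, the defining relations of $G_\Gamma$ force elements of $S_1$ and $S_2$ to commute, so $G_\Gamma \cong G_{\Gamma_1} \times G_{\Gamma_2}$ (and likewise $W_\Gamma \cong W_{\Gamma_1} \times W_{\Gamma_2}$). Consequently any $T \subseteq S$ decomposes uniquely as $T = T_1 \sqcup T_2$ with $T_i = T \cap S_i$, and $G_{\Gamma(T)} = G_{\Gamma_1(T_1)} \times G_{\Gamma_2(T_2)}$, so $T \in \mathcal{S}^{fs}_\Gamma$ if and only if $T_i \in \mathcal{S}^{fs}_{\Gamma_i}$ for both $i$ (and analogously for $\mathcal{S}^f$). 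This yields a poset isomorphism $\mathcal{S}^{fs}_\Gamma \cong \mathcal{S}^{fs}_{\Gamma_1} \times \mathcal{S}^{fs}_{\Gamma_2}$ with the product order.

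Next I would use this to identify the cubical structures. A cube of $K_\Gamma$ has the form $F_A \cap F_B^*$ for $A \subseteq B$ in $\mathcal{S}^{fs}_\Gamma$, with vertex set $\{v_{T'} : A \subseteq T' \subseteq B\}$. Decomposing $A = A_1 \sqcup A_2$ and $B = B_1 \sqcup B_2$, the interval $[A,B]$ in $\mathcal{S}^{fs}_\Gamma$ is the product of the intervals $[A_i, B_i]$ in $\mathcal{S}^{fs}_{\Gamma_i}$, so this cube is precisely the product of cubes $(F_{A_1} \cap F_{B_1}^*) \times (F_{A_2} \cap F_{B_2}^*)$ in $K_{\Gamma_1} \times K_{\Gamma_2}$. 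Hence $K_\Gamma \cong K_{\Gamma_1} \times K_{\Gamma_2}$ as cube complexes. The local groups at $v_T = (v_{T_1}, v_{T_2})$ factor as $G_{\Gamma_1(T_1)} \times G_{\Gamma_2(T_2)}$, and the inclusion-induced edge maps split as products as well, so $\mathcal{G}(G_\Gamma, K_\Gamma) \cong \mathcal{G}(G_{\Gamma_1}, K_{\Gamma_1}) \times \mathcal{G}(G_{\Gamma_2}, K_{\Gamma_2})$ as complexes of groups.

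For (1), I would invoke the standard fact that a direct product of developable simple complexes of groups is developable with development the product of developments and action the diagonal action, and check that both metrics factor. The cubical metric makes each $F_A \cap F_B^*$ a standard unit Euclidean cube, which is isometrically the product of the corresponding unit cubes in the factors. For the Moussong metric, the Coxeter zonotope of $W_B = W_{B_1} \times W_{B_2}$ is the product of the Coxeter zonotopes of the $W_{B_i}$, its reflection hyperplanes are unions of products of hyperplanes with whole factors, and hence a connected component of the complement is the product of such components; thus Coxeter blocks multiply, giving the product metric on the piece. For (2) with $G_\Gamma$ finite, apply the same splitting argument to $\widehat{\mathcal{G}}(G_\Gamma, \Delta_\Gamma)$: the fundamental simplex is $\Delta_\Gamma = \Delta_{\Gamma_1} * \Delta_{\Gamma_2}$, and at a face $\sigma_T = \sigma_{T_1} * \sigma_{T_2}$ the local group $G_{\widehat{T}} = G_{\widehat{T_1}} \times G_{\widehat{T_2}}$ factors. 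The development of a join of simple complexes of groups under the diagonal action is the spherical join of their developments (with fundamental domain $\Delta_{\Gamma_1} * \Delta_{\Gamma_2}$ and matching stabilizers, so uniqueness of development closes it). For the metric, in either case every edge of $\Delta_T$ connecting a vertex in $T_1$ to one in $T_2$ has length $\pi/2$ (the cubical metric gives $\pi/2$ always, and the Moussong metric gives $\pi - \pi/m_{ij} = \pi/2$ since $m_{ij} = 2$ across the split), which is exactly the spherical join edge length.

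The main (and really only) subtlety is the bookkeeping ensuring that the development of a product (respectively join) of simple complexes of groups is the product (respectively join) of developments with the expected metric; everything else is direct identification. I would handle this by checking directly that the claimed product (or join) with the diagonal action has the correct fundamental domain and vertex stabilizers, then invoking uniqueness of the development of a simple complex of groups to conclude.
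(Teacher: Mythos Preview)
Your argument is correct, and for the main splitting $\mathcal G(G_\Gamma) \cong \mathcal G(G_{\Gamma_1}) \times \mathcal G(G_{\Gamma_2})$ and for part (1) it is essentially the paper's proof, with more detail supplied on why the two metrics respect the product.

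For part (2), however, you take a genuinely different route. You work directly with $\widehat{\mathcal G}(G_\Gamma,\Delta_\Gamma)$, decompose $\Delta_\Gamma = \Delta_{\Gamma_1} * \Delta_{\Gamma_2}$ and the local groups $G_{\widehat T} = G_{\widehat{T_1}} \times G_{\widehat{T_2}}$, and then argue that the development of a join of simple complexes of groups is the spherical join of the developments. The paper instead reduces (2) to (1): since $G_\Gamma$ finite implies $\mathcal G(G_\Gamma)$ is developable, Proposition~\ref{prop:linkdecomp} identifies $\widehat\Theta_\Gamma$ with $lk(v_S,\Theta_\Gamma)$; then part (1) gives $\Theta_\Gamma \cong \Theta_{\Gamma_1} \times \Theta_{\Gamma_2}$, and the standard fact that the link of a vertex in a metric product is the spherical join of the links finishes the job. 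The paper's route avoids having to verify the ``development of a join is a join of developments'' statement separately, since that statement is absorbed into the link-of-a-product computation. Your approach is more self-contained (it does not need Proposition~\ref{prop:linkdecomp}), but you do have to carry out the bookkeeping you flag at the end.
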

\begin{proof}
    By a product $\mathcal G(G_1,K_1) \times \mathcal G(G_2,K_2)$ of (simple) complexes of groups, we mean the complex of groups over $K_1 \times K_2$ with vertex groups $G^{(1)}_{v_1} \times G^{(2)}_{v_2}$ for $v_i \in \mathrm{Vert}(K_i)$ and vertex group $G^{(i)}_{v_i}$ of $\mathcal G(G_i,K_i)$, and edge maps $\psi^{(1)}_{jk} \times \psi^{(2)}_{\ell n}$ with $\psi^{(i)}_{jk} : G^{(i)}_k \to G^{(i)}_j$ the edge maps of $\mathcal G(G_i,K_i)$.

    It is clear from the definitions that if $\Gamma = \Gamma_1 \sqcup \Gamma_2$ then $G_\Gamma \cong G_{\Gamma_1} \times G_{\Gamma_2}$ and $K_\Gamma \cong K_{\Gamma_1} \times K_{\Gamma_2}$. Thus we easily see that $\mathcal G(G_\Gamma) \cong \mathcal G(G_{\Gamma_1}) \times \mathcal G(G_{\Gamma_2})$. The statement about developability follows readily from this decomposition of the complexes of groups. 

    Now suppose $G_\Gamma$ is finite and $\Gamma = \Gamma_1 \sqcup \Gamma_2$. In particular, $\mathcal G(G_\Gamma)$ is developable since $G_\Gamma$ is finite. 
    Let $S$ be the standard generating set of $G_\Gamma$. 
    By our definitions, $K_{>S}$ is empty; hence $\widehat\Theta_\Gamma \cong K_{>S} * \widehat\Theta_\Gamma$.
    By Proposition \ref{prop:linkdecomp}, this is link of $v_S$ in the local development of $\mathcal G(G_\Gamma)$. But since the complex of groups is developable, this is simply $lk(v_S, \Theta_\Gamma)$.
    Since $\Theta(\Gamma) \cong \Theta(\Gamma_1) \times \Theta(\Gamma_2)$ by our previous result, 
    \begin{align*}
        \widehat\Theta_\Gamma
        &\cong lk(v_S, \Theta_\Gamma) \\
        &\cong lk(v_S, \Theta_{\Gamma_1} \times \Theta_{\Gamma_2}) \\
        &\cong lk(v_S, \Theta_{\Gamma_1}) * lk(v_S, \Theta_{\Gamma_2}) \\
        &\cong \widehat\Theta_{\Gamma_1} * \widehat\Theta_{\Gamma_2}. \qedhere
    \end{align*}
\end{proof}

\section{The cubical metric}
\label{sec:cubical}

In this section, we complete the proof of Theorem \ref{thm:cube}. Assume that $\mathcal S^{fs} = \mathcal S^f$ and the underlying Coxeter diagram of $\Gamma$ satisfies (FC). Endow $\mathcal G = \mathcal G(G_\Gamma, K_\Gamma)$ with the cubical metric.
As discussed at the end of the previous section, we must show that $lk(v_T, F_T)$ and $\widehat \Theta = \widehat \Theta_{\Gamma(T)}$ are $\mathrm{CAT}(1)$ when $T \in \mathcal S^{fs}$.

The cubical metric places a length of $\pi/2$ on each edge of $\widehat \Theta$. 
We apply Gromov's link condition, which says that a piecewise spherical simplicial complex with edge lengths all $\pi/2$ is $\mathrm{CAT}(1)$ if and only if it's a flag complex (see \cite[Thm.~II.5.18]{bridson2013metric} or \cite{gromov1987hyperbolic}).

Suppose $v_1 = [f_1] ,\dots,v_n = [f_n]$ are vertices of $\widehat \Theta$ which are pairwise connected by edges. This means each $f_i$ is a face of the polytope corresponding to $G_{\Gamma(T)}$, and the $f_i$ are pairwise nested. The only way a collection of affine subspaces of $\mathbb{C}^n$ can be pairwise nested is if they are linearly nested, resulting in a simplex $[f_1 < \dots < f_n]$ (up to change of indices) spanned by the vertices $v_i$. Hence $\widehat \Theta$ is flag, and therefore $\mathrm{CAT}(1)$ under the cubical metric. 

We now turn to $lk(v_T, F_T)$. As with $\widehat \Theta$, the cubical metric places a length of $\pi/2$ on each edge, so we must show that this is a flag complex. 
We can view $\mathcal S^{fs}_{> \varnothing}$ as an abstract simplicial complex with vertex set $S$, with a set $T$ spanning a simplex if and only if $W_{\Gamma(T)}$ is finite. Since the underlying Coxeter diagram of $\Gamma$ satisfies (FC), this is a flag complex. 
By the same argument in \cite[Lem.~4.3.3]{charney1995k}, we have that $lk(v_T,F_T) \cong lk(T, \mathcal S^{fs}_{> \varnothing})$. Since the link of a flag complex is still flag, and our previous remarks show that $\mathcal S^{fs}_{>\varnothing}$ is flag, it follows that $lk(v_T,F_T)$ is a flag complex, and hence $\mathrm{CAT}(1)$.

Therefore $\mathcal G$ is nonpositively curved, and hence developable with development $\Theta = \Theta_\Gamma$. As discussed before, $\Theta$ has the cell structure and metric of a cube complex. Since $\mathcal G$ is a simple complex of groups over a simply connected domain, we know that $\Theta$ is simply conneted and hence is a $\mathrm{CAT}(0)$ cube complex. Notice that $G_\Gamma$ acts cocompactly on $\Theta$ with fundamental domain $K$. The stabilizer of a cell of $K$ is the finite group $G_{\Gamma(T)}$ for some $T \in \mathcal S^{fs}$, implying the stabilizer of an arbitrary cell is a conjugate of this group, and hence finite. Thus $G$ also acts properly on $\Theta$, and therefore is cocompactly cubulated.

\section{A \texorpdfstring{$\mathrm{CAT}(1)$}{CAT(1)} criterion}
\label{sec:cat1crit}

Before proving Theorem \ref{thm:moussong}, we present a criterion for a simplicial complex made of $A_3$-simplices to be $\mathrm{CAT}(1)$. The idea behind this criteria comes from \cite{charney2004deligne} (which itself is based on \cite{elder2002curvature}), where it is used implicitly to show that the Moussong metric for the Deligne complex for the 4-strand braid group is $\mathrm{CAT}(0)$.

\begin{defn} \label{def:cccc}
    Let $\Delta$ be a spherical 2-simplex with vertices labeled $\hat a$, $\hat b$, $\hat c$ so that the angle at $\hat a$ and $\hat c$ is $\pi/3$ and the angle at $\hat b$ is $\pi/2$. For $g = a,b,c$, we label the edge opposite $\hat g$ as $g$. We call $\Delta$ the \emph{marked $A_3$ simplex}. 

    We call a homogeneous\footnote{Recall that an $n$-dimensional homogeneous simplicial complex is one where every simplex is contained in some $n$-simplex.} 2-dimensional simplicial complex $\Psi$ a \emph{marked $A_3$ simplicial complex} if every top dimensional simplex $\sigma$ is endowed with a choice of isomorphism $m_\sigma : \sigma \to \Delta$ so that if $x \in \sigma \cap \sigma'$, then $m_{\sigma}(x) = m_{\sigma'}(x)$.
    The maps $m_\sigma$ are the \emph{markings} of $\Psi$. 
    Sometimes we will call a top dimensional simplex a \emph{chamber}. 
    We call a vertex $v$ of $\Psi$ type $\hat a$, $\hat b$, or $\hat c$ if for some (hence any) chamber $\sigma$ containing $v$, the vertex $m_\sigma(v)$ of $\Delta$ is labeled $\hat a$, $\hat b$, $\hat c$, respectively. 
    Similarly, we call an edge $e$ type $a,b,c$ if $m_\sigma(e)$ is the edge of $\Delta$ labeled $a,b,c$, resp.
    Every marked $A_3$ simplicial complex has a metric, which we call the \emph{canonical metric}, obtained by pulling back the metric of $\Delta$ along the markings, so that each chamber is a simplex of shape $A_3$. 

    If $\Psi$ is a marked $A_3$ simplicial complex, we say it satisfies Charney's combinatorial $\mathrm{CAT}(1)$ criteria (CCCC) if
    \begin{enumerate}
        \item \label{item:largeaclink} The link of a vertex of type $\hat a$ or $\hat c$ has girth\footnote{The complex $\Psi$ is 2-dimensional, so the links of vertices are (simplicial) graphs. We often use the language of graph theory when referring to these links.} at least $6$,
        \item \label{item:developB} The link of a vertex of type $\hat b$ is a  complete bipartite graph which contains an embedded 4-cycle,
        \item \label{item:fillloop} Any edge path in Figure \ref{fig:shortclosededge} is contained in a corresponding subcomplex of $\Psi$ shown in Figure \ref{fig:fillededge}. (A shaded triangle represents a 2-simplex.)
    \end{enumerate}
\end{defn}

\begin{figure}[!ht]
    \centering
    \begin{tikzpicture}
        \matrix[matrix of nodes,column sep=-2pt,nodes={anchor=center, minimum height=0.1cm, align=flush center}]{
        \begin{tikzpicture}[scale=0.9]
        \node[left] at (-1.4,1) {(i)};

        \coordinate (A1) at (0,0) ;
        \coordinate (C1) at (1,1) ;
        \coordinate (C2) at (-1,1);
        \coordinate (A2) at (0,2) ;
        
        \filldraw (A1) circle (0.05cm);
        \filldraw (C1) circle (0.05cm);
        \filldraw (C2) circle (0.05cm);
        \filldraw (A2) circle (0.05cm);

        \node[below] at (A1) {$\hat a$};
        \node[right] at (C1) {$\hat c$};
        \node[left]  at (C2) {$\hat c$};
        \node[above] at (A2) {$\hat a$};

        \draw (A1) -- (C1) -- (A2) -- (C2) -- (A1);
        \end{tikzpicture}

        &
        \begin{tikzpicture}[scale=0.9]

        \node at (-2,1.1) {(ii.a)};

        \coordinate (B1) at (0,0.1)    ;
        \coordinate (A1) at (-1.4,0.1) ;
        \coordinate (A3) at (1.4,0.1)  ;
        \coordinate (A2) at (0,2.1)    ; 
        \coordinate (B2) at (-0.7, 1.1);
        \coordinate (B3) at (0.7, 1.1) ;

        \filldraw (B1) circle (0.05cm);
        \filldraw (A1) circle (0.05cm);
        \filldraw (A3) circle (0.05cm);
        \filldraw (A2) circle (0.05cm);
        \filldraw (B2) circle (0.05cm);
        \filldraw (B3) circle (0.05cm);

        \node[below]       at (B1) {$\hat b$};
        \node[below left]  at (A1) {$\hat a$};
        \node[below right] at (A3) {$\hat a$};
        \node[above]       at (A2) {$\hat a$}; 
        \node[above left]  at (B2) {$\hat b$};
        \node[above right] at (B3) {$\hat b$};

        \draw (B1) -- (A1) -- (B2) -- (A2) -- (B3) -- (A3) -- (B1); 
        \end{tikzpicture}
        &
        \begin{tikzpicture}[scale=0.9]

        \node at (-2,-2.1) {(ii.b)};

        \coordinate (B1b) at (0,-3.1)    ;
        \coordinate (A1b) at (-1.4,-3.1) ;
        \coordinate (A3b) at (1.4,-3.1)  ;
        \coordinate (A2b) at (0,-1.1)    ; 
        \coordinate (B2b) at (-0.7, -2.1);
        \coordinate (B3b) at (0.7, -2.1) ;

        \filldraw (B1b) circle (0.05cm);
        \filldraw (A1b) circle (0.05cm);
        \filldraw (A3b) circle (0.05cm);
        \filldraw (A2b) circle (0.05cm);
        \filldraw (B2b) circle (0.05cm);
        \filldraw (B3b) circle (0.05cm);

        \node[below]       at (B1b) {$\hat b$};
        \node[below left]  at (A1b) {$\hat c$};
        \node[below right] at (A3b) {$\hat c$};
        \node[above]       at (A2b) {$\hat c$}; 
        \node[above left]  at (B2b) {$\hat b$};
        \node[above right] at (B3b) {$\hat b$};

        \draw (B1b) -- (A1b) -- (B2b) -- (A2b) -- (B3b) -- (A3b) -- (B1b); 

        \end{tikzpicture}

        \\
        };
    \end{tikzpicture}
    \caption{The short closed edge paths}
    \label{fig:shortclosededge}
\end{figure}
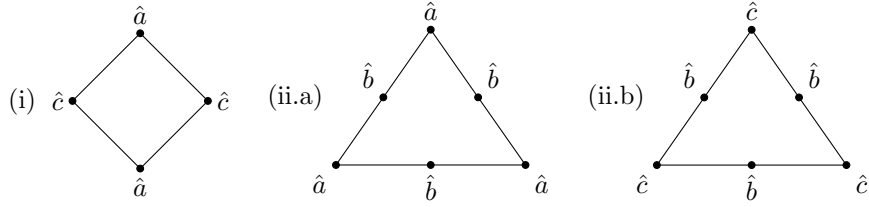

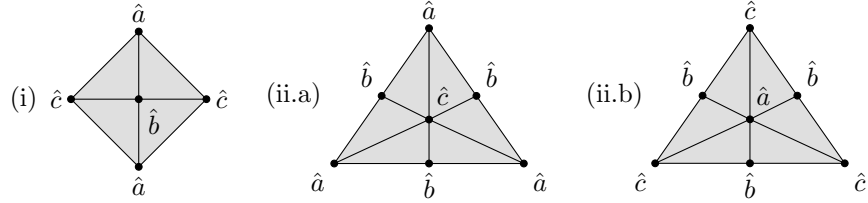
\begin{figure}[!ht]
    \centering
    \begin{tikzpicture}
        \matrix[matrix of nodes,column sep=-2pt,nodes={anchor=center, minimum height=0.1cm, align=flush center}]{
        
        \begin{tikzpicture}[scale=0.9]
        \node at (-1.7,-2.1) {(i)};
        \coordinate (A1) at (0,-3.1) ;
        \coordinate (C1) at (1,-2.1) ;
        \coordinate (C2) at (-1,-2.1);
        \coordinate (A2) at (0,-1.1) ;
        \coordinate (B1) at (0, -2.1);

        \draw[fill=gray!25] (A1) -- (C1) -- (A2) -- (C2) -- (A1);
        
        \filldraw (A1) circle (0.05cm);
        \filldraw (C1) circle (0.05cm);
        \filldraw (C2) circle (0.05cm);
        \filldraw (A2) circle (0.05cm);
        \filldraw (B1) circle (0.05cm);

        \node[below] at  (A1) {$\hat a$};
        \node[right] at  (C1) {$\hat c$};
        \node[left] at   (C2) {$\hat c$};
        \node[above] at  (A2) {$\hat a$};
        \node[below right] at (B1) {$\hat b$};

        \draw (B1) -- (A1);
        \draw (B1) -- (C1);
        \draw (B1) -- (C2);
        \draw (B1) -- (A2);
        \end{tikzpicture}
        &
        \begin{tikzpicture}[scale=0.9]
        \node at (-2,1.1) {(ii.a)};

        \coordinate (B1) at (0,0.1)    ;
        \coordinate (A1) at (-1.4,0.1) ;
        \coordinate (A3) at (1.4,0.1)  ;
        \coordinate (A2) at (0,2.1)    ; 
        \coordinate (B2) at (-0.7, 1.1);
        \coordinate (B3) at (0.7, 1.1) ;
        \coordinate (C1) at (0, 0.75)   ;

        \draw[fill=gray!25] (B1) -- (A1) -- (B2) -- (A2) -- (B3) -- (A3) -- (B1); 
        \draw (C1) -- (B1);
        \draw (C1) -- (A1);
        \draw (C1) -- (A3);
        \draw (C1) -- (A2);
        \draw (C1) -- (B2);
        \draw (C1) -- (B3);

        \filldraw (B1) circle (0.05cm);
        \filldraw (A1) circle (0.05cm);
        \filldraw (A3) circle (0.05cm);
        \filldraw (A2) circle (0.05cm);
        \filldraw (B2) circle (0.05cm);
        \filldraw (B3) circle (0.05cm);
        \filldraw (C1) circle (0.05cm);

        \node[below]       at (B1) {$\hat b$};
        \node[below left]  at (A1) {$\hat a$};
        \node[below right] at (A3) {$\hat a$};
        \node[above]       at (A2) {$\hat a$}; 
        \node[above left]  at (B2) {$\hat b$};
        \node[above right] at (B3) {$\hat b$};
        \node              at (0.2,1.1) {$\hat c$};

        \end{tikzpicture}
        &
        \begin{tikzpicture}[scale=0.9]
        \node at (-2,-2.1) {(ii.b)};

        \coordinate (B1b) at (0,-3.1)    ;
        \coordinate (A1b) at (-1.4,-3.1) ;
        \coordinate (A3b) at (1.4,-3.1)  ;
        \coordinate (A2b) at (0,-1.1)    ; 
        \coordinate (B2b) at (-0.7, -2.1);
        \coordinate (B3b) at (0.7, -2.1) ;
        \coordinate (C1b) at (0, -2.45)   ;

        \draw[fill=gray!25] (B1b) -- (A1b) -- (B2b) -- (A2b) -- (B3b) -- (A3b) -- (B1b); 
        \draw (C1b) -- (B1b);
        \draw (C1b) -- (A1b);
        \draw (C1b) -- (A3b);
        \draw (C1b) -- (A2b);
        \draw (C1b) -- (B2b);
        \draw (C1b) -- (B3b);

        \filldraw (B1b) circle (0.05cm);
        \filldraw (A1b) circle (0.05cm);
        \filldraw (A3b) circle (0.05cm);
        \filldraw (A2b) circle (0.05cm);
        \filldraw (B2b) circle (0.05cm);
        \filldraw (B3b) circle (0.05cm);
        \filldraw (C1b) circle (0.05cm);

        \node[below]       at (B1b) {$\hat b$};
        \node[below left]  at (A1b) {$\hat c$};
        \node[below right] at (A3b) {$\hat c$};
        \node[above]       at (A2b) {$\hat c$}; 
        \node[above left]  at (B2b) {$\hat b$};
        \node[above right] at (B3b) {$\hat b$};
        \node              at (0.2,-2.1) {$\hat a$};

        \end{tikzpicture}

        \\
        };
    \end{tikzpicture}
    \caption{Filling the short closed edge paths}
    \label{fig:fillededge}
\end{figure}

Our main theorem for this section is the following.

\begin{thm} \label{thm:charneycat1}
    If $\Psi$ is a marked $A_3$ simplicial complex which satisfies \tempname, then $\Psi$ is $\mathrm{CAT}(1)$ under its canonical metric.
\end{thm}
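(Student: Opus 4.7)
The plan is to apply the piecewise spherical Gromov--Bowditch criterion (see \cite[Ch.~II.5]{bridson2013metric}): $\Psi$ is $\mathrm{CAT}(1)$ if and only if it is locally $\mathrm{CAT}(1)$ and contains no closed local geodesic of length strictly less than $2\pi$. The local condition will follow directly from hypotheses (1) and (2), and the global condition will be handled using hypothesis (3).

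For local $\mathrm{CAT}(1)$: the link of a vertex $v$ is a metric graph whose edges have length equal to the chamber angle at $v$. If $v$ is of type $\hat a$ or $\hat c$ the angle is $\pi/3$, so $lk(v)$ is $\mathrm{CAT}(1)$ iff its girth is at least $6$, which is (1). If $v$ is of type $\hat b$ the angle is $\pi/2$, so $\mathrm{CAT}(1)$ is equivalent to girth $\geq 4$, which is immediate for a complete bipartite graph by (2).

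For the global condition, suppose $\gamma$ is a closed local geodesic with $\mathrm{length}(\gamma) < 2\pi$. I would first reduce to the case that $\gamma$ is a closed edge path in $\Psi^{(1)}$. At any type $\hat b$ vertex on $\gamma$, the link is complete bipartite with edges of length $\pi/2$ and hence has diameter exactly $\pi$, realized only at antipodal pairs of vertices lying on the \emph{same side} of the bipartition (interior points of edges in such a graph are at distance strictly less than $\pi$ from every other point); thus $\gamma$ enters and exits such a vertex along two edges of $\Psi$ of the same combinatorial type. Combined with a developing argument (unfold the chambers crossed by $\gamma$ onto $S^2$; were $\gamma$ to avoid all vertices of $\Psi$ its image would be a great-circle arc of length $2\pi$, contradicting $\mathrm{length}(\gamma) < 2\pi$), this forces $\gamma$ to lie in $\Psi^{(1)}$. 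Using the $A_3$ edge lengths ($b$-edges have length $\arccos(\tfrac13)$, $a$- and $c$-edges have length $\arccos(\tfrac1{\sqrt 3})$) together with the same-side constraint at $\hat b$-vertices and the girth-$6$ constraint at $\hat a,\hat c$-vertices, a direct case analysis shows the only closed edge loops of length $<2\pi$ which are locally geodesic at every vertex are combinatorially the three in Figure \ref{fig:shortclosededge}.

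To conclude, hypothesis (3) provides in each case a filling with central vertex $w$, and this contradicts $\gamma$'s geodicity. Pick any type $\hat a$ or $\hat c$ vertex $p$ on $\gamma$ (one exists in all three cases), with $\gamma$-neighbors $q_1,q_2$. The filling contributes two new chambers at $p$, namely the triangles $(p,q_1,w)$ and $(p,q_2,w)$, which produce in $lk(p)$ two new edges joining the link-vertex $\tilde w$ (corresponding to the edge $pw$) to each of $\tilde q_1,\tilde q_2$. Since each edge of $lk(p)$ has length $\pi/3$, this yields
\[
d_{lk(p)}(\tilde q_1,\tilde q_2)\le \tfrac{2\pi}{3}<\pi,
\]
whereas $\gamma$ being locally geodesic at $p$ requires $d_{lk(p)}(\tilde q_1,\tilde q_2)\ge \pi$, a contradiction. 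The main obstacle will be the enumeration step---both the geometric reduction to $\Psi^{(1)}$ via the developing/diameter arguments and the combinatorial case analysis ruling out other short cycle types---while the local $\mathrm{CAT}(1)$ check and the filling contradiction are comparatively short.
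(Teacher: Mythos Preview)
Your local $\mathrm{CAT}(1)$ check and your final contradiction via the filling are fine, but the reduction to edge paths has two genuine gaps.

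First, the claim about $\hat b$-links is false. In a complete bipartite graph with edge lengths $\pi/2$, interior points \emph{can} be at distance exactly $\pi$: if $p$ lies on edge $a_1c_1$ at distance $t$ from $a_1$ and $q$ lies on edge $a_2c_2$ at distance $t$ from $a_2$, then $d(p,q)=\pi$. So a local geodesic through a $\hat b$-vertex need not enter and exit along edges of $\Psi$; it can pass straight through the interior of the star (this is precisely the situation in the paper's Figure~\ref{fig:quadofB}).

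Second, the developing argument does not give what you want. If $\gamma$ avoids all vertices you can indeed unroll the chambers it meets onto $\widehat\Sigma(A_3)\cong S^2$, and the image lies on a great circle, but there is no reason the development should \emph{close up} on $S^2$: the first and last chambers of $\Psi$ coincide, but their images under the development are typically different chambers of $S^2$. So the developed arc has length equal to $\ell(\gamma)<2\pi$, not $2\pi$, and nothing is contradicted.

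What the paper does instead is keep the three cases separate. For geodesics that meet $2$-cell interiors but avoid $\hat a,\hat c$-vertices, one proves a structural lemma (Lemma~\ref{item:simpleintersection}, which already uses hypothesis~(\ref{item:fillloop})) that stars of distinct $\hat b$-vertices intersect in at most an edge; combined with the traverse/cut-corner dichotomy (Lemmas~\ref{lem:cutsortraverses}, \ref{lem:travorcut}) this forces $\gamma$ to cross at least four stars if traversing and at least six if cutting corners, giving $\ell(\gamma)\ge 2\pi$ by developing. For geodesics that hit an $\hat a$ or $\hat c$ vertex but are not edge paths, one rotates each non-edge segment (which is a half great circle of length $\pi$) rel endpoints into an edge path of the same length and checks the result is still locally geodesic, reducing to the edge-path case. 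Your enumeration also omits the type~(iii) loops of Figure~\ref{fig:anotherpath}; these are not ruled out by hypothesis~(\ref{item:fillloop}) directly but require the auxiliary Lemma~\ref{lem:completinglastpath}, which again rests on Lemma~\ref{item:simpleintersection}.
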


Before proving this, we need to establish some facts about the geometry of $\Psi$ under its canonical metric which follow from the above criteria. 

One of the main consequences (and inspirations) of our definition of a marked $A_3$ simplicial complex is that we may \emph{develop} certain geodesics from $\Psi$ onto the $A_3$ Coxeter complex $\widehat \Sigma = \widehat \Sigma(A_3)$. 
Let $W = W_{A_3}$ be the Coxeter group with diagram $A_3$ and generators $a,b,c$, i.e.,
\begin{align*}
    W = \langle\, a,b,c \mid a^2=b^2=c^2 = (ab)^3 = (bc)^3 = (ca)^2 = 1 \, \rangle.
\end{align*}
The vertices of $\widehat\Sigma$ are cosets of the form $wW_{\{a,b,c\} \setminus \{g\}}$, where $g \in \{a,b,c\}$. If we define $\hat g \coloneqq \{a,b,c\} \setminus \{g\}$, then the vertices are cosets of $W_{\hat g}$ for $g \in \{a,b,c\}$ and may sensibly be called \emph{type $\hat g$} (i.e., type $\hat a$, $\hat b$, or $\hat c$). Each chamber of $\widehat\Sigma$ is a simplex of type $A_3$, such that the angle of every simplex at a vertex of type ${\hat a}$ or ${\hat c}$ is $\pi/3$, and the angle of every simplex at a vertex of type $\hat b$ is $\pi/2$---hence the definition of the marked $A_3$ simplex $\Delta$ and its metric.

Suppose $\gamma$ is a local geodesic of $\Psi$ which does not intersect any vertices. Then the sequence of chambers which intersect $\gamma$ can be mapped down to a sequence of adjacent chambers of $\widehat \Sigma$ so that the markings of $\widehat \Sigma$ agree with the markings of $\Psi$. In particular, the image of $\gamma$ is a local geodesic in $\widehat \Sigma$, called the \emph{development of $\gamma$ (onto $\widehat \Sigma$)}.

Condition (\ref{item:developB}) allows us to develop a local geodesic which intersects vertices of type $\hat b$ in the following way. 
First we note that since $\Psi$ is marked, (\ref{item:developB}) implies that the link of a vertex of $\hat b$ is the join of a set of at least two vertices of type $\hat a$ with a set of at least two vertices of type $\hat c$. 
If $\gamma$ is a local geodesic which passes through a vertex $v$ of type $\hat b$, then it intersects the $\varepsilon$-sphere of $v$ at two points which are distance at least $\pi$ apart in the spherical metric on the $\varepsilon$-sphere. 
This sphere is isometric to the link of $v$, so the points on the sphere correspond to points in the link which are distance no less than $\pi$ apart.
Since the link is complete bipartite, they are exactly distance $\pi$ apart and are contained in a loop of edge length $4$ (since such a loop exists). 
This loop corresponds to 4 simplices of $\Psi$ which form a quadrilateral as in Figure \ref{fig:quadofB}. These simplices can then be mapped down to $\widehat \Sigma$, and the image of $\gamma$ will still be locally geodesic at the image of $v$.

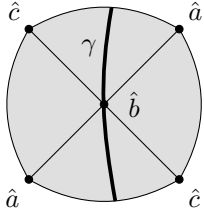
\begin{figure}[ht]
    \centering
    \begin{tikzpicture}
        \coordinate (A) at (0,0);
        \coordinate (B) at (2,0);
        \coordinate (C) at (0,2);
        \coordinate (D) at (2,2);
        \coordinate (E) at (1,1);

        \draw[fill=gray!25] (A) to[bend right] (B) to[bend right] (D) to[bend right] (C) to[bend right] (A);

        \draw (E) -- (A);
        \draw (E) -- (B);
        \draw (E) -- (C);
        \draw (E) -- (D);

        \filldraw (A) circle (0.05cm);
        \filldraw (B) circle (0.05cm);
        \filldraw (C) circle (0.05cm);
        \filldraw (D) circle (0.05cm);
        \filldraw (E) circle (0.05cm);

        \node[below left] at (A) {$\hat a$};
        \node[above right] at (D) {$\hat a$};
        \node[above left] at (C) {$\hat c$};
        \node[below right] at (B) {$\hat c$};

        \node[right=0.2cm] at (E) {$\hat b$};

        \coordinate (G1) at (1.1,2.285);
        \coordinate (G2) at (1.15,-0.29);
        \draw[line width=0.05cm] (G1) to[in=100,out=-100] (G2);

        \node at (0.8,1.75) {$\gamma$};
    \end{tikzpicture}
    \caption{Developing a geodesic through a vertex of type $\hat b$}
    \label{fig:quadofB}
\end{figure}

In fact, any local geodesic which passes through the (open) star of a vertex of type $\hat b$ is contained in a quadrilateral of this form. (If it doesn't pass through the vertex, this follows from the usual developing process, plus condition (\ref{item:developB}).) This leads us to the following

\begin{lemma} \label{lem:cutsortraverses}
    Suppose $v$ is a vertex of type $\hat b$ and $\gamma : [a_1,a_2] \to \mathrm{St}(v)$ is a geodesic segment with endpoints $v_i = \gamma(a_i)$ on the interior of an edge $e_i$ of $\partial \mathrm{St}(v)$. Then either $e_1$ and $e_2$ are adjacent (in which case we say $\gamma$ ``cuts a corner'' of $\mathrm{St}(v)$), or there is another edge $e_0$ adjacent to both $e_1$ and $e_2$ (in which case we say $\gamma$ ``traverses'' $\mathrm{St}(v)$).
\end{lemma}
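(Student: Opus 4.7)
The plan is to recognize this lemma as essentially a combinatorial statement about the link $L := lk(v,\Psi)$, which by condition (\ref{item:developB}) of CCCC is a complete bipartite graph containing an embedded $4$-cycle. In particular $L \cong K_{m,n}$ with $m, n \geq 2$. The geodesic $\gamma$ itself plays only a bookkeeping role: since each endpoint $v_i$ lies in the interior of an edge of $\partial\mathrm{St}(v)$, the edges $e_1, e_2$ are uniquely determined, and that is the only input I will need from $\gamma$.

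First I would record the standard bijection between edges of $\partial\mathrm{St}(v)$ and edges of $L$: each triangle of $\mathrm{St}(v)$ contains $v$ and corresponds to an edge of $L$, and the edge of that triangle opposite $v$ is the associated edge of $\partial\mathrm{St}(v)$. Under this bijection, two edges of $\partial\mathrm{St}(v)$ are adjacent precisely when the corresponding edges of $L$ share a vertex. Writing the bipartition of $L$ as $\{\hat a_1, \ldots, \hat a_m\} \sqcup \{\hat c_1, \ldots, \hat c_n\}$, let $\tilde e_k \subset L$ correspond to $e_k$, say $\tilde e_1 = \{\hat a_i, \hat c_j\}$ and $\tilde e_2 = \{\hat a_{i'}, \hat c_{j'}\}$. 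If $i = i'$ or $j = j'$, then $\tilde e_1$ and $\tilde e_2$ share a vertex, so $e_1$ and $e_2$ are adjacent in $\partial\mathrm{St}(v)$ and $\gamma$ cuts a corner by definition. Otherwise $i \neq i'$ and $j \neq j'$; completeness of $L$ supplies the edge $\tilde e_0 := \{\hat a_i, \hat c_{j'}\}$, which shares $\hat a_i$ with $\tilde e_1$ and $\hat c_{j'}$ with $\tilde e_2$, so its image $e_0 \subset \partial\mathrm{St}(v)$ is adjacent to both $e_1$ and $e_2$ and $\gamma$ traverses $\mathrm{St}(v)$.

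There is no substantive obstacle: the lemma is essentially a translation of the complete-bipartite hypothesis on $lk(v,\Psi)$ into a dichotomy for adjacency of edges on $\partial\mathrm{St}(v)$, dressed up in the geometric terminology ``cuts a corner'' versus ``traverses''. The embedded $4$-cycle requirement in condition (\ref{item:developB}) guarantees $m, n \geq 2$, which is exactly what makes the non-adjacent case possible at all; without it, every pair of edges in $L$ would share a vertex and only the ``cuts a corner'' conclusion could occur.
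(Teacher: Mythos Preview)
Your argument is correct, and it is genuinely different from the paper's. The paper argues via the preceding paragraph: any local geodesic through $\mathrm{st}(v)$ lies in a quadrilateral of four triangles as in Figure~\ref{fig:quadofB}, and then one simply inspects this quadrilateral (equivalently, its image in $\widehat\Sigma(A_3)$) to read off the dichotomy. Your proof instead ignores the geodesic entirely and observes that the conclusion is a purely combinatorial fact about any two edges of $\partial\mathrm{St}(v)$, deduced from $lk(v,\Psi)\cong K_{m,n}$: any two edges of a complete bipartite graph either share a vertex or admit a third edge adjacent to both.

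This buys you a strictly stronger statement (no hypothesis on $\gamma$ is needed beyond determining $e_1,e_2$) with less machinery, and it makes transparent that condition~(\ref{item:developB}) alone drives the lemma. The paper's route, by contrast, sets up the ``developing onto $\widehat\Sigma$'' picture that is reused in Lemma~\ref{lem:travorcut} and later length estimates, so it is doing double duty as narrative infrastructure. Both are fine; yours is the cleaner standalone proof of this particular lemma.
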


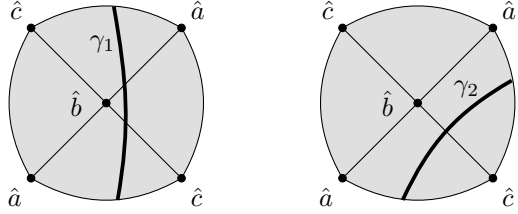
\begin{figure}[ht]
    \centering

    \begin{tikzpicture}
        \matrix[matrix of nodes,column sep=30pt]{
        \begin{tikzpicture}
        \coordinate (A) at (0,0);
        \coordinate (B) at (2,0);
        \coordinate (C) at (0,2);
        \coordinate (D) at (2,2);
        \coordinate (E) at (1,1);

        \draw[fill=gray!25] (A) to[bend right] (B) to[bend right] (D) to[bend right] (C) to[bend right] (A);

        \draw (E) -- (A);
        \draw (E) -- (B);
        \draw (E) -- (C);
        \draw (E) -- (D);

        \filldraw (A) circle (0.05cm);
        \filldraw (B) circle (0.05cm);
        \filldraw (C) circle (0.05cm);
        \filldraw (D) circle (0.05cm);
        \filldraw (E) circle (0.05cm);

        \node[below left] at (A) {$\hat a$};
        \node[above right] at (D) {$\hat a$};
        \node[above left] at (C) {$\hat c$};
        \node[below right] at (B) {$\hat c$};

        \node[left=0.2cm] at (E) {$\hat b$};

        \coordinate (G1) at (1.1,2.285);
        \coordinate (G2) at (1.15,-0.29);
        \draw[line width=0.05cm] (G1) to[in=80,out=-80] (G2);

        \node at (0.95,1.75) {$\gamma_1$};
    \end{tikzpicture}
    &
    \begin{tikzpicture}
        \coordinate (A) at (0,0);
        \coordinate (B) at (2,0);
        \coordinate (C) at (0,2);
        \coordinate (D) at (2,2);
        \coordinate (E) at (1,1);

        \draw[fill=gray!25] (A) to[bend right] (B) to[bend right] (D) to[bend right] (C) to[bend right] (A);

        \draw (E) -- (A);
        \draw (E) -- (B);
        \draw (E) -- (C);
        \draw (E) -- (D);

        \filldraw (A) circle (0.05cm);
        \filldraw (B) circle (0.05cm);
        \filldraw (C) circle (0.05cm);
        \filldraw (D) circle (0.05cm);
        \filldraw (E) circle (0.05cm);

        \node[below left] at (A) {$\hat a$};
        \node[above right] at (D) {$\hat a$};
        \node[above left] at (C) {$\hat c$};
        \node[below right] at (B) {$\hat c$};

        \node[left=0.2cm] at (E) {$\hat b$};

        \coordinate (G1) at (2.25,1.3);
        \coordinate (G2) at (0.8,-0.29);
        \draw[line width=0.05cm] (G1) to[in=65,out=210] (G2);

        \node at (1.65,1.15) {$\gamma_2$};
    \end{tikzpicture} 
    \\
    };
    \end{tikzpicture}
    \caption{Traversing ($\gamma_1$) and cutting a corner ($\gamma_2$) of a $\hat b$ star}
    \label{fig:traverseorcut}
\end{figure}

This follows easily from examining the aforementioned quadrilateral in the Coxeter complex. See Figure \ref{fig:traverseorcut}. The following can be seen in a similar way by developing to the Coxeter complex.

\begin{lemma} \label{lem:travorcut}
    Let $\gamma$ be a local geodesic of $\Psi$ avoiding vertices of type $\hat a$ and $\hat c$. If $\gamma$ traverses (resp, cuts a corner of) some star of a vertex of type $\hat b$, then it traverses (resp, cuts a corner of) every star of a vertex of type $\hat b$ whose interior intersects $\gamma$.
\end{lemma}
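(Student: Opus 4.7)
The plan is to develop $\gamma$ onto the $A_3$ Coxeter complex $\widehat\Sigma = \widehat\Sigma(A_3) \cong S^2$, exactly as done in the discussion preceding Lemma \ref{lem:cutsortraverses}. Since $\gamma$ avoids vertices of type $\hat a$ and $\hat c$ by hypothesis, and since Condition (\ref{item:developB}) of \tempname{} permits developing past any type $\hat b$ vertex via the quadrilateral in Figure \ref{fig:quadofB}, the developed image $\widetilde\gamma$ is an honest local geodesic on $\widehat\Sigma$, i.e., a great circle arc on $S^2$.

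By induction along the finite sequence of consecutive $\hat b$ stars that $\widetilde\gamma$ passes through, it suffices to establish the following two-star version of the claim in $\widehat\Sigma$: if $\widetilde\gamma$ crosses from $\mathrm{St}(v_1)$ into $\mathrm{St}(v_2)$ through a shared boundary edge $f$, then $\widetilde\gamma$ traverses $\mathrm{St}(v_2)$ iff it traverses $\mathrm{St}(v_1)$, and $\widetilde\gamma$ cuts a corner of $\mathrm{St}(v_2)$ iff it cuts a corner of $\mathrm{St}(v_1)$.

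To prove this two-star statement, I would use the transitivity of the $W_{A_3}$-action on ordered pairs of adjacent $\hat b$ stars (the six $\hat b$ stars tile $S^2$ as the faces of a spherical cube, and $W_{A_3}$ acts transitively on adjacent face pairs) to reduce to verifying it in a single concrete configuration. The two key geometric inputs are (a) the $W_{A_3}$-reflection in the wall containing $f$, which swaps $\mathrm{St}(v_1) \leftrightarrow \mathrm{St}(v_2)$ while fixing $f$ pointwise, and thereby sends the edge of $\mathrm{St}(v_1)$ opposite $f$ bijectively to the edge of $\mathrm{St}(v_2)$ opposite $f$ and sends the edges adjacent to $f$ at a given endpoint $u$ to the corresponding edges in the other star; and (b) the reflection symmetry of each $\hat b$ star about the great-circle arc perpendicular to $f$ through its midpoint, which swaps the two endpoints of $f$ along with the boundary vertices across from them. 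Together, these symmetries provide a dictionary between the boundary of $\mathrm{St}(v_1)$ and the boundary of $\mathrm{St}(v_2)$ that preserves the opposite/adjacent dichotomy with respect to $f$, so that the exit edge of $\widetilde\gamma$ from $\mathrm{St}(v_2)$ must have the same combinatorial position relative to $f$ as the entry edge of $\widetilde\gamma$ into $\mathrm{St}(v_1)$.

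I anticipate the main obstacle will be executing the case analysis cleanly and handling the boundary case where $\widetilde\gamma$ passes through the center $v_1$ of one of the stars (which is developed via the quadrilateral from Condition (\ref{item:developB})). With the symmetries above in place, however, this reduces to a concrete spherical computation in $\widehat\Sigma$ that one can check directly.
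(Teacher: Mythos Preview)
Your approach is correct and is essentially the same as the paper's: the paper's entire proof is the one-line remark ``can be seen in a similar way by developing to the Coxeter complex,'' and your proposal is a fleshed-out version of exactly that.

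One small comment on the execution. Your symmetries (a) and (b) give a combinatorial correspondence between $\partial\mathrm{St}(v_1)$ and $\partial\mathrm{St}(v_2)$, but since neither symmetry fixes the great circle $\widetilde\gamma$, they do not by themselves force the exit edge of $\widetilde\gamma$ from $\mathrm{St}(v_2)$ to match the entry edge into $\mathrm{St}(v_1)$. You correctly note at the end that one falls back on a direct spherical computation, and indeed that computation is immediate once you observe the following: in the cubical tiling of $S^2$ by the six $\hat b$ stars, if $\mathrm{St}(v_1)$ and $\mathrm{St}(v_2)$ share the edge $f$, then the two vertices of $\partial\mathrm{St}(v_1)$ not on $f$ are \emph{antipodal} to the two vertices of $\partial\mathrm{St}(v_2)$ not on $f$. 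Since any great circle separates antipodal points, $\widetilde\gamma$ separates the far pair of $\mathrm{St}(v_1)$ if and only if it separates the far pair of $\mathrm{St}(v_2)$; equivalently, it crosses the edge opposite $f$ in one star if and only if it does so in the other. This replaces the symmetry argument with a one-line check and handles the through-the-center case uniformly.
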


The following is one of the main lemmas of this section. It allows us to place a lower bound on the length of geodesics which avoid $\hat a$ and $\hat c$ vertices, as well as allows us to fill in other short loops.

\begin{lemma}
    \label{item:simpleintersection} The (closed) stars of two distinct vertices of type $\hat b$ intersect trivially, in exactly one vertex, or in exactly one edge.
\end{lemma}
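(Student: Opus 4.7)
The plan is to analyze $\mathrm{St}(v_1) \cap \mathrm{St}(v_2)$ cell by cell, using the conditions in \tempname{} to rule out the forbidden configurations. First I observe that no $2$-simplex lies in both stars, because every chamber of $\Psi$ contains exactly one vertex of type $\hat b$ and $v_1 \ne v_2$. Moreover, condition (\ref{item:developB}) says the link of a $\hat b$-vertex contains only $\hat a$- and $\hat c$-vertices, so no edge of $\Psi$ joins two $\hat b$-vertices; in particular $v_i \notin \mathrm{St}(v_j)$ for $i \ne j$. Hence the shared vertex set is precisely $X := \mathrm{lk}(v_1) \cap \mathrm{lk}(v_2)$, and every shared edge lies in both links viewed as graphs on $X$. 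I decompose $X = X_a \sqcup X_c$ by type; the complete bipartite structure of each $\mathrm{lk}(v_i)$ then forces the edge $\{a,c\}$ to be shared whenever $a \in X_a$ and $c \in X_c$.

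The heart of the proof is the claim $|X_a| \le 1$ (with $|X_c| \le 1$ following by symmetry), which I will prove by contradiction. Suppose $w, w' \in X_a$ are distinct. If some $c \in X_c$ exists, then the bipartite structure of each $\mathrm{lk}(v_i)$ yields the triangles $\{v_i, w, c\}$ and $\{v_i, w', c\}$, so $\mathrm{lk}(c)$ contains the $4$-cycle $v_1 {-} w {-} v_2 {-} w' {-} v_1$, contradicting condition (\ref{item:largeaclink}) at the $\hat c$-vertex $c$. If instead $X_c = \emptyset$, condition (\ref{item:developB}) supplies $\hat c$-vertices $c \in \mathrm{lk}(v_1) \setminus \mathrm{lk}(v_2)$ and $c' \in \mathrm{lk}(v_2) \setminus \mathrm{lk}(v_1)$, necessarily distinct. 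Then $w {-} c {-} w' {-} c' {-} w$ is a genuine $4$-cycle in $\Psi$ of the type in condition (\ref{item:fillloop})(i), so it is filled by a central $\hat b$-vertex $b$ giving the four triangles $\{b, w, c\}, \{b, w', c\}, \{b, w', c'\}, \{b, w, c'\}$. One checks directly that $b \ne v_1$ (else $c' \in \mathrm{lk}(v_1)$) and $b \ne v_2$ (else $c \in \mathrm{lk}(v_2)$); then $\mathrm{lk}(c)$ contains the $4$-cycle $v_1 {-} w {-} b {-} w' {-} v_1$, again contradicting condition (\ref{item:largeaclink}).

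With $|X_a|, |X_c| \le 1$ established, the three allowed conclusions follow immediately: $X = \emptyset$ gives an empty intersection, $|X| = 1$ gives a single shared vertex (no edge can appear with only one vertex available), and $X = \{a, c\}$ with one vertex of each type gives exactly the edge $\{a, c\}$ together with its endpoints, and no other shared edge is possible since any shared edge would need both endpoints in $X$. The main obstacle is the subcase $X_c = \emptyset$: here there is no shared $\hat c$-vertex whose link directly exhibits a short cycle, so I must use the filling condition (\ref{item:fillloop})(i) to manufacture a new $\hat b$-vertex $b$ distinct from both $v_1$ and $v_2$, and only then read the girth-violating $4$-cycle off the link of an \emph{unshared} $\hat c$-vertex.
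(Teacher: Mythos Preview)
Your proof is correct and follows essentially the same strategy as the paper: bound the number of shared $\hat a$- and $\hat c$-vertices by one each, using the filling condition (\ref{item:fillloop})(i) to produce an auxiliary $\hat b$-vertex and then reading off a $4$-cycle in the link of a $\hat c$-vertex to contradict condition (\ref{item:largeaclink}). Your explicit case split on whether $X_c$ is empty is a mild refinement---when a shared $\hat c$-vertex $c$ already exists you extract the $4$-cycle $v_1\,w\,v_2\,w'$ in $lk(c)$ directly without invoking the filling condition, whereas the paper always passes through the filling step (and is tacitly assuming its chosen $u_1,u_2$ are distinct); your version sidesteps that small degeneracy.
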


\begin{figure}[!ht]
\vspace{-2em} \centering

\begin{tikzpicture}
\matrix[matrix of nodes,column sep=-2pt,nodes={anchor=center, minimum height=0.1cm, align=flush center}]{
\begin{tikzpicture}[scale=0.85]
\coordinate (A) at (0,0);
\coordinate (B) at (-1,1);
\coordinate (C) at (0,2);
\coordinate (D) at (1,1);
\filldraw (A) circle (0.05cm);
\filldraw (B) circle (0.05cm);
\filldraw (C) circle (0.05cm);
\filldraw (D) circle (0.05cm);
\draw (A) -- (B) -- (C) -- (D) -- (A);
\node[below] at (A) {$w_1$};
\node[left] at (B) {$v_1$};
\node[above] at (C) {$w_2$};
\node[right] at (D) {$v_2$};
\end{tikzpicture}
&
\begin{tikzpicture}[scale=0.9]
\coordinate (A) at (0,0);
\coordinate (B) at (-1.5,1);
\coordinate (C) at (0,2);
\coordinate (D) at (1.5,1);
\coordinate (E) at (0.75,1);
\coordinate (F) at (-0.75,1);
\node[below] at (A) {$w_1$};
\node[left] at (B) {$v_1$};
\node[above] at (C) {$w_2$};
\node[right] at (D) {$v_2$};
\node[right] at (F) {$u_1$};
\node[left] at (E) {$u_2$};
\draw[fill=gray!25] (E) -- (A) -- (D) -- (E);
\draw[fill=gray!25] (E) -- (C) -- (D) -- (E);
\draw[fill=gray!25] (F) -- (A) -- (B) -- (F);
\draw[fill=gray!25] (F) -- (C) -- (B) -- (F);
\filldraw (A) circle (0.05cm);
\filldraw (B) circle (0.05cm);
\filldraw (C) circle (0.05cm);
\filldraw (D) circle (0.05cm);
\filldraw (E) circle (0.05cm);
\filldraw (F) circle (0.05cm);
\end{tikzpicture}
&
\begin{tikzpicture}[scale=0.9]
\coordinate (A) at (0,0);
\coordinate (B) at (-2,1.5);
\coordinate (C) at (0,3);
\coordinate (D) at (2,1.5);
\coordinate (E) at (0.8,1.5);
\coordinate (F) at (-0.8,1.5);
\coordinate (G) at (0,1.5);
\draw[fill=gray!25] (E) -- (A) -- (D) -- (E);
\draw[fill=gray!25] (E) -- (C) -- (D) -- (E);
\draw[fill=gray!25] (F) -- (A) -- (B) -- (F);
\draw[fill=gray!25] (F) -- (C) -- (B) -- (F);
\draw[fill=gray!25] (C) -- (E) -- (A) -- (F) -- (C);
\draw (A) -- (C);
\draw (E) -- (F);
\node[below] at (A) {$w_1$};
\node[left] at (B) {$v_1$};
\node[above] at (C) {$w_2$};
\node[right] at (D) {$v_2$};
\node[above right] at (E) {$u_2$};
\node[above left] at (F) {$u_1$};
\node[below right] at (G) {$v$};
\filldraw (A) circle (0.05cm);
\filldraw (B) circle (0.05cm);
\filldraw (C) circle (0.05cm);
\filldraw (D) circle (0.05cm);
\filldraw (E) circle (0.05cm);
\filldraw (F) circle (0.05cm);
\filldraw (G) circle (0.05cm);
\end{tikzpicture} \\
};
\end{tikzpicture}
\vspace{-3.25em}

\caption{Filling a loop coming from the intersection of two stars}
\label{fig:starloop}
\end{figure}
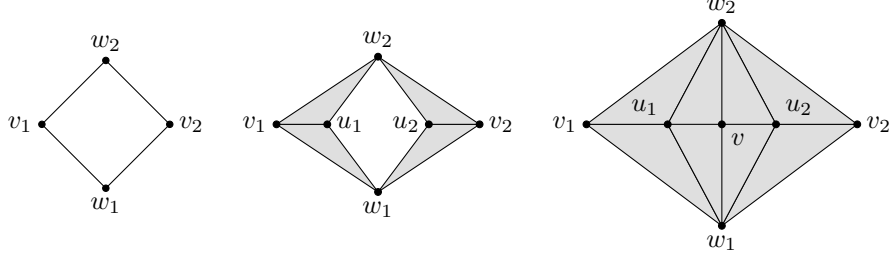

\begin{proof}
    Let $v_1, v_2$ be distinct vertices of type $\hat b$. We first show that $\mathrm{St}(v_1) \cap \mathrm{St}(v_2)$ contains at most one vertex of type $\hat a$ and at most one vertex of type $\hat c$.

    Let $w_1,w_2 \in \mathrm{St}(v_1) \cap \mathrm{St}(v_2)$ be vertices of type $\hat a$. This results in the leftmost edge path in Figure \ref{fig:starloop}.
    By condition (\ref{item:developB}), there are vertices $u_i \in \mathrm{St}(v_i)$ of type $\hat c$ so that $u_i$ is adjacent to $w_1,w_2$ for $i=1,2$. 
    This results in the middle complex of Figure \ref{fig:starloop}.
    The loop consisting of the $w_i$ and $u_i$ is a loop of type (i) (in Figure \ref{fig:shortclosededge}) and thus there is a vertex $v$ of type $\hat b$ so that the middle complex of Figure \ref{fig:starloop} can be augmented to the rightmost complex.

    If $v \neq v_1$ and $w_1 \neq w_2$, then the vertices $v_1, w_1,v,w_2$ give rise to an embedded loop of length $4$ in the link of $u_1$. However, condition (\ref{item:largeaclink}) prevents this from happening, so we must have either $v = v_1$ or $w_1 = w_2$. The same reasoning shows that either $v = v_2$ or $w_1 = w_2$. If $w_1 \neq w_2$, we would have $v_1 = v = v_2$, contradicting the assumption that $v_1 \neq v_2$. So, we must have $w_1 = w_2$ and thus $\mathrm{St}(v_1) \cap \mathrm{St}(v_2)$ contains at most one vertex of type $\hat a$. An analagous argument shows that it contains at most one vertex of type $\hat c$.

    Now suppose $w,u \in \mathrm{St}(v_1) \cap \mathrm{St}(v_2)$ are vertices of type $\hat a$ and $\hat c$, respectively. As noted before, the definition of marked $A_3$ simplicial complex and condition (\ref{item:developB}) imply that the link of a vertex of type $\hat b$ is a join of a set of vertices of type $\hat a$ with a set of vertices of type $\hat c$. This means there is an edge, which is necessarily unique, between $w$ and $u$, and this edge  contained in $\mathrm{St}(v_1) \cap \mathrm{St}(v_2)$. 

    The result follows from noticing that if $e_1,e_2$ are two distinct edges, then their union contains at least three vertices (since $\Psi$ is simplicial), and thus $\mathrm{St}(v_1) \cap \mathrm{St}(v_2)$ cannot contain each of these vertices by our above work, meaning at least one of the edges is not contained in this intersection.
\end{proof}

As a first consequence, we have

\begin{lemma} \label{lem:completinglastpath}
    Any edge path appearing in Figure \ref{fig:anotherpath} is contained in a subcomplex of $\Psi$ found in Figure \ref{fig:anotherpathfilled}.
\end{lemma}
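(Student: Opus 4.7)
The plan is to start with the given edge path and produce the filling by iteratively applying condition (\ref{item:fillloop}) of CCCC together with the star intersection lemma just established. Since we cannot see the exact figure, the strategy is generic: by condition (\ref{item:developB}) every edge path containing $\hat b$ vertices embeds naturally into a union of stars of $\hat b$ vertices, so the first step is to identify the $\hat b$ vertices along the given path and the ``halves'' (i.e., the pairs $\hat a$--$\hat c$ subpaths through them) that determine the stars $\mathrm{St}(v)$ containing successive edges.

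Next, I would use condition (\ref{item:fillloop}) applied to the short subloops of type (i), (ii.a), or (ii.b) visible inside the path to produce the central $\hat b$ (resp.\ $\hat a$, $\hat c$) vertex of each such subloop along with the full 2-simplices filling it. This constructs the ``candidate'' interior vertices for the filling in Figure \ref{fig:anotherpathfilled}. The key subtlety is that the fillings produced by condition (\ref{item:fillloop}) a priori introduce \emph{distinct} $\hat b$ vertices, and we need them to coincide with the $\hat b$ vertices already present on the boundary of the given path (or with each other). This is precisely what Lemma \ref{item:simpleintersection} is tailored for: when two $\hat b$ stars share more than an edge, they must in fact coincide. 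So by checking that the stars produced by the filling share at least two $\hat a$-vertices (or two $\hat c$-vertices, or an $\hat a$--$\hat c$ edge plus a $\hat b$ vertex) with a star that is already on the boundary of the path, we identify the newly produced $\hat b$ vertex with the existing one.

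After the $\hat b$ vertices have been identified, the remaining cells of the desired subcomplex are forced: each 2-simplex in Figure \ref{fig:anotherpathfilled} has its three vertices and the edges among them already present (by the previous paragraph plus condition (\ref{item:developB}), which guarantees that any $\hat a$--$\hat c$ pair in the link of a shared $\hat b$ vertex is joined by an edge and hence spans a 2-simplex together with that $\hat b$). Thus the simplices can be added, completing the filling.

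The main obstacle I anticipate is the bookkeeping in the identification step: one must ensure that applying condition (\ref{item:fillloop}) to two adjacent subloops of the given path produces $\hat b$ vertices whose stars overlap enough to trigger Lemma \ref{item:simpleintersection}. If the overlap in $\hat a$-- and $\hat c$-vertices is only a single vertex, the lemma gives no collapse and one must instead use condition (\ref{item:largeaclink}) (the girth $\geq 6$ condition on links of $\hat a$ and $\hat c$ vertices) to rule out embedded short cycles in those links, forcing the required identifications. In all cases the argument reduces to checking that no 4-cycle appears in the link of an $\hat a$ or $\hat c$ vertex, which is exactly condition (\ref{item:largeaclink}).
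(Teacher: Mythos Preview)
Your overall strategy is correct and matches the paper's approach: use condition~(\ref{item:developB}) to enlarge the picture inside a $\hat b$-star, then apply condition~(\ref{item:fillloop}) to a resulting type~(i) loop, and finally invoke Lemma~\ref{item:simpleintersection} to collapse the new $\hat b$ vertex onto the one already present. However, because you were working without the figure, some of your description does not fit the actual configuration, and one step is stated in a way that would not quite work.

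The path (iii.a) is a single $4$-cycle $\hat b$--$\hat c$--$\hat a$--$\hat c$ (and (iii.b) is its dual). In particular there is only \emph{one} $\hat b$ vertex on the path, so there are no ``adjacent subloops'' and no pair of newly produced $\hat b$ vertices to compare with each other. More importantly, the given $4$-cycle is \emph{not} itself of type (i), (ii.a), or (ii.b), so there is no ``short subloop visible inside the path'' to which condition~(\ref{item:fillloop}) applies directly. The paper's first move is the one you allude to but do not make explicit: since the two $\hat c$ vertices both lie in $\mathrm{St}(\hat b)$, condition~(\ref{item:developB}) produces an auxiliary $\hat a$ vertex in that star adjacent to both of them (together with the two filled triangles). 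Only now do the two $\hat a$ vertices and two $\hat c$ vertices form a type~(i) loop, which condition~(\ref{item:fillloop}) fills with some $\hat b'$. If $\hat b' \neq \hat b$, then $\mathrm{St}(\hat b) \cap \mathrm{St}(\hat b')$ contains both $\hat c$ vertices and the auxiliary $\hat a$, contradicting Lemma~\ref{item:simpleintersection}; hence $\hat b' = \hat b$, and the diagonal $\hat b$--$\hat a$ edge with its two flanking $2$-simplices is already present in $\mathrm{St}(\hat b)$. That is the entire filling. So your plan is right, but the order is: first \emph{create} the type~(i) loop via condition~(\ref{item:developB}), then fill it, then identify.
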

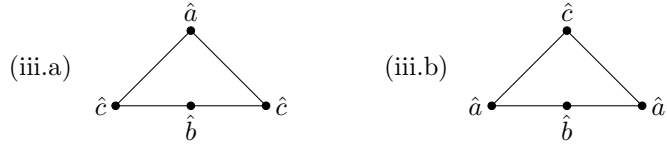
\begin{figure}[ht]
    \begin{tikzpicture}
        \coordinate (A1) at (0,1) ;
        \coordinate (C1) at (1,1) ;
        \coordinate (C2) at (-1,1);
        \coordinate (A2) at (0,2) ;
        
        \filldraw (A1) circle (0.05cm);
        \filldraw (C1) circle (0.05cm);
        \filldraw (C2) circle (0.05cm);
        \filldraw (A2) circle (0.05cm);

        \node[below] at (A1) {$\hat b$};
        \node[right] at (C1) {$\hat c$};
        \node[left]  at (C2) {$\hat c$};
        \node[above] at (A2) {$\hat a$};

        \draw (A1) -- (C1) -- (A2) -- (C2) -- (A1);

        \coordinate (A1b) at  at (5,1) ;
        \coordinate (C1b) at  at (6,1) ;
        \coordinate (C2b) at  at (4,1);
        \coordinate (A2b) at  at (5,2) ;
        
        \filldraw (A1b) circle (0.05cm);
        \filldraw (C1b) circle (0.05cm);
        \filldraw (C2b) circle (0.05cm);
        \filldraw (A2b) circle (0.05cm);

        \node[below] at (A1b) {$\hat b$};
        \node[right] at (C1b) {$\hat a$};
        \node[left]  at (C2b) {$\hat a$};
        \node[above] at (A2b) {$\hat c$};

        \draw (A1b) -- (C1b) -- (A2b) -- (C2b) -- (A1b);

        \node at (-2,1.5) {(iii.a)};
        \node at (3,1.5) {(iii.b)};
    \end{tikzpicture}
    \caption{Two more short closed edge paths}
    \label{fig:anotherpath}
\end{figure}
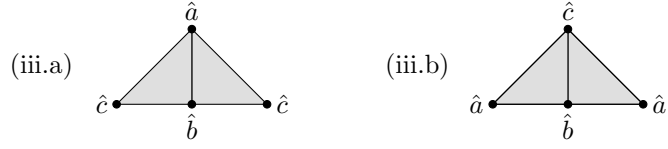
\begin{figure}[ht]
    \begin{tikzpicture}
        \coordinate (A1) at (0,1) ;
        \coordinate (C1) at (1,1) ;
        \coordinate (C2) at (-1,1);
        \coordinate (A2) at (0,2) ;
        \draw[fill=gray!25] (A1) -- (C1) --(A2) -- (A1);
        \draw[fill=gray!25] (C2) -- (A2) --(A1) -- (C2);
        
        \filldraw (A1) circle (0.05cm);
        \filldraw (C1) circle (0.05cm);
        \filldraw (C2) circle (0.05cm);
        \filldraw (A2) circle (0.05cm);

        \node[below] at (A1) {$\hat b$};
        \node[right] at (C1) {$\hat c$};
        \node[left]  at (C2) {$\hat c$};
        \node[above] at (A2) {$\hat a$};

        \coordinate (A1b) at  at (5,1) ;
        \coordinate (C1b) at  at (6,1) ;
        \coordinate (C2b) at  at (4,1);
        \coordinate (A2b) at  at (5,2) ;
        \draw[fill=gray!25] (A1b) -- (C1b) --(A2b) -- (A1b);
        \draw[fill=gray!25] (C2b) -- (A2b) --(A1b) -- (C2b);
        
        \filldraw (A1b) circle (0.05cm);
        \filldraw (C1b) circle (0.05cm);
        \filldraw (C2b) circle (0.05cm);
        \filldraw (A2b) circle (0.05cm);

        \node[below] at (A1b) {$\hat b$};
        \node[right] at (C1b) {$\hat a$};
        \node[left]  at (C2b) {$\hat a$};
        \node[above] at (A2b) {$\hat c$};

        \draw (A1b) -- (C1b) -- (A2b) -- (C2b) -- (A1b);

        \node at (-2,1.5) {(iii.a)};
        \node at (3,1.5) {(iii.b)};
    \end{tikzpicture}
    \caption{Filling the edge paths}
    \label{fig:anotherpathfilled}
\end{figure}
\begin{proof}
    We will deal with the left loop of Figure \ref{fig:anotherpath}; the argument for the other loop is identical. By condition (\ref{item:developB}), we know that this loop is a subcomplex (indicated in bold) of the leftmost complex in Figure \ref{fig:fillingpart1}, since two $\hat c$ vertices in a star of a $\hat b$ vertex must be connected by an $\hat a$ vertex (which is also in this star).
    \begin{figure}[!ht]
    \begin{tikzpicture}
    \matrix[matrix of nodes,column sep=12pt,nodes={anchor=center, minimum height=0.1cm, align=flush center}]{
    \begin{tikzpicture}
    \coordinate (A1) at (0,1) ;
    \coordinate (C1) at (1,1) ;
    \coordinate (C2) at (-1,1);
    \coordinate (A2) at (0,2) ;
    \coordinate (A) at (0,0)  ;

    \draw[fill=gray!25] (C1) -- (A1) -- (A) -- (C1);
    \draw[fill=gray!25] (C2) -- (A1) -- (A) -- (C2);
    
    \filldraw (A1) circle (0.05cm);
    \filldraw (C1) circle (0.05cm);
    \filldraw (C2) circle (0.05cm);
    \filldraw (A2) circle (0.05cm);
    \filldraw (A) circle (0.05cm);

    \node[above] at (A1) {$\hat b$};
    \node[right] at (C1) {$\hat c$};
    \node[left]  at (C2) {$\hat c$};
    \node[above] at (A2) {$\hat a$};
    \node[below] at (A)  {$\hat a$};

    \draw[line width=0.04cm]  (A1) -- (C1) -- (A2) -- (C2) -- (A1);
    \end{tikzpicture}
    &
    \begin{tikzpicture}
    \coordinate (A1) at (0,1) ;
    \coordinate (C1) at (1,1) ;
    \coordinate (C2) at (-1,1);
    \coordinate (A2) at (0,2) ;
    \coordinate (A) at (0,0)  ;
    \draw (A1) -- (A2);
    \draw[fill=gray!25] (A2) -- (C1) -- (A) -- (C2) -- (A2);
    \draw (A2) -- (A);
    
    \filldraw (A1) circle (0.05cm);
    \filldraw (C1) circle (0.05cm);
    \filldraw (C2) circle (0.05cm);
    \filldraw (A2) circle (0.05cm);
    \filldraw (A) circle (0.05cm);

    \node[above right] at (A1) {$\hat b$};
    \node[right] at (C1) {$\hat c$};
    \node[left]  at (C2) {$\hat c$};
    \node[above] at (A2) {$\hat a$};
    \node[below] at (A)  {$\hat a$};

    \draw[line width=0.04cm]  (A1) -- (C1) -- (A2) -- (C2) -- (A1);
    \end{tikzpicture}
    \\};
    \end{tikzpicture}
    \caption{Completing the loop}
    \label{fig:fillingpart1}
    \end{figure}
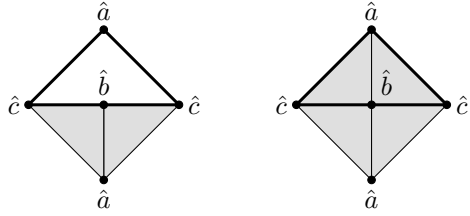
    But now the vertices of type $\hat a$ and $\hat c$ form a loop of type (i) (Figure \ref{fig:shortclosededge}), and in particular are contained in the star of a common $\hat b$ vertex. If this vertex were distinct from the one already included in our original loop, this would contradict Lemma \ref{item:simpleintersection}. So the complex on the left of Figure \ref{fig:fillingpart1} can be completed to the complex on the right, giving the desired result.
\end{proof}

We now show that the loops considered so far are the only possible short loops that can arise.

\begin{lemma} \label{lem:onlyshortloops}
    Suppose $p$ is a simple closed edge path of $\Psi$. Then $\ell(p) < 2\pi$ (where $\ell$ is the length function) if and only if $p$ is one of the paths of type (i), (ii), or (iii) in Figures \ref{fig:shortclosededge} or \ref{fig:anotherpath}.
\end{lemma}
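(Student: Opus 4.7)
The plan for proving this lemma is to first verify the ``if'' direction by direct perimeter calculation, and then establish the ``only if'' direction by a finite enumeration of short cyclic vertex-type sequences, using Lemma \ref{item:simpleintersection} together with the conditions of \tempname{} to eliminate or classify each possibility.

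For the ``if'' direction, I would apply spherical trigonometry to the marked simplex $\Delta$ (with angles $\pi/3,\pi/2,\pi/3$) to compute edge lengths: the edges of type $a$ and $c$ each have length $\alpha := \arccos(1/\sqrt{3})$, while the edge of type $b$ has length $\beta := \arccos(1/3)$. The double-angle identity $\cos(2\alpha) = -1/3$ gives the useful relation $2\alpha+\beta = \pi$. The perimeters of the displayed loops are then $4\beta$, $6\alpha$, and $2\alpha+2\beta$, each strictly less than $2\pi$.

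For the converse, suppose $p$ is a simple closed edge path with $\ell(p) < 2\pi$. Since every edge has length at least $\alpha$ and $6\alpha < 2\pi < 7\alpha$, $p$ uses at most six edges. I would enumerate the admissible cyclic sequences of vertex types in $\{\hat a,\hat b,\hat c\}$ of length between $3$ and $6$ with consecutive entries distinct, and in each case either identify $p$ with one of the listed loops or derive a contradiction. The case analysis divides naturally by which vertex types appear on $p$. If $\hat b$ is absent, $p$ alternates $\hat a$ and $\hat c$ and has length $2k\beta < 2\pi$, forcing $k = 2$ and giving type (i). If $\hat c$ (resp.\ $\hat a$) is absent, $p$ alternates $\hat a$ (resp.\ $\hat c$) with $\hat b$ and has length $2k\alpha < 2\pi$, so $k \leq 3$; Lemma \ref{item:simpleintersection} rules out $k = 2$ because two distinct $\hat b$-vertices cannot share two common $\hat a$-neighbors, leaving $k = 3$ and yielding type (ii.a) (resp.\ (ii.b)). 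When all three types appear on $p$, a direct enumeration shows that the only simple possibilities are the $4$-cycles $\hat b\hat c\hat a\hat c$ and $\hat b\hat a\hat c\hat a$, i.e., types (iii.a) and (iii.b); any candidate shorter cycle or alternative $4$-cycle (for instance the triangle $\hat a\hat b\hat c$ or the cycle $\hat b\hat a\hat b\hat c$) already bounds a union of $2$-simplices in $\Psi$ by condition (\ref{item:developB}) applied to its $\hat b$-vertices, hence is null-homotopic and treated as filled in.

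The principal obstacle is the enumeration of cyclic vertex-type sequences of length $5$ or $6$ mixing all three types. My strategy is to locate two distinct $\hat b$-vertices on $p$ and apply Lemma \ref{item:simpleintersection}: their closed stars meet in at most a single edge, so the pattern of common neighbors along $p$ is sharply restricted, ultimately forcing either a repetition of vertices (contradicting simplicity) or a reduction to a previously handled case. The filling results of condition (\ref{item:fillloop}) together with Lemma \ref{lem:completinglastpath} then confirm that every remaining essential short simple closed edge path matches one of the displayed types.
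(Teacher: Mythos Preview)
Your enumeration approach has a genuine gap, and it stems from missing the one observation that makes the paper's proof work. The paper does not attempt to classify all simple closed edge paths; it first observes that at a vertex of type $\hat b$ the path must go $\hat a$--$\hat b$--$\hat a$ or $\hat c$--$\hat b$--$\hat c$, never $\hat a$--$\hat b$--$\hat c$. This is a local-geodesic constraint: in the link of a $\hat b$-vertex (complete bipartite with edge length $\pi/2$) an $\hat a$-vertex and a $\hat c$-vertex are at distance $\pi/2<\pi$, so an edge path turning from $\hat a$ to $\hat c$ at $\hat b$ is not locally geodesic there. Once this is in place, every $\hat b$-vertex sits in the middle of a block of length $2\alpha$ with matching endpoints, and the path decomposes into $b$-edges of length $\beta$ and such blocks; the enumeration then reduces to the short argument in \cite[Thm.~5.4]{charney2004deligne}.

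Without this observation your case analysis does not close. Concretely, the cyclic type sequence $\hat a\,\hat b\,\hat a\,\hat b\,\hat c$ gives a $5$-cycle of length $4\alpha+\beta=2\alpha+\pi<2\pi$ (using your relation $2\alpha+\beta=\pi$), and nothing in Lemma~\ref{item:simpleintersection} or conditions (\ref{item:largeaclink})--(\ref{item:fillloop}) prevents this from occurring as a \emph{simple} closed edge path in $\Psi$. The same goes for the boundary $\hat a\,\hat b\,\hat c$ of a single $2$-simplex, which has length $2\alpha+\beta=\pi$. Your move of declaring such loops ``null-homotopic and treated as filled in'' is not an argument for the lemma as stated; it changes the statement. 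In fact these examples show the lemma is literally false for arbitrary simple closed edge paths, and both the paper and \cite{charney2004deligne} are tacitly working with locally geodesic edge loops (which is all that is needed in the proof of Theorem~\ref{thm:charneycat1}). So the fix is not to patch the enumeration but to add the $\hat b$-vertex constraint at the outset; after that, your length bounds and the use of Lemma~\ref{item:simpleintersection} for the $4$-cycle case are fine and the remaining enumeration collapses to the three displayed families. (Incidentally, your $\alpha,\beta$ are swapped relative to the paper's notation, but your values are the correct ones for the $A_3$ simplex.)
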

\begin{proof}
    An edge of $p$ from a vertex of type $\hat a$ (or $\hat c$) to $\hat b$ is immediately followed by an edge from a vertex of type $\hat b$ to $\hat a$ (resp.~$\hat c$). This is a consequence of condition (\ref{item:developB}) and the fact that the length of edges of the link is $\pi/2$. Moreover, as a consequence of the definition of a marked $A_3$ simplicial complex, vertices of type $\hat a$ or $\hat c$ cannot be joined by an edge to a vertex of the same type. Thus $p$ consists of edges between $\hat a$ vertices and $\hat c$ vertices, the length of which is $\beta \coloneqq \arccos(1/\sqrt{3})$, and pairs of edges from $\hat a$ to $\hat b$ to $\hat a$, or from $\hat c$ to $\hat b$ to $\hat c$, the total length of which is $2\alpha$, where $\alpha \coloneqq \arccos(1/3)$. We conclude by an argument completely identical to that found in \cite[Pf. of Thm. 5.4]{charney2004deligne} that the only possible edge paths of length less than $2\pi$ are those in Figures \ref{fig:shortclosededge} or \ref{fig:anotherpath} (which are those of Figure 7 of \cite{charney2004deligne}), as desired.
\end{proof}

We are now ready to prove the main theorem of the section. The argument is completely analagous to that of \cite[Pf. of Thm. 5.4]{charney2004deligne}. We give the argument here to clarify that it works in this more general setting.

\begin{proof}[Proof (of Theorem \ref{thm:charneycat1})]
    First we note that the links of vertices are $\mathrm{CAT}(1)$. For a vertex $v$ of type $\hat a$ or $\hat c$, the canonical metric places a length of $\pi/3$ on the edges of $lk(v,\Psi)$. Condition (\ref{item:largeaclink}) guarantees the (combinatorial) girth of this link is 6, and hence the length of any nontrivial closed loop in $lk(v,\Psi)$ is at least $6\pi/3 = 2\pi$. If $v$ is a vertex of type $\hat b$, then the canonical metric places a length of $\pi/2$ on the edges of $lk(v,\Psi)$. Since this link is bipartite, it has girth at least $4$, so the length of any nontrivial closed loop in $lk(v,\Psi)$ is at least $4\pi/2 = 2\pi$. Hence the vertex links are $\mathrm{CAT}(1)$. It remains to show that if $\gamma$ is a closed geodesic of $\Psi$, then $\ell(\gamma) \geq 2\pi$. 
    So, suppose $\gamma$ is a closed geodesic. There are three cases to consider.

    First assume $\gamma$ is an edge path. If $\ell(\gamma) < 2\pi$, then Lemma \ref{lem:onlyshortloops} implies $\gamma$ is one of the paths of type (i), (ii), or (iii) (Figures \ref{fig:shortclosededge} and \ref{fig:anotherpath}). Condition (\ref{item:fillloop}) (resp.~Lemma \ref{lem:completinglastpath}) guarantees that the paths of type (i) and (ii) (resp.~(iii)) are not locally geodesic at any vertex of type $\hat a$ or $\hat c$, and thus we must have $\ell(\gamma) \geq 2\pi$. 

    Now assume $\gamma$ intersects the interior of at least one 2-cell and does not intersect any vertices of type $\hat a$ or $\hat c$. 
    Let $v_1,\dots,v_n$ denote the distinct vertices of type $\hat b$ such that $\gamma \cap st(v_i) \neq \varnothing$.
    Since such a star is $\mathrm{CAT}(1)$ and has diameter $< \pi$, we know $n \neq 1$.
    By Lemmas \ref{lem:cutsortraverses} and \ref{item:simpleintersection}, $\gamma$ cannot close up after intersecting only two such stars, so $n \neq 2$.
    By Lemma \ref{lem:travorcut}, there are two cases now to consider; either $\gamma$ traverses every $\mathrm{St}(v_i)$  or cuts a corner of every $\mathrm{St}(v_i)$.

    \begin{figure}[ht]
        \centering
        \begin{tikzpicture}[scale=0.8]
        \coordinate (A1) at (0,0);
        \coordinate (C1) at (2,0);
        \coordinate (A2) at (4,0);
        \coordinate (C2) at (6,0);
        \coordinate (A3) at (6,2);
        \coordinate (C3) at (4,2);
        \coordinate (A4) at (2,2);
        \coordinate (C4) at (0,2);
        \coordinate (v1) at (1,1);
        \coordinate (v2) at (3,1);
        \coordinate (v3) at (5,1);
        \draw[fill=gray!25] (A1) --
            (C1) --
            (A2) --
            (C2) --
            (A3) --
            (C3) --
            (A4) --
            (C4) -- (A1);
        \draw (C1) -- (A4);
        \draw (C3) -- (A2);
        \draw (C4) -- (C1) -- (C3) -- (C2);
        \draw (A1) -- (A4) -- (A2) -- (A3);

        \usetikzlibrary{decorations.markings}

        \begin{scope}[decoration={
            markings,
            mark=at position 0.5 with {\arrow[line width=0.05cm]{stealth}}}
            ] 
            \draw[postaction={decorate}] (A1) -- (C4);
            \draw[postaction={decorate}] (A3) -- (C2);
        \end{scope}

        \filldraw (A1) circle (0.05cm);
        \filldraw (C1) circle (0.05cm);
        \filldraw (A2) circle (0.05cm);
        \filldraw (C2) circle (0.05cm);
        \filldraw (A3) circle (0.05cm);
        \filldraw (C3) circle (0.05cm);
        \filldraw (A4) circle (0.05cm);
        \filldraw (C4) circle (0.05cm);
        \filldraw (v1) circle (0.05cm);
        \filldraw (v2) circle (0.05cm);
        \filldraw (v3) circle (0.05cm);

        \node[below] at (A1) {$\hat a$};
        \node[below] at (A2) {$\hat a$};
        \node[above] at (A3) {$\hat a$};
        \node[above] at (A4) {$\hat a$};
        \node[below] at (C1) {$\hat c$};
        \node[below] at (C2) {$\hat c$};
        \node[above] at (C3) {$\hat c$};
        \node[above] at (C4) {$\hat c$};

        \node[right] at (v1) {$v_1$};
        \node[right] at (v2) {$v_2$};
        \node[right] at (v3) {$v_3$};

        \draw[dashed] (0,0.45) -- (6,1.75);

        \draw[line width=0.04cm] (C2) -- (A3) -- (C4);

        \end{tikzpicture}
        \caption{The case $n = 3$}
        \label{fig:threestars}
    \end{figure}
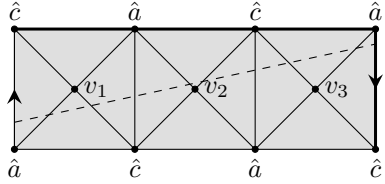
    Suppose first that $\gamma$ traverses these stars. We claim that $n \neq 3$. 
    To the contrary, if $n = 3$ then we have a subcomplex of $\Psi$ seen in Figure \ref{fig:threestars}, with $\gamma$ indicated by a dashed line and the edges marked with arrows identified. This results in an edge path in the union of the $\mathrm{St}(v_i)$ which forms a path of type (i) above but is not contained in any of the $\mathrm{St}(v_i)$, indicated in bold in Figure \ref{fig:threestars}.
    By condition (\ref{item:fillloop}), it follows that this path is contained in some $\mathrm{St}(v')$ for a vertex $v'$ of type $\hat b$ distinct from each $v_i$. 
    But then $\mathrm{St}(v')$ would intersect $\mathrm{St}(v_3)$ in more than one edge, contradicting Lemma \ref{item:simpleintersection}. 
    By developing onto $\widehat \Sigma$, any geodesic which traverses 4 quadrilaterals of the type in Figure \ref{fig:quadofB} has length no less than $2\pi$.

Now suppose $\gamma$ cuts corners. 
We claim $n \geq 6$, resulting in a complex of the form in Figure \ref{fig:sixstars}, with $\gamma$ represented by a dashed line.
\begin{figure}[ht]
    \begin{tikzpicture}[scale=0.8]
        \coordinate (A1) at (0,0);
        \coordinate (A2) at (2,2);
        \coordinate (A3) at (4,4);
        \coordinate (A4) at (6,6);
        \coordinate (C1) at (0,2);
        \coordinate (C2) at (2,4);
        \coordinate (C3) at (4,6);
        \coordinate (C4) at (8,6);

        \coordinate (A5) at (8,4);
        \coordinate (C5) at (6,4);
        \coordinate (A6) at (6,2);
        \coordinate (C6) at (4,2);
        \coordinate (A7) at (4,0);
        \coordinate (C7) at (2,0);

        \coordinate (v1) at (1,1);
        \coordinate (v2) at (3,1);
        \coordinate (v3) at (3,3);
        \coordinate (v4) at (5,3);
        \coordinate (v5) at (5,5);
        \coordinate (v6) at (7,5);

        \draw[fill=gray!25]
        (A1) -- (C1) --
        (A2) -- (C2) --
        (A3) -- (C3) --
        (A4) -- (C4) --
        (A5) -- (C5) --
        (A6) -- (C6) --
        (A7) -- (C7) -- (A1);

        \draw (A2) -- (C7);
        \draw (A2) -- (C6);
        \draw (A3) -- (C6);
        \draw (A3) -- (C5);
        \draw (A4) -- (C5);

        \draw (C1) -- (C7);
        \draw (C2) -- (C6);
        \draw (C3) -- (C5);
        \draw (A2) -- (A7);
        \draw (A3) -- (A6);
        \draw (A4) -- (A5);

        \draw (A1) -- (A2);
        \draw (A3) -- (A2);
        \draw (A3) -- (A4);
        \draw (C7) -- (C6);
        \draw (C5) -- (C6);
        \draw (C5) -- (C4);

        \filldraw (A1) circle (0.05cm);
        \filldraw (A2) circle (0.05cm);
        \filldraw (A3) circle (0.05cm);
        \filldraw (A4) circle (0.05cm);
        \filldraw (A5) circle (0.05cm);
        \filldraw (A6) circle (0.05cm);
        \filldraw (A7) circle (0.05cm);
        \filldraw (C1) circle (0.05cm);
        \filldraw (C2) circle (0.05cm);
        \filldraw (C3) circle (0.05cm);
        \filldraw (C4) circle (0.05cm);
        \filldraw (C5) circle (0.05cm);
        \filldraw (C6) circle (0.05cm);
        \filldraw (C7) circle (0.05cm);

        \filldraw (v1) circle (0.05cm);
        \filldraw (v2) circle (0.05cm);
        \filldraw (v3) circle (0.05cm);
        \filldraw (v4) circle (0.05cm);
        \filldraw (v5) circle (0.05cm);
        \filldraw (v6) circle (0.05cm);

        \node[left=0.05cm] at (v1) {$v_1$};
        \node[right=0.05cm] at (v2) {$v_2$};
        \node[left=0.05cm] at (v3) {$v_3$};
        \node[right=0.05cm] at (v4) {$v_4$};
        \node[left=0.05cm] at (v5) {$v_5$};
        \node[right=0.05cm] at (v6) {$v_6$};

        \draw[dashed] (0.4,0) -- (7.5,6);

        \draw (C5) circle (0.15cm);
        \draw (C6) circle (0.15cm);
        \draw (C7) circle (0.15cm);
    \end{tikzpicture}
    \caption{A geodesic cutting corners}
    \label{fig:sixstars}
\end{figure}
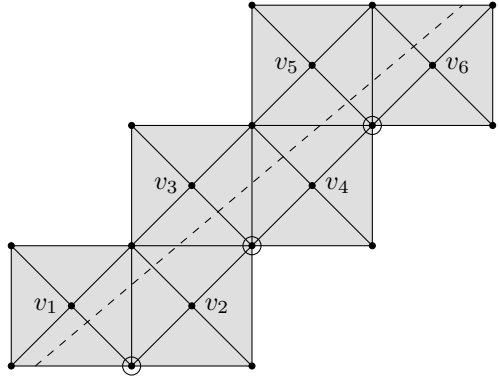
If $\gamma$ closes up after cutting less than 6 corners, then two of the three circled vertices in Figure \ref{fig:sixstars} would be identified, which cannot happen (either because $\Psi$ is simplicial and each simplex in the diagram is distinct, or because of Lemma \ref{item:simpleintersection}). Hence we must have $n \geq 6$. By developing, a geodesic which cuts at least 6 corners has length $\geq 2\pi$.

    Last, assume $\gamma$ intersects at least one vertex of type $\hat a$ or $\hat c$ but is not an edge path. Decompose $\gamma$ into the concatination of segments $\gamma = \gamma_0\gamma_1\dots\gamma_n$ with no vertices of type $\hat a$ or $\hat c$ in the interior of the $\gamma_i$. 
    Note that at least one of the $\gamma_i$ contains no edges of $\Psi$. 
    By developing we see that any such $\gamma_i$ is half a great circle from a vertex of type $\hat a$ or $\hat c$ to a vertex of type $\hat c$ or $\hat a$, respectively,  
    so the length of $\gamma_i$ is $\pi$---see Figure \ref{fig:nonedgegeo}.
    Thus if there are two segments of this type, $\ell(\gamma) \geq 2\pi$, so suppose there is exactly one segment $\gamma_i$ which does not contain an edge of $\Psi$.
    We may assume $\gamma_0$ is this segment. 
    Notice in particular that since the endpoints of $\gamma_0$ always have different type, $\gamma_0$ cannot be closed, or in other words $n > 0$.

    \begin{figure}[ht]
        \centering

        \begin{tikzpicture}
            \coordinate (A1) at (0.2,0);
            \coordinate (A2) at (5.25,0);
            \coordinate (B1) at (2,-1);
            \coordinate (C1) at (4,-0.9);
            \coordinate (C2) at (1.5,0.8);
            \coordinate (B2) at (3.5,1);

            \draw[fill=gray!25] (A1) to[out=-50, in=170] (B1) to[out=-5,in=195] (C1) to[out=20,in=230] (A2) to[out=130,in=-10] (B2) to[out=175,in=15] (C2) to[out=200,in=40] (A1);

            \filldraw (A1) circle (0.05cm);
            \filldraw (A2) circle (0.05cm);
            \filldraw (B1) circle (0.05cm);
            \filldraw (C1) circle (0.05cm);
            \filldraw (B2) circle (0.05cm);
            \filldraw (C2) circle (0.05cm);

            \draw (C2) to[out=-85,in=115] (B1) -- (B2) to[out=-65,in=95] (C1);

            \node[left] at (A1) {$\hat a$};
            \node[right] at (A2) {$\hat c$};
            \node[below] at (B1) {$\hat c$};
            \node[below] at (C1) {$\hat b$};
            \node[above] at (C2) {$\hat b$};
            \node[above] at (B2) {$\hat a$};

            \draw[line width=0.065cm] (A1) to[out=15,in=165] (A2);

            \node at (4.4,0) {$\gamma_0$};
        \end{tikzpicture}
        \caption{A geodesic segment $\gamma_0$ in $\Psi$ which is not an edge path}
        \label{fig:nonedgegeo}
    \end{figure}
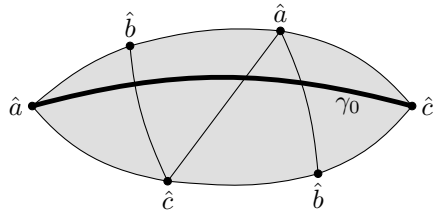

    Let $\overline \gamma_0$ be the image of $\gamma_0$ under the development to $\widehat \Sigma$. 
    Rotate $\overline \gamma_0$ within $\widehat \Sigma$ relative to its endpoints until it becomes an edge path with vertices of type $\hat a$ or $\hat c$ in its interior. 
    Then lift this rotation to $\gamma_0'$ in $\Psi$ so that its endpoints agree with the original endpoints of $\gamma_0$ (see Figure \ref{fig:rotatedgeo}). This new path is clearly locally geodesic in its interior and has the same length as $\gamma_0$. Let $\gamma' = \gamma_0'\gamma_1\dots \gamma_n$. Note that $\ell(\gamma') = \ell(\gamma)$. We claim that $\gamma'$ is locally geodesic. 
    To do this, we only need to check the endpoints of $\gamma_0'$.

    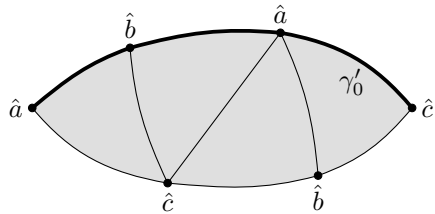
\begin{figure}[ht]
        \centering

        \begin{tikzpicture}
            \coordinate (A1) at (0.2,0);
            \coordinate (A2) at (5.25,0);
            \coordinate (B1) at (2,-1);
            \coordinate (C1) at (4,-0.9);
            \coordinate (C2) at (1.5,0.8);
            \coordinate (B2) at (3.5,1);

            \draw[fill=gray!25] (A1) to[out=-50, in=170] (B1) to[out=-5,in=195] (C1) to[out=20,in=230] (A2) to[out=130,in=-10] (B2) to[out=175,in=15] (C2) to[out=200,in=40] (A1);

            \draw[line width=0.05cm] (A2) to[out=130,in=-10] (B2) to[out=175,in=15] (C2) to[out=200,in=40] (A1);

            \filldraw (A1) circle (0.05cm);
            \filldraw (A2) circle (0.05cm);
            \filldraw (B1) circle (0.05cm);
            \filldraw (C1) circle (0.05cm);
            \filldraw (B2) circle (0.05cm);
            \filldraw (C2) circle (0.05cm);

            \draw (C2) to[out=-85,in=115] (B1) -- (B2) to[out=-65,in=95] (C1);

            \node[left] at (A1) {$\hat a$};
            \node[right] at (A2) {$\hat c$};
            \node[below] at (B1) {$\hat c$};
            \node[below] at (C1) {$\hat b$};
            \node[above] at (C2) {$\hat b$};
            \node[above] at (B2) {$\hat a$};

            \node at (4.45,0.35) {$\gamma_0'$};
        \end{tikzpicture}
        \caption{A possible rotation of $\gamma_0$ in $\Psi$}
        \label{fig:rotatedgeo}
    \end{figure}

    Since $\gamma_0$ is the unique non-edge path in the decomposition of $\gamma$,
    we know that $\gamma_1$ and $\gamma_n$ are both edge paths in $\Psi$. (The case $n = 1$ is acceptable.) 
    Let $v_0$ be the vertex between $\gamma_0$ and $\gamma_1$, and let $v_n$ be the vertex between $\gamma_n$ and $\gamma_0$ (if $n = 1$, let these vertices be the distinct endpoints of $\gamma_0$). 
    Consider $lk(v_j,\Psi)$ for $j=1$ and $j=n$. 
    The intersection of the $\varepsilon$-sphere of $v_j$ with $\gamma_0$ and $\gamma_j$ gives points in $lk(v_j,\Psi)$ which are distance at least $\pi$ apart. 
    Since $\gamma_j$ is an edge path, it gives a vertex $v'_j$ of $lk(v_j,\Psi)$, and since $\gamma_i$ is not an edge path it gives a point $p_j$ in the interior of an edge $e_j$ of $lk(v_j,\Psi)$. Since $p_j$ is not a vertex and the length of the edges of the link are $\pi/3$, we must have that $d(v_j', p_j) > \pi$ in the link, and 
    moreover, both vertices of $e_j$ are distance $\geq \pi$ from $v_j'$.
    But now the rotation $\gamma_0'$ gives points $p_j'$ in $lk(v_j,\Psi)$ which are vertices of the edge $e_j$, and hence has distance at least $\pi$ from $v_j'$. This implies that $\gamma'$ is locally geodesic at $v_j$ for $j=1$ and $j=n$. Hence $\gamma'$ is a closed local geodesic in $\Psi$.
    But $\gamma'$ is also an edge path, so by our previous remarks, we must have $\ell(\gamma) = \ell(\gamma') \geq 2\pi$.

    This exhausts all possibilities for $\gamma$, so we see that any local geodesic loop must have length $\geq 2\pi$, implying $\Psi$ is $\mathrm{CAT}(1)$.
\end{proof}

\section{The Moussong metric}
\label{sec:moussong}

In this section, we complete the proof of Theorem \ref{thm:moussong}.
Our key lemma is the following, which we spend the section proving.

\begin{lemma} \label{lem:Gcat1}
    If $G_\Gamma$ is finite with $\Gamma$ connected and not type $A_4(3)$, then $\widehat \Theta_\Gamma$ is $\mathrm{CAT}(1)$ under the Moussong metric.
\end{lemma}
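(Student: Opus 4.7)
I would prove this by induction on $n = |S|$, the number of vertices of $\Gamma$. The base case $n = 1$ is trivial since $\widehat \Theta_\Gamma$ is then a discrete set of points. For $n = 2$ (the $I_2(p,m,q)$ case), $\widehat \Theta_\Gamma$ is the one-dimensional bipartite incidence graph of the associated complex polygon; its girth is $2m$ and each edge has length $\pi/m$ in the Moussong metric, so the shortest closed loop has length exactly $2\pi$, giving $\mathrm{CAT}(1)$.

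For the inductive step with $n \geq 3$, I would first show that all vertex links of $\widehat \Theta_\Gamma$ are $\mathrm{CAT}(1)$. By Lemma \ref{lem:linkofcoxcomp}, the link of a vertex of type $\widehat{s}$ in $\widehat \Theta_\Gamma$ is isometric to $\widehat \Theta_{\Gamma \setminus \Gamma(\{s\})}$, and by Corollary \ref{cor:ThetaSplits} this decomposes as a spherical join over the connected components of $\Gamma \setminus \Gamma(\{s\})$. Each such component is a proper subdiagram of $\Gamma$, hence a finite Shephard diagram by Proposition \ref{prop:shepdevel}; moreover, inspection of Tables \ref{tab:finitediagrams} and \ref{tab:finiteshephards} shows that $A_4(3)$ never appears as a proper subdiagram of another finite Shephard diagram, so the inductive hypothesis applies to each factor, and since spherical joins preserve $\mathrm{CAT}(1)$, the vertex links are all $\mathrm{CAT}(1)$.

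By Bowditch's criterion, it then remains to rule out closed geodesics in $\widehat \Theta_\Gamma$ of length strictly less than $2\pi$. Here the argument splits by the type of $\Gamma$: (i) when $\Gamma$ is a finite Coxeter diagram, $\widehat \Theta_\Gamma = \widehat \Sigma_\Gamma$ is isometric to a round sphere and the claim is immediate; (ii) for $\Gamma = B_n(p,2)$ or $\Gamma = B_3(2,3)$, the chambers are non-$A_3$ spherical simplices and I would analyze the face incidence of the associated regular complex polytope directly, using the nested product structure from Definition \ref{def:prodpoly} and Proposition \ref{prop:shepdevel} to reduce to understanding how closed geodesics traverse stars of vertices; (iii) for $\Gamma = A_3(3)$, the underlying Coxeter diagram is $A_3$, so the chambers of $\widehat \Theta_\Gamma$ are marked $A_3$ simplices, and I would apply Theorem \ref{thm:firstCharney}, verifying the three conditions of CCCC (Definition \ref{def:cccc}) from the combinatorics of the Hessian complex polytope compiled in Appendix \ref{sec:hessiancomb}.

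The main obstacle is case (iii) for $A_3(3)$: of the CCCC axioms, the girth bound on links of $\widehat{a}$ and $\widehat{c}$ vertices and the complete bipartite structure on links of $\widehat{b}$ vertices follow from the incidence geometry of medial figures, but the \emph{filling} condition on the short $4$- and $6$-cycles of Figure \ref{fig:shortclosededge} requires precise control of the $2$-face lattice of the Hessian polytope and is the technical heart of the argument. This is also where the exclusion of $A_4(3)$ becomes visible: in $\widehat \Theta_{A_4(3)}$, an analogous filling fails, producing closed geodesics of length less than $2\pi$ and obstructing the $\mathrm{CAT}(1)$ property.
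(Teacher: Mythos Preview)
Your treatment of the Coxeter case, the dihedral case $I_2(p,m,q)$, and the $A_3(3)$ case matches the paper's argument essentially verbatim, and your inductive reduction of link $\mathrm{CAT}(1)$-ness via Lemma~\ref{lem:linkofcoxcomp} and Corollary~\ref{cor:ThetaSplits} is exactly what the paper uses implicitly when verifying the CCCC link axioms for $A_3(3)$. Two genuine gaps remain.

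First, your handling of $B_n(p,2)$ and $B_3(2,3)$ is not a proof. For $B_n(p,2)$ the paper does not ``analyze how closed geodesics traverse stars of vertices''; it uses the product decomposition $\gamma_n^p = \prod^n \gamma_1^p$ of Lemma~\ref{lem:Bnproduct} to exhibit a simplicial \emph{isometry} between $\widehat\Theta(B_n(p,2))$ and the barycentric subdivision of the $n$-fold spherical join of $p$ points (Lemma~\ref{lem:Bnisom} identifies the cell shapes), and the latter is $\mathrm{CAT}(1)$ as a subdivision of a $\mathrm{CAT}(1)$ space. For $B_3(2,3)$ the paper does not argue via face incidence of the associated polytope at all: it uses $G_{B_3(2,3)} \cong (\mathbb{Z}/2\mathbb{Z}) \times G_{A_3(3)}$ to recognize $\widehat\Theta(B_3(2,3))$ as a metric subdivision of $\widehat\Theta(A_3(3))$ (doubling the $B_3(2,3)$ fundamental simplex along the face for the order-$2$ generator yields the $A_3(3)$ simplex), so $\mathrm{CAT}(1)$ is inherited from the $A_3(3)$ case already established. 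Neither argument fits the framework you sketch, and ``analyze face incidence directly'' does not name a mechanism that would bound geodesic lengths.

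Second, your final claim---that in $\widehat\Theta_{A_4(3)}$ the filling condition fails and short closed geodesics exist---is unsupported and is not asserted in the paper. The exclusion of $A_4(3)$ reflects a limitation of method, not a demonstrated failure of $\mathrm{CAT}(1)$: $\widehat\Theta(A_4(3))$ is $3$-dimensional (the diagram has four vertices), so the $2$-dimensional CCCC criterion of Theorem~\ref{thm:firstCharney} simply does not apply, and no analogue is available. You should not present this as an obstruction to $\mathrm{CAT}(1)$.
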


We note that the case where $\Gamma$ is Coxeter is well known, since $\widehat \Theta_\Gamma$ is the Coxeter complex, which, under the Moussong metric, is isometric to a sphere with its usual round metric. It remains to show the Lemma for non-Coxeter $\Gamma$. Our methods for proving the Lemma are case-specific, and as of yet we are unable to treat the $A_4(3)$ Shephard group. We hope to find a unified presentation in the future, hopefully one that includes $A_4(3)$.

We note here that, assuming the lemma, the proof of Theorem \ref{thm:moussong} follows quickly:

\begin{proof}[Proof of Theorem \ref{thm:moussong} (assuming Lemma \ref{lem:Gcat1})]
    Let $\Gamma$ be an extended Coxeter diagram with no subdiagram of the form $A_4(3)$ and with $\mathcal S^f = \mathcal S^{fs}$. By Proposition \ref{prop:linkdecomp}, the link of a vertex $v_T$ for $T \in \mathcal S^f$ in the local development decomposes as $lk(v_T, F_T) * \widehat \Theta_{\Gamma(T)}$. 
    Suppose $\Gamma(T)$ is connected.
    Since $T$ generates a finite Shephard group and $\Gamma(T)$ is not $A_4(3)$, Lemma \ref{lem:Gcat1} implies $\widehat \Theta_{\Gamma(T)}$ is $\mathrm{CAT}(1)$. If $\Gamma(T)$ is the disjoint union $\Gamma_1 \sqcup \ldots \sqcup \Gamma_n$ of connected subdiagrams of $\Gamma$, then by Corollary \ref{cor:ThetaSplits}, we know that $\widehat \Theta_{\Gamma(T)}$ is isometric to the spherical join $\widehat \Theta_{\Gamma_1} * \ldots * \widehat \Theta_{\Gamma_n}$. By assumption, no $\Gamma_i$ is $A_4(3)$, so each $\widehat \Theta_{\Gamma_n}$ is $\mathrm{CAT}(1)$, and hence so is their spherical join $\widehat \Theta_{\Gamma(T)}$.

    Let $K_0 = |(\mathcal S^f_{> \varnothing})'|$. We identify $K_0$ with a subspace of $K$; namely, it is easy to verify that $lk(v_\varnothing, F_\varnothing) \cong K_0$ as simplicial complexes. We put the subspace metric on $K_0$ coming from this identification. It is also straightforward to verify that $lk(v_T, K_0) \cong lk(v_T, F_T)$. Thus by \cite[Lem. 4.4.1]{charney1995k}, $lk(v_T, F_T)$ is $\mathrm{CAT}(1)$. Therefore the orthogonal join of $\widehat \Theta_{\Gamma(T)}$ and $lk(v_T, F_T)$ is $\mathrm{CAT}(1)$. Since all vertices in the local development are a translate of some $v_T$, this means the local developments are nonpositively curved. Since $\mathcal G$ is a simple complex of groups over a simply connected fundamental domain, it follows that $\mathcal G$ has $\mathrm{CAT}(0)$ development $\Theta$. By the definition of $\mathcal G$, $G$ acts properly (all stabilizers are conjugates of the finite parabolics of $G$) and cocompactly (the fundamental domain is the compact space $K$) on $\Theta$, and hence $G$ is $\mathrm{CAT}(0)$.
\end{proof}

\subsection{The 2-generator Shephard groups}

We begin with the 2-generator, or dihedral, Shephard groups $\Gamma = I_2(p,m,q)$. 
Let $\mathcal P$ be the regular complex polygon associated to $\Gamma$. 
By \cite{sunday1975hypergraphs}, the ``girth'' of $\mathcal P$ 
is $m$. In said article, the polygon (which is notated ``$p\{m\}q$'')  is represented as a ``hypergraph'', and ``girth of the polygon'' means the combinatorial girth of this hypergraph, i.e., the minimal number of edges in a nontrivial closed loop. 
Since our complex $\widehat \Theta$ is the incidence graph of this polytope (and hence the corresponding hypergraph), it follows that the (combinatorial) girth of $\widehat \Theta$ is $2m$. Since the length of an edge is $\pi/m$, it follows that any non-trivial loop in $\widehat \Theta$ has length at least $2\pi$, and hence $\widehat \Theta$ is $\mathrm{CAT}(1)$.

\begin{ex}
    Consider $\Gamma = I_2(p,4,2)$ for any $p \geq 2$. 
    The associated complex $\widehat \Theta_\Gamma$ is isomorphic to the barycentric subdivision of $K_{p,p}$. See \cite[\S 4.8]{1975regular} for more details. Depending on which non-starry regular complex polytope for $\Gamma$ we prefer (since it is not self-dual), the vertices of $K_{p,p}$ come from the vertices (or edges) of the complex polytope for $\Gamma$, and the barycenters of edges of $K_{p,p}$ come from the edges (resp.~vertices) of the polytope. 
    Some diagrams for low values of $p$ are given in Figure \ref{fig:I2p}. 
    Black dots are the vertices (or edges) of the polytope for $\Gamma$, and white dots are edges (resp.~vertices) of the polytope. There is an edge between two dots if the respective faces are nested.
\end{ex}

\begin{figure}[ht]
    \begin{tikzpicture}
    \matrix[matrix of nodes,column sep=-10pt,nodes={anchor=center, minimum height=0.1cm, align=flush center}]{
    \begin{tikzpicture}
        \node at (0,0) {$p=3:$};
    \end{tikzpicture}
    &
    \begin{tikzpicture}

        \coordinate (A1) at (90:2);
        \coordinate (A2) at (210:2);
        \coordinate (A3) at (330:2);

        \coordinate (B1) at (90: 0.75);
        \coordinate (B2) at (210:0.75);
        \coordinate (B3) at (330:0.75);

        \draw (A1) -- (B1);
        \draw (A1) -- (B2);
        \draw (A1) -- (B3);

        \draw (A2) -- (B1);
        \draw (A2) -- (B2);
        \draw (A2) -- (B3);

        \draw (A3) -- (B1);
        \draw (A3) -- (B2);
        \draw (A3) -- (B3);

        \coordinate (A1B1) at ($(A1)!0.5!(B1)$);
        \coordinate (A1B2) at ($(A1)!0.5!(B2)$);
        \coordinate (A1B3) at ($(A1)!0.5!(B3)$);
        \coordinate (A2B1) at ($(A2)!0.5!(B1)$);
        \coordinate (A2B2) at ($(A2)!0.5!(B2)$);
        \coordinate (A2B3) at ($(A2)!0.5!(B3)$);
        \coordinate (A3B1) at ($(A3)!0.5!(B1)$);
        \coordinate (A3B2) at ($(A3)!0.5!(B2)$);
        \coordinate (A3B3) at ($(A3)!0.5!(B3)$);

        \filldraw (A1) circle (0.05cm);
        \filldraw (A2) circle (0.05cm);
        \filldraw (A3) circle (0.05cm);
        \filldraw (B1) circle (0.05cm);
        \filldraw (B2) circle (0.05cm);
        \filldraw (B3) circle (0.05cm);

        \draw[fill=white] (A1B1) circle (0.05cm);
        \draw[fill=white] (A1B2) circle (0.05cm);
        \draw[fill=white] (A1B3) circle (0.05cm);
        \draw[fill=white] (A2B1) circle (0.05cm);
        \draw[fill=white] (A2B2) circle (0.05cm);
        \draw[fill=white] (A2B3) circle (0.05cm);
        \draw[fill=white] (A3B1) circle (0.05cm);
        \draw[fill=white] (A3B2) circle (0.05cm);
        \draw[fill=white] (A3B3) circle (0.05cm);
    \end{tikzpicture}
    &
    \hspace{20pt}
    \begin{tikzpicture}
        \node at (0,0) {$p=4:$};
    \end{tikzpicture}
    &
    \begin{tikzpicture}
        \coordinate (A1) at (45: 2);
        \coordinate (A2) at (135:2);
        \coordinate (A3) at (225:2);
        \coordinate (A4) at (315:2);

        \coordinate (B1) at (0  :1.15);
        \coordinate (B2) at (90 :1.15);
        \coordinate (B3) at (180:1.15);
        \coordinate (B4) at (270:1.15);

        \draw (A1) -- (B1);
        \draw (A1) -- (B2);
        \draw (A1) -- (B3);
        \draw (A1) -- (B4);

        \draw (A2) -- (B1);
        \draw (A2) -- (B2);
        \draw (A2) -- (B3);
        \draw (A2) -- (B4);

        \draw (A3) -- (B1);
        \draw (A3) -- (B2);
        \draw (A3) -- (B3);
        \draw (A3) -- (B4);

        \draw (A4) -- (B1);
        \draw (A4) -- (B2);
        \draw (A4) -- (B3);
        \draw (A4) -- (B4);

        \coordinate (A1B1) at ($(A1)!0.5!(B1)$);
        \coordinate (A1B2) at ($(A1)!0.5!(B2)$);
        \coordinate (A1B3) at ($(A1)!0.5!(B3)$);
        \coordinate (A1B4) at ($(A1)!0.5!(B4)$);
        \coordinate (A2B1) at ($(A2)!0.5!(B1)$);
        \coordinate (A2B2) at ($(A2)!0.5!(B2)$);
        \coordinate (A2B3) at ($(A2)!0.5!(B3)$);
        \coordinate (A2B4) at ($(A2)!0.5!(B4)$);
        \coordinate (A3B1) at ($(A3)!0.5!(B1)$);
        \coordinate (A3B2) at ($(A3)!0.5!(B2)$);
        \coordinate (A3B3) at ($(A3)!0.5!(B3)$);
        \coordinate (A3B4) at ($(A3)!0.5!(B4)$);
        \coordinate (A4B1) at ($(A4)!0.5!(B1)$);
        \coordinate (A4B2) at ($(A4)!0.5!(B2)$);
        \coordinate (A4B3) at ($(A4)!0.5!(B3)$);
        \coordinate (A4B4) at ($(A4)!0.5!(B4)$);

        \filldraw (A1) circle (0.05cm);
        \filldraw (A2) circle (0.05cm);
        \filldraw (A3) circle (0.05cm);
        \filldraw (A4) circle (0.05cm);
        \filldraw (B1) circle (0.05cm);
        \filldraw (B2) circle (0.05cm);
        \filldraw (B3) circle (0.05cm);
        \filldraw (B4) circle (0.05cm);

        \draw[fill=white] (A1B1) circle (0.05cm);
        \draw[fill=white] (A1B2) circle (0.05cm);
        \draw[fill=white] (A1B3) circle (0.05cm);
        \draw[fill=white] (A1B4) circle (0.05cm);
        \draw[fill=white] (A2B1) circle (0.05cm);
        \draw[fill=white] (A2B2) circle (0.05cm);
        \draw[fill=white] (A2B3) circle (0.05cm);
        \draw[fill=white] (A2B4) circle (0.05cm);
        \draw[fill=white] (A3B1) circle (0.05cm);
        \draw[fill=white] (A3B2) circle (0.05cm);
        \draw[fill=white] (A3B3) circle (0.05cm);
        \draw[fill=white] (A3B4) circle (0.05cm);
        \draw[fill=white] (A4B1) circle (0.05cm);
        \draw[fill=white] (A4B2) circle (0.05cm);
        \draw[fill=white] (A4B3) circle (0.05cm);
        \draw[fill=white] (A4B4) circle (0.05cm);
    \end{tikzpicture} \\
    };
    \end{tikzpicture}
    \caption{$\widehat \Theta(I_2(p,4,2))$}
    \label{fig:I2p}
\end{figure}
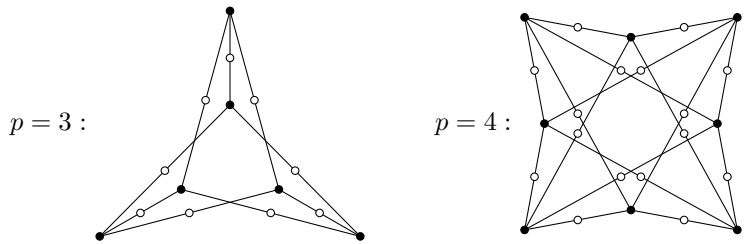

\begin{ex} \label{ex:mobius}
    Let $\Gamma = I_2(3,3,3)$. Then $\widehat \Theta_\Gamma$, shown in Figure \ref{fig:I2333}, is the incidence graph of the \emph{Mobius-Kantor configuration} $8_3$. See \cite[\S 4.8]{1975regular} for more details. The figure given below for $\widehat\Theta_\Gamma$ is from \cite[Fig.~3]{orlik1990milnor}. Here, the black dots represent vertices of this configuration, and the white dots are the lines (or vice versa, since the polytope is self-dual). 
\end{ex}
\begin{figure}[ht]
    \begin{tikzpicture}
    \node[regular polygon,regular polygon sides=8,minimum size=3.8cm, draw] (p) at (0,0) {};

    \coordinate (L1) at ($(p.corner 1)!0.5!(p.corner 2)$);
    \coordinate (L2) at ($(p.corner 2)!0.5!(p.corner 3)$);
    \coordinate (L3) at ($(p.corner 3)!0.5!(p.corner 4)$);
    \coordinate (L4) at ($(p.corner 4)!0.5!(p.corner 5)$);{}
    \coordinate (L5) at ($(p.corner 5)!0.5!(p.corner 6)$);
    \coordinate (L6) at ($(p.corner 6)!0.5!(p.corner 7)$);
    \coordinate (L7) at ($(p.corner 7)!0.5!(p.corner 8)$);
    \coordinate (L8) at ($(p.corner 8)!0.5!(p.corner 1)$);

    \draw (p.corner 1) --
          (p.corner 2) --
          (p.corner 3) --
          (p.corner 4) --
          (p.corner 5) --
          (p.corner 6) --
          (p.corner 7) --
          (p.corner 8) --
          (p.corner 1);

    \draw (p.corner 1) -- (L3);
    \draw (p.corner 2) -- (L4);
    \draw (p.corner 3) -- (L5);
    \draw (p.corner 4) -- (L6);
    \draw (p.corner 5) -- (L7);
    \draw (p.corner 6) -- (L8);
    \draw (p.corner 7) -- (L1);
    \draw (p.corner 8) -- (L2);

    \draw[fill=white] (L1) circle (0.05cm);
    \draw[fill=white] (L2) circle (0.05cm);
    \draw[fill=white] (L3) circle (0.05cm);
    \draw[fill=white] (L4) circle (0.05cm);
    \draw[fill=white] (L5) circle (0.05cm);
    \draw[fill=white] (L6) circle (0.05cm);
    \draw[fill=white] (L7) circle (0.05cm);
    \draw[fill=white] (L8) circle (0.05cm);

    \draw[fill=black] (p.corner 1) circle (0.05cm);
    \draw[fill=black] (p.corner 2) circle (0.05cm);
    \draw[fill=black] (p.corner 3) circle (0.05cm);
    \draw[fill=black] (p.corner 4) circle (0.05cm);
    \draw[fill=black] (p.corner 5) circle (0.05cm);
    \draw[fill=black] (p.corner 6) circle (0.05cm);
    \draw[fill=black] (p.corner 7) circle (0.05cm);
    \draw[fill=black] (p.corner 8) circle (0.05cm);

    \end{tikzpicture}
    \caption{$\widehat \Theta(I_2(3,3,3))$}
    \label{fig:I2333}
\end{figure}
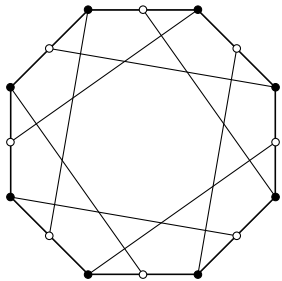

\subsection{The groups \texorpdfstring{$B_n(p,2)$}{B\_n(p)}}

The Shephard groups $G = G_\Gamma$ for $\Gamma = B_n(p,2)$ are unique among the other finite Shephard groups, as they make up the only infinite family of finite non-Coxeter Shephard groups. However, this family becomes easy to deal with because of the following two lemmas.

\begin{lemma} \label{lem:Bnproduct} \emph{\cite[\S 12.2, p.~118]{1975regular}}
    Let $\gamma_n^p$ denote the (unique non-starry) regular complex polytope with symmetry group $G_\Gamma$ for $\Gamma = B_n(p,2)$ (sometimes $\gamma_n^p$ is called a \emph{generalized $n$-cube}). Then, 
    \begin{equation} \label{eq:prodcube}
        \gamma_n^p = \prod^n \gamma_1^p,
    \end{equation}
    where $\prod^n\gamma_1^p$ denotes an $n$-fold direct product of $\gamma_1^p$. 
\end{lemma}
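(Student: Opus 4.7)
The plan is to build the product polytope $\mathcal{P} = \prod^n \gamma_1^p$ explicitly inside $\mathbb{C}^n$, verify it is a regular complex polytope in the sense of the axioms of Section \ref{sec:connectpoly}, compute its symmetry group, and then appeal to the bijection (noted after Theorem \ref{thm:finiteshephard} in the text) between non-starry regular complex polytopes and straight-line extended Coxeter diagrams to conclude it must be $\gamma_n^p$.

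First I would realize $\gamma_1^p$ concretely as the collection $\{\varnothing\} \cup \{p\text{-th roots of unity}\} \cup \{\mathbb{C}\} \subseteq \mathbb{C}$, whose symmetry group (in the sense of unitary maps preserving the set of faces) is generated by the complex reflection $z \mapsto \zeta_p z$, and hence is cyclic of order $p$---matching $G_{B_1(p,2)}$. Then I would form $\mathcal{P}$ using Definition \ref{def:prodpoly}: a face of $\mathcal{P}$ is a product $f_1 \times \cdots \times f_n$ with each $f_i \in \gamma_1^p$, and $\varnothing$ is absorbing. The nonempty faces of $\mathcal{P}$ are thus indexed by $n$-tuples $(f_1, \ldots, f_n)$ where each $f_i$ is either a vertex of $\gamma_1^p$ (giving a ``$0$-direction'' in that coordinate) or the whole $\mathbb{C}$ (giving a ``$1$-direction''), and the dimension of the face is the number of $1$-directions.

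Next I would verify axioms (1)--(5) of a regular complex polytope for $\mathcal{P}$. Axioms (1) and (2) are immediate; the two medial-figure axioms (3) and (4) follow because medial figures in a product decompose as products of medial figures in the factors, and each $\gamma_1^p$ trivially satisfies them (its only non-degenerate medial figure is $\gamma_1^p$ itself, which has $p \geq 2$ vertices). The crucial axiom is (5), simple transitivity of the symmetry group on maximal chains. Here I would exhibit the subgroup of the unitary group preserving $\mathcal{P}$ generated by (i) the diagonal reflections $z \mapsto \zeta_p z$ in each coordinate, and (ii) the coordinate-permutation group $S_n$. This is the standard wreath product $(\mathbb{Z}/p) \wr S_n = G(p,1,n)$, which by Table \ref{tab:shephardNotation} is exactly $G_{B_n(p,2)}$. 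A direct count shows $|G(p,1,n)| = p^n \cdot n!$ equals the number of maximal chains of $\mathcal{P}$ (one chooses an ordering of coordinates and a vertex in each $\gamma_1^p$), and transitivity is clear from the action, so the action is simply transitive.

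The main obstacle is ruling out any strictly larger group of unitary symmetries of $\mathcal{P}$, since it is only after this that we can identify the symmetry group with $G_{B_n(p,2)}$. For this I would argue that any symmetry must permute the $n$ coordinate lines $\mathbb{C} \cdot e_i$ (since these are distinguished as the $1$-faces of $\mathcal{P}$ meeting $\varnothing$ up to the product structure) and restrict to a symmetry of $\gamma_1^p$ on each, forcing it to lie in $(\mathbb{Z}/p) \wr S_n$. Once the symmetry group is correctly identified as $G_{B_n(p,2)}$ and axioms (1)--(5) are verified, the uniqueness half of the correspondence between straight-line diagrams and non-starry regular complex polytopes (modulo duality) forces $\mathcal{P} \cong \gamma_n^p$, giving \eqref{eq:prodcube}.
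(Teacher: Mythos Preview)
The paper does not prove this lemma; it is stated with a bare citation to \cite[\S 12.2, p.~118]{1975regular} and used as a black box. So there is no ``paper's proof'' to compare against, and your proposal supplies an argument where the paper gives none.

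Your outline is essentially correct and is the natural way to establish the result from first principles: build $\prod^n \gamma_1^p$ explicitly, verify the polytope axioms, and identify the symmetry group as the wreath product $G(p,1,n) \cong G_{B_n(p,2)}$. The counting argument for simple transitivity on maximal chains (ordering of coordinates times choice of vertex in each factor giving $n! \cdot p^n$) is clean and suffices for axiom (5).

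Two small gaps you should patch. First, the correspondence you invoke at the end is only a bijection \emph{modulo duality}: a given straight-line diagram determines a non-starry polytope only up to replacing it by its dual. So from the symmetry group alone you get that $\mathcal{P}$ is either $\gamma_n^p$ or the generalized cross-polytope $\beta_n^p$. To pin down $\gamma_n^p$ you must observe one asymmetric feature, e.g.\ that in $\mathcal{P}$ each edge contains $p$ vertices (the order-$p$ generator acts on the vertices of an edge), which is the defining property of $\gamma_n^p$ rather than its dual. Second, you never check that $\mathcal{P}$ is non-starry; this is immediate here (each face is an affine coordinate subspace, and the vertex sets are honest products with no ``winding''), but it should be said, since the uniqueness statement you use applies only to non-starry polytopes.
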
 

\begin{lemma} \label{lem:Bnisom}
    The barycentric subdivision of the $n$-fold spherical join of a point has top dimensional cells isometric to the simplex of shape $B_n$. 
\end{lemma}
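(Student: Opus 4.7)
The plan is to realize $V_n$, the $n$-fold spherical join of a point, as the standard positive octant of $S^{n-1} \subset \mathbb{R}^n$, and then to identify the top-dimensional simplices of its barycentric subdivision with fundamental chambers of the $B_n$ Coxeter group action on $S^{n-1}$.

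First I would embed $V_n$ isometrically as $\{x \in S^{n-1} : x_i \geq 0 \text{ for all } i\}$, with vertices the standard basis vectors $e_1, \dots, e_n$. This is the iterated spherical join description: the join of $n$ points becomes a spherical $(n-1)$-simplex all of whose pairs of vertices lie at distance $\pi/2$. The $k$-face of $V_n$ spanned by $\{e_i : i \in I\}$, for $I \subseteq \{1,\dots,n\}$ with $|I|=k+1$, has barycenter $b_I = |I|^{-1/2} \sum_{i \in I} e_i$.

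Next, a top-dimensional simplex $\tau$ of the barycentric subdivision corresponds to a maximal flag $F_{I_1} \subsetneq \dots \subsetneq F_{I_n}$, equivalently a permutation $\sigma \in S_n$ via $I_k = \{\sigma(1), \dots, \sigma(k)\}$. The coordinate-permutation action of $S_n$ on $V_n$ is isometric, so all such $\tau$ are pairwise isometric; it suffices to treat $\sigma = \mathrm{id}$, for which the vertices of $\tau$ are the points $b_k = k^{-1/2}(1,\dots,1,0,\dots,0)$ (with $k$ leading ones), $k = 1, \dots, n$.

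Finally, I recall that $B_n$ acts on $\mathbb{R}^n$ by signed coordinate permutations, with $\{x : x_1 \geq x_2 \geq \dots \geq x_n \geq 0\}$ a Euclidean fundamental chamber; intersecting with $S^{n-1}$ gives a spherical fundamental simplex whose bounding walls are $\{x_n = 0\}$ together with $\{x_i = x_{i+1}\}$ for $1 \leq i < n$. The vertices of this chamber on $S^{n-1}$ are obtained by omitting one wall at a time and intersecting the remaining $n-1$ walls with $S^{n-1}$, and a direct computation shows these are exactly the points $b_k$ above. Since a spherical simplex contained in an octant is determined by its vertex set, $\tau$ coincides with this fundamental $B_n$ chamber and is therefore isometric to the simplex of shape $B_n$. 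The only substantive check is that the conventions for ``simplex of shape $B_n$'' match — specifically, that the spherical Coxeter chamber of $B_n$ in $S^{n-1}$ is the intended model, which is standard and agrees with the Moussong-metric chambers used elsewhere in the paper.
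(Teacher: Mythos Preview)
Your argument is correct and, if anything, cleaner than the paper's. The paper proves the lemma indirectly via the $n$-cube $Q=[0,1]^n$: it identifies the all-right simplex as $lk(0,Q)$, observes that passing to the barycentric subdivision $Q'$ commutes with taking links, and then compares $lk(0,Q')$ with $lk(v,Q')$ for $v$ the barycenter of the top face, the latter being the $B_n$ Coxeter complex because the full symmetry group of the cube is $W_{B_n}$. Your route stays entirely inside $S^{n-1}$: you realize the all-right simplex as the positive octant, write down the barycenters $b_k$ explicitly, and recognize them as the vertices of the standard $B_n$ chamber $\{x_1\geq\cdots\geq x_n\geq 0\}\cap S^{n-1}$. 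This avoids the detour through the cube and the (easy but unstated) step of matching top cells at the two different links of $Q'$; it also makes the identification with the $B_n$ shape completely explicit. The paper's approach, on the other hand, ties the statement to the cube/Coxeter-block picture that is used elsewhere in defining the Moussong metric, so it is slightly more in keeping with the surrounding machinery.
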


\begin{proof}
    Let $Q = [0,1]^n$ be the $n$-cube with its standard cellulation. The vertex link $lk(0,Q)$ is an ``all-right simplex'' (i.e., a spherical simplex with all edge lengths equal to $\pi/2$). This is the $n$-fold spherical join of one point (see \cite[Def.~I.5.13]{bridson2013metric} for the definition of spherical join). Let $Q'$ be the barycentric subdivision of $Q$ (see \cite[Def.~I.7.42]{bridson2013metric} for the definition of the barycentric subdivision of metrized cells). Then $lk(v,Q')$ is (simplicially) isometric to $lk(v,Q)'$, the latter of which is the barycentric subdivision of the all-right simplex, and hence the barycentric subdivision of the $n$-fold spherical join of a point. 

    Let $v$ be the vertex of $Q'$ coming from the top dimensional face of $Q$. 
    The symmetry group of $Q$ is the type $B_n$ Coxeter group and thus $lk(v,Q')$ is isometric to the Coxeter complex for the $B_n$ Coxeter group. In particular, the top dimensional cells of $lk(v,Q')$ are simplices of shape $B_n$. It follows easily from the definition of the barycentric subdivision that the top dimensional cells of $lk(0,Q')$ are isometric to the top dimensional cells of $lk(v,Q')$; the result follows.
\end{proof}

From these, we can show

\begin{prop}
    For all $n$ and $p$, $\widehat \Theta(B_n(p,2))$ is isometric to the barycentric subdivision of the $n$-fold spherical join of a set of $p$ points, and hence is $\mathrm{CAT}(1)$.
\end{prop}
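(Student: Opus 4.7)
The plan is to leverage the product decomposition $\gamma_n^p = \prod^n \gamma_1^p$ from Lemma \ref{lem:Bnproduct} together with the combinatorial definition of $\widehat \Theta$. By Definition \ref{def:prodpoly}, the non-empty faces of $\gamma_1^p$ are its $p$ vertices together with the top face $\mathbb{C}^1$, so the proper non-empty faces of $\gamma_n^p$ are precisely the tuples $(f_1, \ldots, f_n)$ with each $f_i$ either a vertex of $\gamma_1^p$ or $\mathbb{C}^1$, subject to not all of them being $\mathbb{C}^1$, ordered by coordinate-wise inclusion.

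First I would construct a simplicial isomorphism $\widehat \Theta(B_n(p,2)) \cong Y'$, where $Y = X_1 \ast \cdots \ast X_n$ is the $n$-fold spherical join of $p$ points (so each $X_i$ is a discrete set of $p$ vertices). A non-empty simplex of $Y$ is specified by a non-empty subset $T \subseteq \{1,\ldots,n\}$ together with a choice $x_i \in X_i$ for each $i \in T$. I would send such a simplex to the face $(f_1, \ldots, f_n)$ of $\gamma_n^p$ defined by $f_i = x_i$ for $i \in T$ and $f_i = \mathbb{C}^1$ for $i \notin T$. A direct check shows this is an order-reversing bijection between non-empty simplices of $Y$ (under face inclusion) and proper non-empty faces of $\gamma_n^p$ (under polytope inclusion). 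Since the order complex of a poset coincides with that of its opposite, the induced map identifies $\widehat \Theta(B_n(p,2))$ (the order complex of proper faces of $\gamma_n^p$) with $Y'$ (the order complex of non-empty simplices of $Y$).

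Next I would check this identification is an isometry. The top-dimensional simplices of $Y$ are joins of $n$ single vertices, hence are $(n-1)$-dimensional all-right simplices; by Lemma \ref{lem:Bnisom}, their barycentric subdivisions are built from $B_n$ simplices, which match the top-dimensional cells of $\widehat \Theta(B_n(p,2))$ under the Moussong metric, since the underlying Coxeter diagram is $B_n$. Tracing the vertex-type labels through the bijection shows the combinatorial isomorphism sends top-dim cells to top-dim cells isometrically.

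For the $\mathrm{CAT}(1)$ conclusion, $Y$ is a flag simplicial complex with all edges of length $\pi/2$: a subset of vertices spans a simplex in $Y$ if and only if no two lie in the same $X_i$, which is equivalent to pairwise adjacency in $Y$. Gromov's link condition \cite[Thm.~II.5.18]{bridson2013metric} then implies $Y$ is $\mathrm{CAT}(1)$, and since $Y$ is isometric to its barycentric subdivision $Y'$ as a metric space, we conclude $\widehat \Theta(B_n(p,2)) \cong Y'$ is $\mathrm{CAT}(1)$. The main obstacle I expect is the careful verification that the combinatorial correspondence between chains of polytope faces and chains of join simplices is compatible with the Moussong $B_n$-simplex metric on each top-dimensional cell; the remainder of the argument is essentially combinatorial.
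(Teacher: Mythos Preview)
Your proposal is correct and follows essentially the same approach as the paper. The paper phrases the simplicial isomorphism via abstract poset manipulations (showing $c(\bigast^n (\gamma_1^p)_{prop}) \cong \prod^n c(\gamma_1^p)_{prop} \cong c(\gamma_n^p)_{prop}$ and then stripping cone points), whereas you write down the bijection explicitly and observe it is order-reversing; since the order complex of a poset agrees with that of its opposite, these yield the same simplicial identification. For the $\mathrm{CAT}(1)$ step you invoke Gromov's flag criterion on the all-right complex $Y$, while the paper uses that a spherical join of $\mathrm{CAT}(1)$ spaces is $\mathrm{CAT}(1)$; both are valid and equally short, and the passage from $Y$ to $Y'$ is handled identically (barycentric subdivision does not change the underlying metric space). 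Your caveat about checking that vertex types match under the bijection so that the $B_n$-simplex shapes line up is exactly the point the paper leaves implicit when it says ``hence, this map is a simplicial isometry''; a quick check (e.g.\ in the $B_3$ case, comparing edge lengths in the link of a cube vertex with those in the $B_3$ Coxeter simplex) confirms that your order-reversing map sends the barycenter of a $k$-simplex of $Y$ to a vertex of type $\hat s_{k+1}$, which is precisely what is needed.
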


\begin{proof}
    First, we recall basic constructions relating to posets. The (upper) cone $c \mathcal P$ on a poset $\mathcal P$ is the poset obtained by adding an element $1_{\mathcal P}$ (called the \emph{cone point}) to $\mathcal P$ which we declare maximal in the order on $\mathcal P$. The derived complex $\mathcal P'$ is the set of all linearly ordered subsets (or ``chains'') of $\mathcal P$, ordered by inclusion. The derived complex $\mathcal P'$ is always an abstract simplicial complex, and we denote its geometric realization by $|\mathcal P'|$. 
    The product $\mathcal P \times \mathcal Q$ is the poset whose underlying set is usual set-theoretic product $\mathcal P \times \mathcal Q$ with order $(p,q) \leq (p',q')$ if and only if $p \leq p'$ and $q \leq q'$. 
    The join $\mathcal P * \mathcal Q$ is the poset $(c\mathcal P \times c \mathcal Q) \setminus \{(1_{\mathcal P}, 1_{\mathcal Q})\}$.
    We note that there is an order-preserving isomorphism $c(\mathcal P * \mathcal Q) \cong c \mathcal P \times c \mathcal Q$ which maps the cone point of $c(\mathcal P * \mathcal Q)$ to $(1_{\mathcal P}, 1_{\mathcal Q})$.

    We now turn to the setting of $B_n(p,2)$. Let $(\gamma_n^p)_{prop}$ denote the poset of proper faces of $\gamma_n^p$ (i.e., all faces except $\varnothing$ and $\mathbb{C}^n$), and let $(\gamma_n^p)_{>\varnothing}$ denote the set of non-empty faces of $\gamma_n^p$ (but including $\mathbb{C}^n$). 
    The poset $(\gamma_n^p)_{>\varnothing}$ is the cone on $(\gamma_n^p)_{prop}$ with cone point $\mathbb{C}^n$.
    By the comment after Definition \ref{def:prodpoly}, there is a poset isomorphism
    \begin{align*}
        \prod^n (\gamma_1^p)_{>\varnothing} \cong \left( \prod^n \gamma_1^p  \right)_{>\varnothing},
    \end{align*}
    where the product on the left hand side is the poset product, and that on the right is the polytope product. Thus by Lemma \ref{lem:Bnproduct} we have 
    \begin{align*}
        \prod^n  c(\gamma_1^p)_{prop} = \prod^n (\gamma_1^p)_{>\varnothing} \cong (\gamma_n^p)_{>\varnothing} = c (\gamma_n^p)_{prop}.
    \end{align*}
    And by our previous comments about products and joins of posets, we see that this gives the poset isomorphism
    \begin{align*}
        c\left(\bigast^n (\gamma_1^p)_{prop}\right) \cong  c (\gamma_n^p)_{prop},
    \end{align*}
    where $\bigast^n$ is the $n$-fold spherical join.
    In particular, the cone points are preserved in this map, so
    \begin{align*}
        \bigast^n (\gamma_1^p)_{prop} \cong   (\gamma_n^p)_{prop}.
    \end{align*}
    Thus by taking geometric realizations, we have a simplicial homeomorphism
    \begin{align*}
        \big|\big(\bigast^n (\gamma_1^p)_{prop}\big)'\big| \cong  | (\gamma_n^p)_{prop}'|.
    \end{align*}
    By our definitions, $\widehat \Theta \cong | (\gamma_n^p)_{prop}'|$. Moreover, the diagram $B_1(p,2)$ is a single vertex labeled $p$; its regular complex polytope $\gamma_1^p$ is a copy of $\mathbb{C}$ with $p$ distingished points as vertices, so the poset $(\gamma_1^p)_{prop}$ is a set of $p$ points, none of which are comparable.
    Thus $\bigast^n (\gamma_1^p)_{prop}$ is the poset of cells of the $n$-fold spherical join of a set of $p$ points, which we will call $\Delta$, and $\big(\bigast^n (\gamma_1^p)_{prop}\big)'$ is the poset of cells of the barycentric subdivision $\Delta'$ of $\Delta$. In other words, there is a simplicial homeomorphism
    \begin{align*}
        \Delta' \cong \widehat \Theta(B_n(p,2)).
    \end{align*}
    The Moussong metric declares the top dimensional simplices of $\widehat \Theta(B_n(p,2))$ to be simplices of type $B_n$. Lemma \ref{lem:Bnisom} implies that the top dimensional cells of $\Delta'$ are also simplices of type $B_n$; hence, this map is a simplicial isometry. Since $\Delta$ is $\mathrm{CAT}(1)$ (it is a spherical join of $\mathrm{CAT}(1)$ spaces), then $\Delta'$ is $\mathrm{CAT}(1)$ \cite[Lem.~I.7.48]{bridson2013metric}, and thus so is $\widehat \Theta(B_n(p,2))$.
\end{proof}

\subsection{The group \texorpdfstring{$A_3(3)$}{A\_3(3)}}

We now consider $\widehat \Theta = \widehat \Theta(A_3(3))$. 
Throughout this section, let $\mathcal H$ denote the regular complex polytope associated to $A_3(3)$. Our arguments rely on the combinatorial structure of this polytope---see Appendix \ref{sec:hessiancomb} for more details. 

Here we utilize Charney's combinatorial $\mathrm{CAT}(1)$ criteria (CCCC) that we introduced in Section \ref{sec:cat1crit}.
Note that $\widehat \Theta$ carries a natural marked $A_3$ simplicial complex structure: we call a vertex of $\widehat \Theta$ type $\hat a$ if it is a vertex of $\mathcal H$, type $\hat b$ if it is an edge of $\mathcal H$, and type $\hat c$ if it is a face of $\mathcal H$. 
We also note that the canonical metric for marked $A_3$ simplicial complexes agrees with the Moussong metric on $\widehat \Theta$; both place an angle of $\pi/3$ at the vertices of type $\hat a$ and $\hat c$, and an angle of $\pi/2$ at the vertices of type $\hat b$.

\begin{prop}
    Under the above marking, $\widehat \Theta(A_3(3))$ satisfies \tempname, and is therefore $\mathrm{CAT}(1)$ under its canonical metric (and hence the Moussong metric).
\end{prop}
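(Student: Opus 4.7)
The plan is to verify each of the three conditions of Charney's combinatorial $\mathrm{CAT}(1)$ criteria (Definition \ref{def:cccc}) and then invoke Theorem \ref{thm:charneycat1}. First I would handle condition (\ref{item:largeaclink}) on the girth of links at vertices of type $\hat a$ and $\hat c$. By Lemma \ref{lem:linkofcoxcomp}, the link of a vertex of type $\hat a$ (resp.\ $\hat c$) in $\widehat \Theta(A_3(3))$ is isometric to $\widehat \Theta(I_2(3,3,3))$, the Shephard complex of the subdiagram obtained by deleting the leftmost (resp.\ rightmost) vertex of $A_3(3)$. By Example \ref{ex:mobius} this is the incidence graph of the Mobius-Kantor configuration $8_3$, a bipartite graph of girth $6$, so (\ref{item:largeaclink}) holds.

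Next I would handle condition (\ref{item:developB}). Deleting the middle vertex of $A_3(3)$ yields the disjoint union of two copies of the single-vertex diagram with label $3$. By Lemma \ref{lem:linkofcoxcomp} combined with Corollary \ref{cor:ThetaSplits}, the link of a vertex of type $\hat b$ decomposes as the spherical join of $\widehat \Theta$ of each factor. The regular complex polytope for a single vertex labeled $3$ is a copy of $\mathbb{C}$ equipped with three $\mathbb{Z}/3$-translated distinguished points, so its $\widehat\Theta$ is a discrete set of three points; the self-join is thus $K_{3,3}$, which is complete bipartite and contains an embedded $4$-cycle.

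The main obstacle is condition (\ref{item:fillloop}). Each short closed edge path in Figure \ref{fig:shortclosededge} translates into a combinatorial incidence configuration in the Hessian polytope $\mathcal H$: a $4$-cycle of type (i) corresponds to a pair of vertices $V_1,V_2$ of $\mathcal H$ lying on a common pair of $2$-faces $F_1,F_2$; a $6$-cycle of type (ii.a) corresponds to a closed triangular chain of vertices and edges of $\mathcal H$; and a $6$-cycle of type (ii.b) corresponds to its dual, a closed triangular chain of $2$-faces and edges. Producing the filled subcomplexes in Figure \ref{fig:fillededge} therefore reduces to verifying, respectively, that (i) two $2$-faces of $\mathcal H$ sharing two vertices share an edge through those vertices, (ii.a) any triangular cycle of vertices/edges of $\mathcal H$ is contained in a common $2$-face, and (ii.b) dually, any triangular cycle of $2$-faces/edges meets in a common vertex. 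These are intrinsic combinatorial properties of $\mathcal H$, which I would verify by direct inspection using the explicit data compiled in Appendix \ref{sec:hessiancomb}. Once conditions (\ref{item:largeaclink})--(\ref{item:fillloop}) are all established, Theorem \ref{thm:charneycat1} immediately yields that $\widehat\Theta(A_3(3))$ is $\mathrm{CAT}(1)$ under its canonical metric, which, as noted above, coincides with the Moussong metric.
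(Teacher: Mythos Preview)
Your proposal is correct and follows essentially the same route as the paper's proof: both verify conditions (\ref{item:largeaclink}) and (\ref{item:developB}) by identifying the vertex links via Lemma \ref{lem:linkofcoxcomp} and Corollary \ref{cor:ThetaSplits} as $\widehat\Theta(I_2(3,3,3))$ (girth $6$) and $K_{3,3}$ respectively, and both reduce condition (\ref{item:fillloop}) to the same three combinatorial facts about the Hessian polytope $\mathcal H$ (Propositions \ref{prop:faceintersectsimple} and \ref{prop:threeedgesintersect}, with the (ii.b) case handled by self-duality), deferring their verification to the explicit data in Appendix \ref{sec:hessiancomb}.
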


\begin{proof}

The links of vertices of type $\hat a$ and $\hat c$ are isomorphic to $\widehat \Theta(I_2(3,3,3))$ which has girth $6$, and the link of a vertex of type $\hat b$ is isomorphic to $\widehat \Theta(I_2(3,2,3))$, which by Corollary \ref{cor:ThetaSplits} is isomorphic to $K_{3,3}$. 

It remains to show that $\widehat \Theta$ satisfies 
condition (\ref{item:fillloop}) of the criteria.

\underline{Path (i)}: Translating to $\mathcal H$, this setup is equivalent to taking two faces of $\mathcal H$ (corresponding to the vertices of type $\hat c$) which intersect in two distinct vertices of $\mathcal H$ (the vertices of type $\hat a$). By Proposition \ref{prop:faceintersectsimple}, these faces must intersect along an edge containing these two vertices. This edge in the polytope corresponds to a vertex of type $\hat b$ which is joined to each vertex of $\gamma$, and each of the triangles formed are filled as in Figure \ref{fig:fillededge}(i).

\underline{Path (ii)}: 
Path (ii.a) says we have three edges in the complex polytope (vertices of type $\hat b$) which pairwise intersect at three distinct vertices of the polytope (vertices of type $\hat a$). 
By Proposition \ref{prop:threeedgesintersect}, these edges must be contained in some common face (a vertex of type $\hat c$), giving Figure \ref{fig:fillededge}(ii.a). The (ii.b) case follows from passing to the dual polytope.

Thus we conclude that $\widehat \Theta(A_3(3))$ satisfies CCCC, and is therefore $\mathrm{CAT}(1)$.
\end{proof}

\subsection{The group \texorpdfstring{$B_3(2,3)$}{B\_3(3,3)}}

By \cite[Sec. 12.4]{1975regular}, we know $G_{B_3(2,3)} \cong (\mathbb{Z}/{2}\mathbb{Z}) \times G_{A_3(3)}$. This manifests on the level of complexes; $\widehat \Theta(B_3(2,3))$ is a subdivision of $\widehat \Theta(A_3(3))$. Specifically, doubling the fundamental domain for $\widehat \Theta(B_3(2,3))$ along the face corresponding to the order-2 generator is isometric to the fundamental domain for $\widehat \Theta(A_3(3))$. 
This is illustrated in Figure \ref{fig:B3subdivA3}, where the fundamental domain for $B_3(2,3)$ is outlined in bold, with the corresponding stabilizers labeling the edges. The vertices of $\Delta_{A_3(3)}$ are labeled according to the convention used in the previous section. 
Since this subdivision respects the metric on $\widehat \Theta(A_3(3))$, it follows that $\widehat \Theta(B_3(2,3))$ is $\mathrm{CAT}(1)$.

\begin{figure}[!ht]
    \begin{tikzpicture}
        \coordinate (A) at (0,0);
        \coordinate (B) at (-3,3);
        \coordinate (C) at (3,3);
        \coordinate (D) at (0,4.2);

        \draw[fill=gray!25] (A) -- (D) to[out=0,in=135]  (C) -- (A);

        \draw[line width=0.05cm,fill=gray!25] (A) -- (B) to[out=45,in=180] (D) -- (A);

        \filldraw (A) circle (0.05cm);
        \filldraw (B) circle (0.05cm);
        \filldraw (C) circle (0.05cm);
        \filldraw (D) circle (0.05cm);

        \node at (-1.1,2.5) {$\Delta_{B_3(2,3)}$};

        \node[below=0.1cm] at (A) {$\hat b$};
        \node[left=0.1cm] at (B) {${\hat a}$};
        \node[right=0.1cm] at (C) {$\hat c$};

        \node at (0.25,0.65) {$\frac{\pi}{4}$};
        \node at (-0.25,0.65) {$\frac{\pi}{4}$};
        
        \node[right=0.15cm] at (B) {$\frac{\pi}{3}$};
        \node[left=0.15cm] at (C) {$\frac{\pi}{3}$};

        \node[below left] at (D) {$\frac{\pi}{2}$};
        \node[below right] at (D) {$\frac{\pi}{2}$};

        \node[right] at (0,2.2) {$\mathbb{Z}/2\mathbb{Z}$};
        \node[below left] at (-1.5,1.5) {$\mathbb{Z}/3\mathbb{Z}$};
        \node[above left] at (-1.3,3.9) {$\mathbb{Z}/3\mathbb{Z}$};
    \end{tikzpicture}
    \caption{$\Delta_{B_3(2,3)}$ as a subspace of $\Delta_{A_3(3)}$}
    \label{fig:B3subdivA3}
\end{figure}
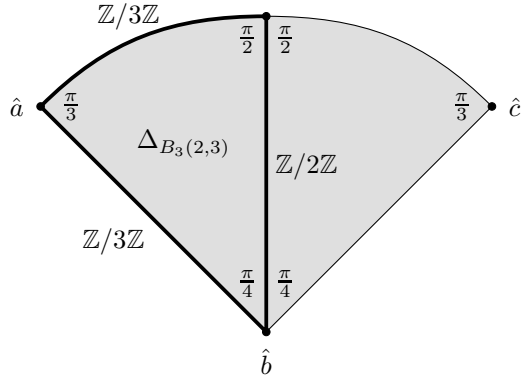

This exhausts the list of finite Shephard groups with connected diagram (excluding $A_4(3)$), thus completing the proof of Lemma \ref{lem:Gcat1}, and by extension, Theorem \ref{thm:moussong}.

\begin{appendix}

\section{Combinatorics of the Hessian polyhedron}
\label{sec:hessiancomb}

In this section we provide explanations for many technical lemmas regarding the $A_3(3)$ Shephard group and lay out explicit data utilized in the computations  therein. Throughout, let $\mathcal H$ denote the standard Hessian polyhedron, that is, the non-starry regular complex polytope with diagram $A_3(3)$. We begin by recalling basic facts about the facets of $\mathcal H$. The following can be found in \cite[Ch.~12.3]{1975regular}.

The 27 vertices of $\mathcal H$ are given by 
\[
    (0, \omega^i, -\omega^j) \qquad (-\omega^j, 0, \omega^i) \qquad (\omega^i, -\omega^j, 0)
\]
where $\omega = \exp(2\pi \sqrt{-1}/3)$ and $i,j = 1,2,3$. As in Coxeter, we find it convenient to use the shorthand
\[
    0ij \qquad j0i \qquad ij0
\]
for $i,j =1,2,3$ to represent the respective vertices. (For example, $013$ is the vertex $(0, \omega^1, -\omega^3)$.) The generators of the $A_3(3)$ Shephard group have a nice description as permutations of these vertices, which is listed in Table \ref{tab:permuteA3}.

\begin{table}[ht] \small
\begin{align*}
    a &= (101\ 201\ 301)(102\ 202\ 302)(103\ 203\ 303)(110\ 210\ 310)(120\ 220\ 320)(130\ 230\ 330) \\[0.45em]
    b &=  (012\ 230\ 103) (013\ 102\ 320) (021\ 203\ 130)(023\ 310\ 201)(031\ 120\ 302)(032\ 301\ 210) \\[0.45em]
    c &= (011\ 012\ 013)(021\ 022\ 023)(031\ 032\ 033)(101\ 102\ 103)(201\ 202\ 203)(301\ 302\ 303).
\end{align*}
\caption{Permutation representation of $A_3(3)$}
\label{tab:permuteA3}
\end{table}

There are 72 edges of $\mathcal H$. Two vertices lay on a common edge if and only if their symbols agree in an even number of positions, meaning they agree in two positions or no position. For example, $013$ and $023$ share an edge, while $013$ and $022$ do not. Thus it is straightforward to verify if two given points share an edge in $\mathcal H$ despite the (somewhat) large number of edges. In addition, since ``edges'' of a complex polytope are (affine) complex lines, and two points determine a unique line, two vertices share \emph{at most} one edge. 

We next describe the (2-)faces. Since $\mathcal H$ is self-dual, it has 27 faces. One of the faces has vertices
\[
    012, 021, 103, 203, 303, 130, 230, 330,
\]
which is the orbit of $012$ under the subgroup generated by $a$ and $b$.
The other faces are the translates of this face by elements of the symmetry group. We provide the vertices of these faces here for posterity (including the face mentioned above) in Table \ref{tab:hessianFaces}.
\begin{table}[ht]
\begin{align*}
 1:    \ &  0 1 2 \ 0 2 1 \ 1 0 3 \ 2 0 3 \ 3 0 3 \ 1 3 0 \ 2 3 0 \ 3 3 0 &
 2:    \ &  3 0 2 \ 0 2 2 \ 0 2 3 \ 1 1 0 \ 3 0 1 \ 3 0 3 \ 2 3 0 \ 0 2 1 \\
 3:    \ &  2 0 2 \ 0 2 2 \ 3 1 0 \ 2 0 1 \ 0 2 3 \ 1 3 0 \ 0 2 1 \ 2 0 3 &
 4:    \ &  3 2 0 \ 0 2 2 \ 1 0 1 \ 3 1 0 \ 0 3 2 \ 2 0 3 \ 0 1 2 \ 3 3 0 \\
 5:    \ &  1 0 2 \ 0 2 2 \ 1 0 1 \ 0 2 3 \ 2 1 0 \ 0 2 1 \ 1 0 3 \ 3 3 0 &
 6:    \ &  0 1 1 \ 0 2 3 \ 1 0 2 \ 2 0 2 \ 3 0 2 \ 1 3 0 \ 2 3 0 \ 3 3 0 \\
 7:    \ &  2 2 0 \ 0 2 2 \ 2 1 0 \ 0 3 2 \ 3 0 1 \ 0 1 2 \ 2 3 0 \ 1 0 3 &
 8:    \ &  1 2 0 \ 0 2 2 \ 0 3 2 \ 2 0 1 \ 1 1 0 \ 1 3 0 \ 3 0 3 \ 0 1 2 \\
 9:    \ &  0 3 1 \ 0 2 2 \ 3 1 0 \ 1 1 0 \ 2 1 0 \ 3 0 3 \ 1 0 3 \ 2 0 3 &
 10:   \ &  0 1 1 \ 3 1 0 \ 3 2 0 \ 2 0 2 \ 0 3 1 \ 0 2 1 \ 1 0 3 \ 3 3 0 \\
 11:   \ &  3 2 0 \ 0 2 3 \ 1 0 2 \ 3 1 0 \ 0 3 3 \ 2 0 1 \ 0 1 3 \ 3 3 0 &
 12:   \ &  2 2 0 \ 0 2 3 \ 2 1 0 \ 0 3 3 \ 3 0 2 \ 0 1 3 \ 2 3 0 \ 1 0 1 \\
 13:   \ &  1 2 0 \ 0 2 3 \ 0 3 3 \ 2 0 2 \ 1 1 0 \ 1 3 0 \ 3 0 1 \ 0 1 3 &
 14:   \ &  0 3 2 \ 0 2 3 \ 3 1 0 \ 1 1 0 \ 2 1 0 \ 3 0 1 \ 1 0 1 \ 2 0 1 \\
 15:   \ &  2 2 0 \ 0 2 1 \ 2 1 0 \ 0 3 1 \ 3 0 3 \ 0 1 1 \ 2 3 0 \ 1 0 2 &
 16:   \ &  1 2 0 \ 0 2 1 \ 0 3 1 \ 2 0 3 \ 1 1 0 \ 1 3 0 \ 3 0 2 \ 0 1 1 \\
 17:   \ &  0 3 3 \ 0 2 1 \ 3 1 0 \ 1 1 0 \ 2 1 0 \ 3 0 2 \ 1 0 2 \ 2 0 2 &
 18:   \ &  2 2 0 \ 3 1 0 \ 0 3 2 \ 0 3 3 \ 0 3 1 \ 1 0 2 \ 1 0 3 \ 1 0 1 \\
 19:   \ &  2 2 0 \ 2 0 3 \ 0 3 2 \ 1 2 0 \ 3 0 3 \ 0 1 1 \ 1 0 3 \ 3 2 0 &
 20:   \ &  0 3 3 \ 2 0 3 \ 2 0 1 \ 1 1 0 \ 0 3 2 \ 0 3 1 \ 3 2 0 \ 2 0 2 \\
 21:   \ &  2 2 0 \ 2 0 1 \ 3 0 1 \ 0 3 3 \ 1 2 0 \ 3 2 0 \ 0 1 2 \ 1 0 1 &
 22:   \ &  2 2 0 \ 1 3 0 \ 3 0 1 \ 3 0 2 \ 3 0 3 \ 0 1 1 \ 0 1 2 \ 0 1 3 \\
 23:   \ &  0 3 3 \ 3 0 3 \ 3 0 1 \ 2 1 0 \ 0 3 2 \ 0 3 1 \ 1 2 0 \ 3 0 2 &
 24:   \ &  3 2 0 \ 2 3 0 \ 1 0 1 \ 1 0 2 \ 1 0 3 \ 0 1 1 \ 0 1 2 \ 0 1 3 \\
 25:   \ &  2 2 0 \ 2 0 2 \ 3 0 2 \ 0 3 1 \ 1 2 0 \ 3 2 0 \ 0 1 3 \ 1 0 2 &
 26:   \ &  1 2 0 \ 3 3 0 \ 2 0 1 \ 2 0 2 \ 2 0 3 \ 0 1 1 \ 0 1 2 \ 0 1 3 \\
 27:   \ &  0 1 3 \ 0 2 2 \ 1 0 1 \ 2 0 1 \ 3 0 1 \ 1 3 0 \ 2 3 0 \ 3 3 0 &
\end{align*}
\caption{The 27 faces of $\mathcal H$}
\label{tab:hessianFaces}
\end{table}
We note that the list of vertices is sufficient to determine the entire face; an edge is included in a face if and only if each of its vertices are. 
That is to say that if three vertices in a face span an edge of $\mathcal H$, then this edge is part of said face.

We conclude with two further observations which can be derived from examining Table \ref{tab:hessianFaces}. First,

\begin{prop} \label{prop:threeedgesintersect}
    If $E_1$, $E_2$, and $E_3$ are distinct edges of $\mathcal H$ which pairwise intersect non-trivially but have trivial total intersection, then there is a face $F$ of $\mathcal H$ containing $E_1$, $E_2$, and $E_3$.
\end{prop}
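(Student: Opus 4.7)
The plan is to exploit the transitive action of the Shephard group $G = G_{A_3(3)}$ on edges of $\mathcal H$ to reduce the problem to a single configuration, and then to verify it by direct inspection of the face data in Table~\ref{tab:hessianFaces}.

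Two distinct edges of $\mathcal H$ are affine complex lines, so they meet in at most one vertex. Thus the hypothesis produces three distinct vertices $v_1, v_2, v_3$ with $v_2, v_3 \in E_1$, $v_1, v_3 \in E_2$, $v_1, v_2 \in E_3$, and no vertex common to all three. I would first reduce the statement to showing that $v_1, v_2, v_3$ lie in a common face $F$: once this is done, each edge $E_i$ is the unique complex line through a pair of vertices of $F$, hence lies in the 2-dimensional affine span of $F$, and so (being an edge of $\mathcal H$) is automatically an edge of $F$.

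For the symmetry reduction, $G$ acts transitively on edges (since it acts simply transitively on flags), so I may assume $E_1 = \{011, 012, 013\}$. The stabilizer of $E_1$ contains the generator $c$, which cyclically rotates $011 \to 012 \to 013$, and thus acts transitively on the three unordered pairs of vertices of $E_1$. Since the proposition is symmetric in $v_2, v_3$, I may further fix $\{v_2, v_3\} = \{011, 012\}$; the triviality of the triple intersection becomes $v_1 \neq 013$.

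Now the combinatorial edge criterion (two vertices of $\mathcal H$ lie on a common edge iff their symbols agree in an even number of positions) makes it a routine case-check to enumerate the common neighbors of $011$ and $012$: there are exactly nine such vertices other than $013$, namely
\[
    \{103,\ 203,\ 303,\ 120,\ 130,\ 220,\ 230,\ 320,\ 330\}.
\]
These are the only possible values of $v_1$. Finally, one reads off from Table~\ref{tab:hessianFaces} that $E_1$ is contained in exactly three faces of $\mathcal H$ (faces 22, 24, and 26), each of which contains exactly three of the nine candidates for $v_1$, and these three triples partition the list. Hence for every admissible $v_1$ the triple $v_1, v_2, v_3$ lies in a common face, completing the proof.

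The main obstacle is purely bookkeeping: the argument reduces to a finite case check, whose execution is straightforward given the explicit face list but would be impractical without the symmetry reduction. A more intrinsic proof exploiting the action of $G_{25}$ or the self-duality of $\mathcal H$ would be desirable, but does not seem necessary for this particular statement.
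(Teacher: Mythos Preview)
Your proposal is correct and follows essentially the same approach as the paper, which simply asserts that the statement ``may be proven by hand (or more conveniently, by computer)'' from Table~\ref{tab:hessianFaces} and the edge criterion; you carry this out explicitly, using the transitivity of $G$ on edges and of the edge stabilizer on pairs of vertices to reduce the brute-force check to nine cases, all of which you verify correctly against the table.

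One small point deserves more care: your reduction step asserts that an edge of $\mathcal H$ lying in the affine span of a face $F$ is ``automatically'' an edge of $F$. This is true for $\mathcal H$, but it is not justified by the affine-span argument alone, and the appendix only records the weaker statement that an edge lies in $F$ if and only if \emph{all three} of its vertices do. A cleaner justification is the following count: each face is a $3\{3\}3$ polygon with $8$ vertices and $8$ edges, so exactly $8\cdot\binom{3}{2}=24$ of the $\binom{8}{2}=28$ vertex pairs in $F$ lie on a common edge of $F$, and one checks (for a single face, then by transitivity) that the remaining $4$ pairs share no edge of $\mathcal H$ at all. Hence any two vertices of $F$ that span an edge of $\mathcal H$ already span an edge of $F$, which is exactly what your reduction needs.
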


This essentially says there are no ``empty triangles'' in $\mathcal H$. The following says that the intersection of faces is well behaved in a certain way.

\begin{prop} \label{prop:faceintersectsimple}
    If $F_1$ and $F_2$ are distinct faces of $\mathcal H$, then $F_1 \cap F_2$ is either empty, a single vertex, or a single edge.
\end{prop}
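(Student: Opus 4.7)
The plan is to reduce to a case analysis using the symmetry of $\mathcal H$, and then verify the claim by direct inspection of Table~\ref{tab:hessianFaces}. Because the symmetry group $G_{A_3(3)}$ acts simply transitively on the maximal chains of $\mathcal H$ (and therefore transitively on the 27 2-faces), it suffices to fix one specific face $F$---say Face~1 from Table~\ref{tab:hessianFaces}---and verify the conclusion for every pair $(F, F')$ with $F'$ ranging over the remaining 26 faces.

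The enumeration itself is as follows. For each $F'$, I would compute the shared vertex set $F \cap F'$ by comparing vertex lists in Table~\ref{tab:hessianFaces}. The critical combinatorial observation is that $|F \cap F'|$ always takes one of the values $0$, $1$, or $3$---never $2$ and never larger than $3$. Explicitly, among the 26 faces $F' \ne F$, exactly 2 are disjoint from $F$, exactly 8 share one vertex with $F$, and exactly 16 share three vertices. The 16 three-vertex intersections pair up over the 8 edges of $F$ (two other faces per edge), matching the standard fact that each edge of the Hessian polyhedron lies in exactly 3 of its 2-faces (a consequence of the edge-face incidence count $72 \times 3 = 27 \times 8 = 216$). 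For each three-vertex intersection, applying the incidence criterion stated at the beginning of the appendix (two vertices lie on a common edge iff their symbols agree in an even number of positions) shows that the three shared vertices are pairwise edge-adjacent; since a complex line in $\mathbb{C}^3$ is determined by two of its points, these three vertices all lie on a single edge of $\mathcal H$. Because $F$ and $F'$ are 2-complex-dimensional affine subspaces of $\mathbb{C}^3$ containing all three vertices, this edge is contained in both subspaces and is a common sub-edge in the polytope structure.

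The main obstacle is the tedium of enumerating 26 pairs. This can be substantially reduced by exploiting the action of $\mathrm{Stab}(F) \cong G_{I_2(3,3,3)}$ (of order $24$) on the 26 remaining faces, which partitions them into a small number of orbits so that only one representative per orbit needs to be checked explicitly. As a sanity check, the counts $2 + 8 + 16 = 26$ and $8 \times 2 = 16$ are mutually consistent and agree with the known incidence numbers of the Hessian polyhedron, so any miscounted pair would be flagged immediately by these totals.
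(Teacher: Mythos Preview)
Your proposal is correct and is essentially the same approach as the paper's: the paper offers no argument beyond ``both of these may be proven `by hand' (or more conveniently, by computer) using Table~\ref{tab:hessianFaces} and the previously stated characterization of the edges of $\mathcal H$.'' You have simply organized that brute-force check more carefully, and your specific counts ($2$ disjoint, $8$ single-vertex, $16$ three-vertex intersections with Face~1) are exactly right, as is the pairing of the $16$ three-vertex cases into the $8$ edges of Face~1.
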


Again, both of these may be proven ``by hand'' (or more conveniently, by computer) using Table \ref{tab:hessianFaces} and the previously stated characterization of the edges of $\mathcal H$.

\end{appendix}

\bibliographystyle{alpha}
\bibliography{cat0shep}

\end{document}